\documentclass[11pt]{amsart}
\usepackage{amssymb}

\newcommand{\R}{{\mathbb R}}

\newcommand{\al}{\alpha}

\newcommand{\vp}{\varphi}
\newcommand{\w}{\mathbf{W}_{1,p}}
\newcommand{\wa}{\mathbf{W}_{\al,p}}
\newcommand{\M}{M^{+}(\R^n)}
\begin{document}

\title[Nonlinear Elliptic Equations and Potentials of Wolff Type]{Nonlinear Elliptic Equations\\ and Intrinsic Potentials of Wolff Type}
\author{Cao Tien Dat}
\author{ Igor Verbitsky}

\keywords{Nonlinear elliptic equations, sublinear problems,  integral inequalities,  Wolff potentials, $p$-Laplacian, fractional Laplacian, $k$-Hessian}
\subjclass[2010]{Primary 35J92, 42B37. Secondary 35B09}

\thanks{Supported in part by
NSF grant DMS-1161622.}


\newtheorem{theorem}{Theorem}[section]
\newtheorem{lemma}[theorem]{Lemma}
\numberwithin{equation}{section}
\newtheorem{Remark}[theorem]{Remark}
\newtheorem{cor}[theorem]{Corollary}
\newtheorem{Corollary}[theorem]{Corollary}
\newtheorem{prop}[theorem]{Proposition}
\newtheorem{Definition}[theorem]{Definition}

\begin{abstract} 
We give necessary and sufficient conditions for the existence of weak  solutions   
to the model  equation 
$$-\Delta_p u=\sigma \, u^q \quad \text{on} \, \, \, \R^n,$$ 
in the  case  $0<q<p-1$, where $\sigma\ge 0$ is an arbitrary locally integrable function,  or measure, and $\Delta_pu={\rm div}(\nabla u|\nabla u|^{p-2})$ is the $p$-Laplacian.  Sharp  global pointwise estimates and regularity properties of solutions are obtained as well. 
As a consequence, we characterize the solvability of the equation 
\begin{equation*}
-\Delta_p v \, =  {b} \,  \frac {|\nabla v|^{p}}{v} + \sigma \quad \text{on} \, \, \, \R^n, 
\end{equation*} 
where ${b}>0$. These results are new even in the classical case $p=2$. 

Our approach is based on the use of special nonlinear potentials of Wolff type adapted for ``sublinear'' problems, and related integral  inequalities. It allows us to treat simultaneously several problems of this type, such as equations with general quasilinear operators $\text{div} \,  \mathcal{A}(x, \nabla u)$, 
fractional Laplacians $(-\Delta)^{\alpha}$, or fully nonlinear $k$-Hessian operators.   
\end{abstract}

\maketitle

\section{Introduction}

In the present paper, we study elliptic equations of the type 
\begin{equation}\label{eq1}
\left\{ \begin{array}{ll}
-\Delta_pu=\sigma \, u^q & \text{in} \, \, \,  {\R}^n,\\&\\
\displaystyle{\liminf_{x \to \infty} \, u(x)}=0, & \, \, u>0, 
\end{array} \right.
\end{equation}
where  $0<q<p-1$, $\Delta_p =\text{div}(\nabla u|\nabla u|^{p-2})$ is the $p$-Laplace operator, 
and $\sigma\ge 0$ is an arbitrary  locally integrable function, or locally finite Borel measure, $\sigma \in \M$; if $\sigma \in L^1_{{\rm loc}} (\R^n)$ we write $ d \sigma= \sigma \, dx$. 

Our main goal is to give necessary and sufficient conditions on $\sigma$ for the existence of weak solutions to \eqref{eq1}, understood in an appropriate renormalized sense. We also obtain matching upper and lower global pointwise bounds, and provide sharp $W^{1, p}_{{\rm loc}}$-estimates of solutions.   On our way, we identify key integral 
inequalities, and construct new nonlinear potentials of Wolff type that are intrinsic 
to a number of similar problems.

In particular, our approach is applicable to general quasilinear $\mathcal{A}$-Laplace operators $\text{div} \,  \mathcal{A}(x, \nabla u)$, and fully nonlinear $k$-Hessian operators, 
as well as equations with the fractional Laplacian, 
\begin{equation}\label{fraclap}
\left\{ \begin{array}{ll}
(-\Delta )^{\al}u=\sigma \, u^q & \text{in} \, \, \,  \mathbb{R}^n,\\ &\\
\displaystyle{\liminf_{ x \to \infty} \, u(x)}=0,& \, \, u>0, 
\end{array} \right.
\end{equation} 
for $0<q<1$ and $0<\al < \frac{n}{2}$; this includes the range $\al>1$ where the usual maximum principle is not available.

In the classical case $p=2$,  equation \eqref{eq1}, or equivalently \eqref{fraclap} with $\al=1$,  and $0<q<1$,  
serves as a model {\it sublinear} elliptic problem. It is easy to see that it is equivalent to the integral equation $u =\mathbf{N} (u^q d \sigma)$, where 
${\bf N} \omega=(-\Delta)^{-1} \omega$ is the Newtonian potential of  $d \omega = u^q d \sigma$ on $\R^n$. 

As we emphasize below,  equation \eqref{eq1} with $p=2$ and $0<q<1$ is governed by the important   integral inequality 
\begin{equation}\label{weight-2}
\left(\int_{\R^n} |\varphi|^q \, d \sigma\right)^{\frac 1 q} \le \varkappa \,  ||\Delta \varphi||_{L^1(\R^n)},  
\end{equation} 
for all test functions $\varphi \in C^2(\R^n)$ vanishing at infinity such that $-\Delta \varphi \ge 0$. 

Inequality \eqref{weight-2} represents the end-point case of the well-studied $(L^p, L^q)$ trace inequalities for $p>1$. A comprehensive  treatment  
of trace inequalities can be found  in \cite{Maz11}.

More precisely, we will use a localized version of \eqref{weight-2} where the measure 
$\sigma$ is  restricted to a 
ball $B=B(x, r)$, and the corresponding best constant $\varkappa$ is denoted by $\varkappa(B)$. These constants are used as building blocks in our key tool, a nonlinear potential of Wolff type, 
\begin{equation} \label{potential}
\mathbf{K} \sigma (x)  =  \int_0^{\infty} \frac{ \left[\varkappa(B(x, r))\right]^{\frac{q}{1-q}}}{r^{n-2}}\frac{dr}{r},  \quad x \in \R^n,
\end{equation} 
which, together with the usual Newtonian potential ${\bf N} \sigma$, provides sharp global estimates of solutions in the case $p=2$ and  $0<q<1$.

\par\ \par

This work has been motivated by the results of  Brezis and Kamin \cite{BK} who proved that \eqref{eq1},  with $p=2$ and $0<q<1$, has a {\it bounded} solution $u$ if and only 
if  ${\bf N} \sigma \in L^{\infty}(\R^n)$; moreover,  such a solution is unique, and there exists a constant $c>0$ so that 
$$
c^{-1} \, \left[\mathbf{N} \sigma(x)\right]^{\frac{1}{1-q}} \le u (x) \le c \, \mathbf{N} \sigma (x), \quad x \in \R^n. 
$$ 
As was pointed out in  \cite{BK}, both the lower and upper estimates of $u$ are sharp in a sense.  However, there is a substantial gap between them. We will be able  to bridge 
this gap by using both  $\mathbf{N} \sigma$ and $\mathbf{K} \sigma$, and extend these results to possibly unbounded solutions, as well 
as more general nonlinear equations.   
\par\ \par

We will be referring to  equation \eqref{eq1}  with $1<p<\infty$ and  $0<q<p-1$, as well as other nonlinear problems where analogous phenomena occur in a natural way, 
as {\it sublinear problems} in general. One of the main features that distinguishes them  from the ``superlinear'' case $q\ge p-1$ is the absence of any smallness assumptions on $\sigma$. 

Simultaneously with  \eqref{eq1}, we will be able to investigate  the equation 
 with singular natural growth in the gradient term, 
\begin{equation}\label{ric-eq}
\left\{ \begin{array}{ll}
-\Delta_p v \, =  {b} \,  \frac {|\nabla v|^{p}}{v} + \sigma& \text{in} \, \, \,  {\R}^n,\\ &\\ 
\displaystyle{\liminf_{x \to \infty} \, v(x)=0}, & \, \, v>0, 
\end{array} \right.
\end{equation} 
where $\sigma \ge 0$ as above, and ${b} >0$ is a constant that can be expressed in terms of $q$ in  \eqref{eq1},  
\begin{equation}\label{constB}
{b} =\frac{q(p-1)}{p-1-q}, \qquad 0<q<p-1. 
\end{equation} 

 Equations \eqref{eq1} and  \eqref{ric-eq}  are {\it formally} related via the transformation 
 \begin{equation}\label{substitution}
v= \frac{p-1}{p-1-q} \, u^{\frac{p-1-q}{p-1}}.
\end{equation} 

Actually, this relationship fails for some solutions $u$ and $v$ due to the occurrence of certain singular measures, 
as was first observed by Ferone and Murat \cite{FM} (see also \cite{GM}) who studied a similar phenomenon for a related class of equations. In general, a solution $v$ of  
\eqref{ric-eq} gives rise merely  to a supersolution $u$ of \eqref{eq1}. 
Nevertheless, a careful analysis  allows us to give necessary and sufficient conditions for the existence of weak solutions to \eqref{ric-eq}, and justify this transformation   whenever possible (see Theorem~\ref{main-riccati} and the discussion in Sec.~\ref{gradeq} below).

\par\ \par

Equations of the type \eqref{eq1} and  \eqref{ric-eq} have been extensively studied,  mostly in bounded domains $\Omega \subset \R^n$, with $\sigma \in L^r (\Omega)$ for some $r>1$, in \cite{Kra, BrOs, BO1, ABLP, AHBV, AGPW}.  
Various existence and uniqueness results 
for solutions in certain Sobolev spaces, and further references,  can be found there.

However, the precise conditions on $\sigma$ which ensure the existence of solutions are more subtle. In particular, $\sigma$ 
can be an $L^1_{{\rm loc}}$-function, or a measure singular 
with respect to Lebesgue measure. (Notice that $\sigma$ must be absolutely continuous with respect 
to the $p$-capacity; see Lemma~\ref{abscap} below.)  
Analogues of our results for bounded domains $\Omega$ under minimal restrictions on $\sigma$ 
will be presented elsewhere.

\par\ \par

We now introduce  some elements of nonlinear potential theory that will be used throughout the paper. Originally Wolff  potentials were introduced  in \cite{HW} in relation to the spectral synthesis problem in Sobolev spaces. The Wolff potential $\wa\sigma$,   
 where $\sigma\in \M$,  is defined, for $1<p<\infty$ and  $0<\al< \frac n p$, by 
\begin{equation} \label{Wolffpot}
\wa\sigma(x)=\int_0^{\infty} \left[\frac{\sigma(B(x,r))}{r^{n-\al p}}\right]^{\frac{1}{p-1}}\frac{dr}{r}, \quad x \in \R^n.  
\end{equation}
Here $\sigma(B(x, r))=\int_{B(x, r)} d \sigma$ for a ball $B(x, r)=\{y \in \R^n: \, |x-y|<r\}$. 

 In the context of quasilinear problems, Wolff potentials with $\al=1$ appeared in the fundamental work of Kilpel\"{a}inen and Mal\'{y} \cite{KM1, KM2}. 
 A global version of one of their main theorems states that if $U \ge 0$ is a solution  to the equation 
\begin{equation}\label{Ueq}
\left\{ \begin{array}{ll}
-\Delta_p U  =\sigma &\textrm{in ~~} \mathbb{R}^n,\\ &\\
\displaystyle{\, \, \, \liminf_{x \to \infty} \, U(x)=0},
\end{array} \right.
\end{equation}
understood in a potential theoretic or renormalized sense (see \cite{KKT}), then there exists a constant $K=K(n,p)>0$  such that 
\begin{equation}\label{Uest}
 \frac{1}{K} \, \mathbf{W}_{1, p} \sigma (x) \le U(x)  \le K  \, \mathbf{W}_{1, p} \sigma (x), \quad x \in  \mathbb{R}^n. 
\end{equation}
Moreover, a solution $U\ge 0$  to \eqref{Ueq} exists if and only if $1<p<n$, and $ \mathbf{W}_{1, p} \sigma \not\equiv +\infty$, or equivalently  (see \cite{PV1}), 
\begin{equation}\label{Wolfinite}
\int_1^{\infty} \left[\frac{\sigma(B(0, r))}{r^{n- p}}\right]^{\frac{1}{p-1}}\frac{dr}{r} < +\infty.
\end{equation}

It turns out that Wolff potentials alone are not enough to control solutions of \eqref{eq1}. Along with $\mathbf{W}_{1, p} \sigma$, we will use intrinsic  
potentials of Wolff type  associated with  the  localized weighted norm inequalities,    
\begin{equation} \label{weight-lap}
\left(\int_{B} |\varphi|^q \, d \sigma\right)^{\frac 1 q} \le \varkappa(B) \,  ||\Delta_p \varphi||^{\frac{1}{p-1}}_{L^1(\R^n)},  
\end{equation} 
for all test functions $\varphi$ such that  $-\Delta_p \varphi \ge 0$, $\displaystyle{\liminf_{x \to \infty}} \, \varphi(x)=0$. Here $\varkappa(B)$ denotes the best constant in \eqref{weight-lap} 
associated with the measure $\sigma_B=\sigma|_B$ restricted to a ball $B$.  

We now introduce a new nonlinear potential  $\mathbf{K}_{1, p}\sigma$ defined by
\begin{equation} \label{potentialK}
\mathbf{K}_{1, p} \sigma (x)  =  \int_0^{\infty} \left[\frac{ \varkappa(B(x, r))^{\frac{q(p-1)}{p-1-q}}}{r^{n- p}}\right]^{\frac{1}{p-1}}\frac{dr}{r}, \quad x \in \R^n.
\end{equation} 
As we will show below,  $\mathbf{K}_{1, p} \sigma \not\equiv + \infty$ if and only if 
\begin{equation} \label{suffcond1}
\int_1^{\infty} \left[\frac{ \varkappa(B(0, r))^{\frac{q(p-1)}{p-1-q}}}{r^{n- p}}\right]^{\frac{1}{p-1}}\frac{dr}{r} < \infty.  
\end{equation}

We state our main results for equation \eqref{eq1}  in the form of the following theorems. Note that weak solutions $u \in L^q_{{\rm loc}} (d \sigma)$ are understood in the renormalized, or potential theoretic
sense (see Sec.~\ref{pre} for definitions); if $u \in W^{1, p}_{{\rm loc}}(\R^n)$, then  they are the usual distributional solutions. 
\begin{theorem} \label{mainthm}
Let $1<p<n$, $0 < q < p-1,$ and let $\sigma \in \M$. 

{\rm (i)} If both  \eqref{Wolfinite} and \eqref{suffcond1} hold, then  there exists a minimal 
renormalized ($p$-superharmonic) solution $u>0$ to \eqref{eq1} such that  
\begin{equation} \label{two-sided}
c^{-1}   \left [\mathbf{K}_{1, p} \sigma + \left(\w \sigma\right)^{\frac{p-1}{p-1-q}}\right] 
\le u \le c  \left [\mathbf{K}_{1, p} \sigma + \left(\w \sigma\right)^{\frac{p-1}{p-1-q}}\right],  
\end{equation}
where $c>0$ is a constant which  depends only on $p$, $q$, and $n$.

{\rm (ii)} Conversely, if there exists a nontrivial  renormalized supersolution $u$ to \eqref{eq1}, then both  \eqref{Wolfinite} and \eqref{suffcond1} hold,  and $u$ is bounded below 
by the minimal solution of statement {\rm (i)}. 

{\rm (iii)} In the case $p \ge n$ there are no nontrivial  supersolutions on $\R^n$. 
\end{theorem}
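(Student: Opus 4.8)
The plan is to prove the three statements of Theorem~\ref{mainthm} together, using the weighted norm inequality \eqref{weight-lap} and its best constants $\varkappa(B)$ as the bridge between the equation and the potentials $\mathbf{K}_{1,p}\sigma$ and $\w\sigma$. The first reduction is to pass from the differential equation \eqref{eq1} to the integral equation $u = \w(u^q\,d\sigma)$ for a suitable normalization of the Wolff potential; this is the analogue of the Newtonian reformulation mentioned for $p=2$, and it follows from the Kilpel\"ainen--Mal\'y estimates \eqref{Uest} applied to the measure $d\omega = u^q\, d\sigma$, together with the standard potential-theoretic machinery for renormalized $p$-superharmonic solutions referenced in Sec.~\ref{pre}. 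Once that reformulation is in place, a renormalized supersolution of \eqref{eq1} corresponds (up to constants depending on $n,p$) to a positive measurable $u$ with $u \ge c\,\w(u^q\, d\sigma)$, and a solution to equality.

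For the existence half (i), I would build the minimal solution by iteration: set $u_0 = c_0\,\w\sigma$ with $c_0$ small, and $u_{j+1} = c\,\w(u_j^q\, d\sigma)$. One must show the sequence stays bounded above by the right-hand side of \eqref{two-sided}; this is where the localized inequality \eqref{weight-lap} enters. The key pointwise lemma — which I expect is established just before this theorem — should say that for any positive $\w\sigma$-measurable $u$ satisfying the integral inequality, one has the a priori bound $\w(u^q\,d\sigma)(x) \lesssim \mathbf{K}_{1,p}\sigma(x) + (\w\sigma(x))^{\frac{p-1}{p-1-q}}$, obtained by splitting the radial integral defining the Wolff potential, estimating $\int_{B(x,r)} u^q\, d\sigma$ via \eqref{weight-lap} in terms of $\varkappa(B(x,r))$ and $\|\Delta_p(\text{truncated potential})\|_{L^1}$, and then summing the resulting geometric-type series, with the exponent $\frac{q(p-1)}{p-1-q}$ in \eqref{potentialK} appearing exactly as the fixed point of the induced recursion on the powers. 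Monotone convergence then gives a solution $u = \lim u_j$, and minimality follows because any supersolution dominates every $u_j$ by induction (using $u \ge c\,\w(u^q\,d\sigma) \ge c\,\w(u_j^q\,d\sigma) = u_{j+1}$ once $u \ge u_j$). The lower bound in \eqref{two-sided} is the easier direction: from $u = c\,\w(u^q\,d\sigma)$ one immediately gets $u \ge c\,\w(c_0^q (\w\sigma)^q\, d\sigma)$, and iterating (or a direct Harnack/capacitary argument) recovers both $(\w\sigma)^{\frac{p-1}{p-1-q}}$ and, via a dual form of \eqref{weight-lap}, the term $\mathbf{K}_{1,p}\sigma$.

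For the converse (ii), given a nontrivial renormalized supersolution $u$, the integral inequality $u \ge c\,\w(u^q\,d\sigma)$ forces $\w(u^q\,d\sigma)\not\equiv\infty$, hence by the Ptak--V\'azquez-type criterion \eqref{Wolfinite} (applied to $u^q\, d\sigma$, then transferred to $\sigma$ using that $u$ is locally bounded below on the support of any nontrivial part of $\sigma$) one deduces \eqref{Wolfinite} for $\sigma$ itself; and testing the inequality against truncated potentials as in the proof of the a priori bound, run in reverse, yields finiteness of $\mathbf{K}_{1,p}\sigma$, i.e. \eqref{suffcond1}. That $u$ dominates the minimal solution is the induction already described. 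Finally (iii): if $p\ge n$ then $\w\sigma(x)=\int_0^\infty [\sigma(B(x,r))/r^{n-p}]^{1/(p-1)}\,dr/r$ diverges at $r\to\infty$ for any nontrivial $\sigma$ (since $r^{n-p}$ does not grow), so $\w(u^q\,d\sigma)\equiv\infty$ whenever $u^q\,d\sigma$ is nontrivial, contradicting $u\ge c\,\w(u^q\,d\sigma)$ with $u$ finite a.e.; the only escape, $u^q\,d\sigma \equiv 0$, forces $\sigma$ trivial since $u>0$.

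The main obstacle is the a priori upper estimate underlying step (i): controlling $\w(u^q\,d\sigma)$ by $\mathbf{K}_{1,p}\sigma + (\w\sigma)^{\frac{p-1}{p-1-q}}$ uniformly, which requires choosing the right test functions in \eqref{weight-lap} (truncated/localized Wolff potentials whose $p$-Laplacian has controlled $L^1$ norm), carefully tracking the self-improving exponent through the nonlinear recursion, and handling the renormalized-solution subtleties (absolute continuity with respect to $p$-capacity, Lemma~\ref{abscap}) so that the formal manipulations are legitimate for measures $\sigma$ that may be singular. The dyadic decomposition and summation of the radial integral, together with verifying that the fixed-point exponent is exactly $\frac{q(p-1)}{p-1-q}$, is the technical heart of the argument.
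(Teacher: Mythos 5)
There is a genuine gap, and it sits exactly at the step you dispose of in one sentence: the passage between the PDE \eqref{eq1} and the integral equation $u=\w(u^q\,d\sigma)$. The Kilpel\"ainen--Mal\'y estimates \eqref{Uest} give only two-sided pointwise bounds with a constant $K=K(n,p)$; they do \emph{not} make a renormalized solution of \eqref{eq1} a solution of the integral equation ``to equality,'' nor (more importantly) do they convert a solution of the integral equation back into a $p$-superharmonic solution of \eqref{eq1}. Since $\w$ is not an inverse of $-\Delta_p$ for $p\neq 2$, your iteration $u_{j+1}=c\,\w(u_j^q\,d\sigma)$ produces, at best, a solution of the integral relation, and the theorem you are asked to prove is about the differential equation. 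The paper's proof of Theorem~\ref{mainthm} spends most of its effort precisely here: having obtained from Theorem~\ref{main} a solution $v$ of $v=K\,\w(v^q\,d\sigma)$, it constructs the PDE solution by solving $-\Delta_p u_j^k=\sigma u_{j-1}^q\chi_{B(0,2^k)}$ on exhausting balls with zero boundary data, using existence and uniqueness of renormalized solutions with measure data, the comparison principle of Lemma~\ref{comparison} (which itself requires the absolute continuity of $u_{j-1}^q\,d\sigma$ with respect to $p$-capacity, Lemma~\ref{abscap}), the Kilpel\"ainen--Mal\'y upper bound to keep $u_j^k\le v$, and the Trudinger--Wang weak continuity theorem (Theorem~\ref{weakconv}) to pass to the limits in $k$ and $j$. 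Minimality is likewise proved at the PDE level via Lemma~\ref{comparison}, not by the purely potential-theoretic induction you describe (where, in addition, the constants $K^{-1}$ for a supersolution and $c$ in your iteration do not match automatically). None of this machinery appears in your proposal, so the existence half of (i), the ``minimal'' assertion, and the transfer of the lower/upper bounds to the PDE solution are not established.

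Two secondary points. First, the starting iterate should be $u_0=c_0(\w\sigma)^{\frac{p-1}{p-1-q}}$, not $c_0\,\w\sigma$: it is the composition inequality $\w\bigl[(\w\sigma)^{\frac{q(p-1)}{p-1-q}}d\sigma\bigr]\gtrsim(\w\sigma)^{\frac{p-1}{p-1-q}}$ (Lemma~\ref{Wolfflemma}) that makes the sequence monotone increasing; with $c_0\,\w\sigma$ monotonicity fails in general. Second, your direction of use of \eqref{weight-lap} in the a priori bound is inverted in places: the paper controls the local masses $\int_{B}u_j^q\,d\sigma$ in the iteration by applying the localized inequality with $d\nu=\chi_B u_{j-1}^q\,d\sigma$, while the lower bound $u\gtrsim\mathbf{K}_{1,p}\sigma$ comes from the converse estimate $\kappa(B)\lesssim\bigl(\int_B u^q\,d\sigma\bigr)^{\frac{p-1-q}{q(p-1)}}$, proved by a Vitali covering/maximal function argument (Lemma~\ref{wolffestimate}), not by testing with truncated potentials. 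For (ii) your outline is in the right spirit once the supersolution is converted, via \eqref{Uest}, into $u\ge K^{-1}\w(u^q\,d\sigma)$ and fed into the necessity part of Theorem~\ref{main}; for (iii) the paper simply invokes Theorem~\ref{thmpotest}(ii), whereas your divergence heuristic would still need the lower Wolff estimate in the regime $p\ge n$, where \eqref{wolff-K} as stated does not apply.
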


We observe that neither of conditions  \eqref{Wolfinite} or \eqref{suffcond1} implies the other one.  
Condition  \eqref{Wolfinite} alone is not enough to ensure the existence of a global solution $u$ even if all 
the local embedding constants $\varkappa(B(0, r))$ in \eqref{suffcond1} are finite,  unless $\sigma$ is radially symmetric (see a counter example in Sec.~\ref{eg} below). 

\par\ \par

In the next theorem, we characterize solutions with $W^{1, p}_{{\rm loc}}$-regularity.

\begin{theorem} \label{W1pthm} Under the assumptions of Theorem \ref{mainthm}, there exists a nontrivial distributional solution 
$u\in W^{1, p}_{{\rm loc}}(\R^n)$ to \eqref{eq1} if and only if   \eqref{Wolfinite} and \eqref{suffcond1} hold together with the  local condition  
\begin{equation} \label{locWolff}
\int_B\left(\w \sigma_B\right)^{\frac{(1+q)(p-1)}{p-1-q}} d\sigma < \infty,
\end{equation}  
for all balls $B=B(0, r)$ in $\R^n.$ 
\end{theorem}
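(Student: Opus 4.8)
\textbf{Proof proposal for Theorem~\ref{W1pthm}.}

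The plan is to start from the two-sided pointwise bounds \eqref{two-sided} of Theorem~\ref{mainthm}, which already give us the minimal solution $u$ comparable to $\mathbf{K}_{1,p}\sigma + (\w\sigma)^{\frac{p-1}{p-1-q}}$, and convert the $W^{1,p}_{{\rm loc}}$-regularity question into a question about local integrability of the measure $u^q\,d\sigma$ driving the equation. The key observation is that a $p$-superharmonic solution $u$ of $-\Delta_p u = \sigma u^q$ belongs to $W^{1,p}_{{\rm loc}}(\R^n)$ precisely when the right-hand side measure $d\mu := u^q\,d\sigma$ has locally finite energy in the appropriate sense; concretely, by the Kilpel\"ainen--Mal\'y theory and standard results on $p$-superharmonic functions (e.g., the fact that $u\in W^{1,p}_{{\rm loc}}$ iff $u$ is locally bounded in energy, which for solutions of $-\Delta_p u=\mu$ reduces to a Wolff-potential condition on $\mu$ localized to balls), one has $u\in W^{1,p}_{{\rm loc}}$ iff for every ball $B$,
\begin{equation*}
\int_B \w\mu_B \, d\mu < \infty, \qquad d\mu = u^q \, d\sigma.
\end{equation*}
So the first step is to record this equivalence carefully, citing the renormalized/$p$-superharmonic machinery from Sec.~\ref{pre}.

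The second step is to substitute the pointwise bounds for $u$ into this local energy condition and show it collapses to \eqref{locWolff}. On a fixed ball $B=B(0,r)$, the global lower bound $u \gtrsim (\w\sigma)^{\frac{p-1}{p-1-q}}$ (in particular $u \gtrsim (\w\sigma_B)^{\frac{p-1}{p-1-q}}$ since $\w$ is monotone in the measure) gives
\begin{equation*}
\int_B \w(u^q\sigma_B)\, u^q \, d\sigma \gtrsim \int_B \w\big((\w\sigma_B)^{\frac{q(p-1)}{p-1-q}}\sigma_B\big)\,(\w\sigma_B)^{\frac{q(p-1)}{p-1-q}}\, d\sigma,
\end{equation*}
and one checks, using the elementary inequality $\w(f^{t}\,d\sigma)(x) \ge \big(\inf_{B}f\big)^{\cdots}\cdots$ together with a scaling/iteration argument on dyadic annuli, that this is comparable (up to constants depending on $p,q,n$ and on the fixed ball) to $\int_B (\w\sigma_B)^{\frac{(1+q)(p-1)}{p-1-q}}\,d\sigma$. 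The exponent bookkeeping is the routine part: $\frac{q(p-1)}{p-1-q}$ comes from the power on $u$, the extra factor $u^q$ contributes another $\frac{q(p-1)}{p-1-q}$, and $\frac{1}{p-1}$ from the Wolff potential itself combine to the stated exponent $\frac{(1+q)(p-1)}{p-1-q}$ after accounting for the $dr/r$ scaling. For the converse (upper) direction one uses the upper bound in \eqref{two-sided} together with the already-assumed \eqref{suffcond1}; here the contribution of the $\mathbf{K}_{1,p}\sigma$ term must be absorbed, and this is where one invokes a Wolff-potential "self-improving" or weak-type estimate (of the form $\int_B (\mathbf{K}_{1,p}\sigma)^{q}\cdots \, d\sigma \lesssim \int_B (\w\sigma_B)^{\cdots}d\sigma$ modulo the finiteness \eqref{suffcond1}), so that the full local energy is dominated by \eqref{locWolff} plus finite terms.

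The main obstacle I anticipate is the equivalence in the first step: making precise that a renormalized solution of $-\Delta_p u=\mu$ lies in $W^{1,p}_{{\rm loc}}$ exactly when $\w\mu$ is locally $\mu$-integrable, for a measure $\mu$ that is itself defined in terms of the unknown $u$. One has to be careful that $\mu=u^q\,d\sigma$ is a genuine locally finite measure (which follows from $u\in L^q_{{\rm loc}}(d\sigma)$, part of the definition of weak solution) and that the $p$-superharmonicity of $u$ is not destroyed; the circularity is broken precisely by the a priori pointwise bounds \eqref{two-sided}, which let us estimate $\mu$ from both sides independently of $u$. A secondary technical point is handling the behavior near infinity — but since \eqref{locWolff} is only required for balls centered at the origin and $W^{1,p}_{{\rm loc}}$ is a local condition, the global tail conditions \eqref{Wolfinite} and \eqref{suffcond1} (already assumed) take care of this. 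Finally, one should remark that when $q<p-1$ the exponent $\frac{(1+q)(p-1)}{p-1-q}$ is strictly larger than $\frac{p-1}{p-1-q}$, so \eqref{locWolff} is genuinely stronger than what is needed merely for the solution to exist, explaining why generic solutions from Theorem~\ref{mainthm} need not have $W^{1,p}_{{\rm loc}}$-regularity.
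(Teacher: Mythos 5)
Your overall architecture is the same as the paper's: reduce the $W^{1,p}_{{\rm loc}}$-question to the statement that $d\mu=u^q\,d\sigma$ belongs to $W^{-1,p'}_{{\rm loc}}(\R^n)$, equivalently (by the local Wolff inequality \eqref{wolff-ineq-loc}) that $\int_B \w\mu_B\,d\mu_B<\infty$ for all balls, and then feed in the pointwise bounds. But the step you dismiss as ``standard'' is exactly where the paper has to work: the implication ``$\mu\in W^{-1,p'}_{{\rm loc}}$ $\Rightarrow$ the $p$-superharmonic solution $u$ of $-\Delta_p u=\mu$ with $\liminf_{x\to\infty}u=0$ lies in $W^{1,p}_{{\rm loc}}$'' is Lemma~\ref{loc-sol}, which the authors explicitly say they could not find in the literature and prove by a nontrivial Caccioppoli-type bootstrap (the iteration of the exponents $\beta_j$, using the global Wolff estimate \eqref{wolff-K} and the decay at infinity). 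Citing Kilpel\"ainen--Mal\'y for this is a genuine gap; only the easy converse (a distributional $W^{1,p}_{{\rm loc}}$-solution forces $u^q d\sigma\in W^{-1,p'}_{{\rm loc}}$) is routine, and that is the direction used for necessity. Your necessity argument itself is fine: local finiteness of the Wolff energy, the lower bound $u\ge c\,(\w\sigma_B)^{\frac{p-1}{p-1-q}}$ from Theorem~\ref{thmlowest}, and Lemma~\ref{Wolfflemma} give \eqref{locWolff} with the exponent $\frac{p-1}{p-1-q}+\frac{q(p-1)}{p-1-q}=\frac{(1+q)(p-1)}{p-1-q}$, while \eqref{Wolfinite} and \eqref{suffcond1} follow from Theorem~\ref{mainthm}(ii); this matches the paper's necessity proof.

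The second gap is in the sufficiency direction, in ``absorbing'' the $\mathbf{K}_{1,p}\sigma$ term. Your claimed comparability of $\int_B \w\bigl((\w\sigma_B)^{\frac{q(p-1)}{p-1-q}}\,d\sigma_B\bigr)(\w\sigma_B)^{\frac{q(p-1)}{p-1-q}}\,d\sigma$ with $\int_B(\w\sigma_B)^{\frac{(1+q)(p-1)}{p-1-q}}\,d\sigma$ holds only from below (Lemma~\ref{Wolfflemma}); there is no reverse pointwise inequality of the form $\w(f^t\,d\sigma)\lesssim(\w\sigma)^{\frac{t}{p-1}+1}$ --- if there were, the intrinsic constants $\kappa(B)$ and the potential $\mathbf{K}_{1,p}\sigma$ would be redundant, contrary to the counterexample of Sec.~\ref{eg}. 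What sufficiency actually requires is $u\in L^{1+q}_{{\rm loc}}(\R^n,d\sigma)$, which by \eqref{two-sided} reduces to $\w\sigma\in L^{\frac{(1+q)(p-1)}{p-1-q}}_{{\rm loc}}(d\sigma)$ and $\mathbf{K}_{1,p}\sigma\in L^{1+q}_{{\rm loc}}(d\sigma)$ (note the power $1+q$, not the power $q$ appearing in your sketched ``self-improving'' estimate). The local part of the $\mathbf{K}$-estimate is the heart of Lemma~\ref{w1p}: it uses the existence, under \eqref{locWolff}, of a finite energy solution $u_B\in L^{1+q}(d\sigma_B)$ of the localized integral equation (Theorem 3.5 of \cite{CV1}) together with the bound $\kappa(B(y,r)\cap B)^{\frac{q(p-1)}{p-1-q}}\lesssim\int_{B(y,r)}u_B^q\,d\sigma_B$ from Lemma~\ref{wolffestimate}; your proposal does not supply a substitute for this input, so the sufficiency half does not close as written. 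Incidentally, once $u\in L^{1+q}_{{\rm loc}}(d\sigma)$ is known, the paper avoids plugging upper bounds into the Wolff potential altogether: it uses $\w(u^q\,d\sigma_B)\le\w(\mu[u])\le K\,u$, so the local energy is at most $K\int_B u^{1+q}\,d\sigma$, and then Lemma~\ref{loc-sol} finishes --- a cleaner route than the dyadic absorption you outline.
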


We remark that conditions \eqref{Wolfinite}, \eqref{suffcond1} and \eqref{locWolff} 
are mutually independent. 

\par\ \par

Nonlinear elliptic PDE discussed above are studied in the general framework of nonlinear integral equations, 
\begin{equation}\label{integral} 
u=\wa(u^qd\sigma) \quad \text{in} \, \, \R^n, \quad u > 0,  
\end{equation} where 
$1<p<\infty$, $0<\al< \frac{n}{p}$. Here the special case $\al=1$ corresponds to the $p$-Laplacian, whereas $\al=\frac{2k}{k+1}$, $p=k+1$ to the $k$-Hessian operator (see \cite{TW0}, \cite{PV1}). 

The special case $p=2$ in \eqref{integral} gives the fractional Laplace equation \eqref{fraclap} in the equivalent integral form $u=\frac{1}{c(\alpha, n)} \, \mathbf{I}_{2 \al}(u^q \, d \sigma),$ where  $\mathbf{I}_{2 \al}\mu$ is the Riesz potential of order $2 \al$: 
$$\mathbf{I}_{2 \al}\mu= |x|^{2 \alpha-n} \star \mu = (n-2 \alpha) \, \mathbf{W}_{\al, 2}\mu= c(\alpha, n) \, (-\Delta)^{-\al} \mu,$$ 
for $\mu \in \M$, $0<2 \al<n$. In what follows,  the normalization 
constant $c(\alpha, n)$ will be dropped for the sake of convenience; in particular, 
$\mathbf{I}_2 \mu={\bf N} \mu$. 

We will introduce in Sec.~\ref{integralequation} a fractional version $\mathbf{K}_{\al, p}$ of the intrinsic potential  \eqref{potentialK} for all $p>1$, $0<q<p-1$, $0<\al p<n$, and 
in Sec.~\ref{thm} deduce analogues of 
Theorem~\ref{mainthm} and Theorem~\ref{W1pthm} for the $\mathcal{A}$-Laplacians, $k$-Hessians and fractional Laplacians as  
a consequence of the general Theorem~\ref{main}.

In particular, for the fractional Laplacian equation \eqref{fraclap}, let 
$\kappa(B)$ denote  the least constant in the localized integral inequality
\begin{equation} \label{Rieszineq}
||\mathbf{I}_{2 \al} \nu||_{L^q(d\sigma_{B})} \le \kappa(B) \, \nu(\R^n), \quad \forall \nu \in \M, 
\end{equation}
where $0<q<1$. It is easy to see 
that the constant $\kappa(B)$ does not change if we restrict ourselves to absolutely 
continuous $\nu \in L^1_{+}(\R^n)$.

We define the corresponding nonlinear potential of Wolff type by 
\begin{equation} \label{potential-frac}
\mathbf{K}_{\al, 2} \sigma (x)  =  \int_0^{\infty} \frac{\left[ \kappa(B(x, r))\right]^{\frac{q}{1-q}}}{r^{n- 2\al}}\frac{dr}{r},  \quad x \in \R^n.  
\end{equation}
Conditions \eqref{Wolfinite}, \eqref{suffcond1} need to be replaced with 
\begin{equation}\label{Rieszfinite}
 \int_1^{\infty} \frac{ \left[\kappa(B(0, r))\right]^{\frac{q}{1-q}}}{r^{n- 2\al }}\frac{dr}{r}  + \int_1^{\infty} \frac{\sigma(B(0, r))}{r^{n- 2 \al}}\frac{dr}{r} < \infty,  
\end{equation}
which ensures that both $\mathbf{K}_{\al, 2} \sigma$ and $\mathbf{I}_{2 \al} \sigma$  are not identically infinite. 

We state our main results for sublinear fractional Laplacian equations as follows. 

\begin{theorem} \label{fracmain}
Let $0 < \al < \frac{n}{2}$,  $0< q<1$, and $\sigma \in \M$.

{\rm (i)} Suppose that \eqref{Rieszfinite} holds. Then there exists a minimal solution $u>0$ to \eqref{fraclap} such that 
$\liminf_{x \to \infty} u(x)=0$, and 
\begin{equation} \label{fracsolution}
c^{-1}   \left [\mathbf{K}_{\al, 2} \sigma + \left(\mathbf{I}_{2 \al} \sigma\right)^{\frac{1}{1-q}}\right] 
\le u \le c  \left [\mathbf{K}_{\al, 2} \sigma +  \left(\mathbf{I}_{2 \al} \sigma\right)^{\frac{1}{1-q}}\right], 
\end{equation} 
where $c>0$ is a constant which depends only on $\al$, $q$, and $n$.

{\rm (ii)} Conversely, if there exists a nontrivial supersolution $u$ to \eqref{fraclap} then \eqref{Rieszfinite} holds, and $u$ satisfies the lower bound in \eqref{fracsolution}.
\end{theorem}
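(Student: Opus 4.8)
The plan is to derive Theorem~\ref{fracmain} from the general integral-equation framework announced after the theorem, specializing $\mathbf{W}_{\al,p}$ and $\mathbf{K}_{\al,p}$ to $p=2$. Since for $p=2$ one has $\mathbf{W}_{\al,2}\mu = \frac{1}{n-2\al}\mathbf{I}_{2\al}\mu$ and $\frac{p-1}{p-1-q}=\frac{1}{1-q}$, the two-sided bound \eqref{fracsolution} is exactly the $p=2$ instance of \eqref{two-sided}, and \eqref{Rieszfinite} is the $p=2$ instance of \eqref{Wolfinite} together with \eqref{suffcond1} (here $\varkappa(B)$ in \eqref{weight-lap} becomes the constant $\kappa(B)$ in the Riesz-potential inequality \eqref{Rieszineq}, after passing to the dual form of the trace inequality: the $L^1\to L^q$ mapping property of $\mathbf{I}_{2\al}$ against $d\sigma_B$ is dual to the embedding $\|\mathbf{I}_{2\al}\nu\|_{L^q(d\sigma_B)}\le\kappa(B)\,\nu(\R^n)$, so one must check $\varkappa(B)^{1/(p-1)}=\varkappa(B)\simeq \kappa(B)$ when $p=2$). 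So the first step is a bookkeeping lemma identifying the constants and the potentials in the $p=2$ case.

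Next I would establish the equivalence between the PDE \eqref{fraclap} and the integral equation $u=\mathbf{I}_{2\al}(u^q\,d\sigma)$ (dropping $c(\al,n)$ as in the text). For $0<\al<1$ this is classical via the maximum principle; the point flagged in the introduction is that for $\al\ge 1$ the usual maximum principle fails, so instead I would invoke the representation of nonnegative solutions with $\liminf_{x\to\infty}u(x)=0$ through the Riesz kernel directly — any such $u$ solving $(-\Delta)^\al u = u^q\,d\sigma$ in the distributional/renormalized sense and decaying at infinity must equal its own Riesz potential $\mathbf{I}_{2\al}(u^q d\sigma)$, because the difference is $\al$-harmonic on $\R^n$ and vanishes at infinity. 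This reduces both existence and the pointwise bounds to statements about \eqref{integral} with $p=2$.

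Then, for part (i), I would apply the general existence theorem (Theorem~\ref{main}, cited as the source of Theorems~\ref{mainthm} and~\ref{W1pthm}) to \eqref{integral} with $p=2$: condition \eqref{Rieszfinite} guarantees $\mathbf{K}_{\al,2}\sigma\not\equiv\infty$ and $\mathbf{I}_{2\al}\sigma\not\equiv\infty$, which by that theorem yields a minimal positive solution obeying the two-sided estimate; one then reads off that $u=\mathbf{I}_{2\al}(u^q d\sigma)$ is a genuine solution of \eqref{fraclap} with the required decay, using that the right-hand side of \eqref{fracsolution} tends to $0$ along a sequence $x\to\infty$ (a consequence of finiteness of the tail integrals in \eqref{Rieszfinite}). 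For part (ii), given a nontrivial supersolution $u$, the same equivalence gives $u\ge\mathbf{I}_{2\al}(u^q d\sigma)$; feeding this self-improving inequality into the iteration/lower-bound machinery of the general theory forces both tail integrals in \eqref{Rieszfinite} to converge and forces $u$ to dominate the minimal solution, hence the lower bound in \eqref{fracsolution}.

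The main obstacle I anticipate is \emph{not} the formal reduction but the verification that the general Theorem~\ref{main} genuinely covers the fractional case in the full range $0<\al<n/2$, including $\al>1$: one needs the Riesz potential $\mathbf{I}_{2\al}$ to possess the structural properties (a weak maximum principle / domination principle for Riesz potentials, quasi-additivity of the localized trace constants $\kappa(B)$ over dyadic scales, and a Wolff-potential–type lower bound $\mathbf{I}_{2\al}(\mathbf{I}_{2\al}\sigma)^{q}\,d\sigma)\gtrsim\mathbf{K}_{\al,2}\sigma$) on which the general proof rests. The delicate point is that while $\al$-superharmonicity behaves well, pointwise comparison arguments that are automatic for $\al\le 1$ must be replaced by potential-theoretic estimates valid for all $0<2\al<n$; I would isolate exactly which lemmas from the general theory require only these kernel properties and check that the Riesz kernel $|x|^{2\al-n}$ satisfies them, so that the proof goes through verbatim with $\mathbf{W}_{\al,2}$ replaced by $\mathbf{I}_{2\al}$ and $\varkappa$ by $\kappa$.
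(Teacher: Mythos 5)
Your reduction to Theorem~\ref{main} with $p=2$ is exactly the paper's proof, which consists of the one-line observation that \eqref{fraclap} is \emph{by definition} understood as the integral equation $u=\mathbf{I}_{2\al}(u^q\,d\sigma)$ and that $\mathbf{I}_{2\al}$ coincides with $\mathbf{W}_{\al,2}$ (up to the dropped normalization constant), so \eqref{Rieszfinite} is precisely \eqref{wolff01} together with \eqref{suffcond} for $p=2$ and \eqref{fracsolution} is \eqref{alpha}. The two obstacles you anticipate are non-issues: no PDE-to-integral-equation equivalence (hence no maximum principle, domination principle, or Liouville-type argument for $\al\ge 1$) is invoked, and Theorem~\ref{main} is already stated and proved for the full range $0<\al p<n$ by purely integral and covering arguments, so nothing needs to be re-verified for $\al>1$.
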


It is worth observing that condition \eqref{Rieszfinite} characterizes the existence 
of $0<u<\infty$ $d \sigma$-a.e. such that $u \ge \mathbf{I}_{2 \alpha} (u^q d \sigma)$ 
$d \sigma$-a.e. on $\R^n$, for $0<q<1$, 
which can be regarded as a sublinear version of Schur's Lemma in this case.

\par\ \par

We next turn to equation  \eqref{ric-eq} treated via relation \eqref{substitution}. 
The following theorem demonstrates that conditions  
\eqref{Wolfinite} and \eqref{suffcond1} are necessary and sufficient for the solvability 
of  this equation as well. In particular, if $u$ is  a solution  to \eqref{eq1} then $v$ is a solution to \eqref{ric-eq}. The opposite implication fails to be  true  since $u$ is only a supersolution to \eqref{eq1}. In order that  $u$ be a genuine solution, one needs to impose extra restrictions on $v$ specified in statement (iii) of Theorem~\ref{main-riccati}. These restrictions  are sharp as is evident from simple examples (see details in Sec.~\ref{gradeq}).

\begin{theorem} \label{main-riccati}  
Let $1<p<\infty$ and $0<q<p-1$. Suppose ${b}>0$ is defined by \eqref{constB}, and $\sigma \in M^{+}(\R^n)$. 
 
 (i) If $u$ is a renormalized solution to \eqref{eq1} then $v$ defined by \eqref{substitution} is a renormalized  solution to \eqref{ric-eq}. Consequently, if both \eqref{Wolfinite} and \eqref{suffcond1} hold, then \eqref{ric-eq}  has a renormalized  solution   $v$ 
 which satisfies 
both the lower bound 
\begin{equation} \label{lowerB}
v \ge c^{-1} \left[ \left(\mathbf{K}_{1, p} \sigma\right)^{\frac{p-1-q}{p-1}} +  \mathbf{W}_{1, p} \sigma
\right], 
 \end{equation} and the upper bound 
\begin{equation} \label{upperB}
v \le c \left[ \left(\mathbf{K}_{1, p} \sigma\right)^{\frac{p-1-q}{p-1}} +  \mathbf{W}_{1, p} \sigma\right], 
\end{equation}
where $c>0$ depends only on $p$, $q$, and $n$. 

(ii) If   \eqref{ric-eq} has a renormalized solution 
$v > 0$, then for every ball $B$ and $w_B=\frac{|Dv|^p}{v} \, \chi_B$, we have 
\begin{equation} \label{necessary}
||v||_{L^{\frac{q(p-1)}{p-1-q}, \infty}(w_B)} < \infty. 
\end{equation}
Moreover, $v$ satisfies the lower bound  \eqref{lowerB}, and $u$ defined by \eqref{substitution} is a renormalized supersolution to 
\eqref{eq1}; consequently, both 
\eqref{Wolfinite} and \eqref{suffcond1} hold. 

(iii) Furthermore, if $v$ satisfies a strong-type version of \eqref{necessary}, 
\begin{equation} \label{sufficient}
||v||_{L^{\frac{q(p-1)}{p-1-q}}(w_B)} < \infty,
\end{equation}
for every ball $B$, then $u$ is actually a renormalized solution to \eqref{eq1}.
 \end{theorem}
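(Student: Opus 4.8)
The plan is to build everything on the pointwise chain rule relating the two equations. If $u>0$ is smooth and $v=\frac1\beta u^{\beta}$ with $\beta=\frac{p-1-q}{p-1}\in(0,1)$ (so $0<q<p-1$), then $\nabla v=u^{\beta-1}\nabla u$, and a direct computation gives
$$-\Delta_p v= u^{-q}\,(-\Delta_p u) + q\, u^{-q-1}\,|\nabla u|^{p}, \qquad \frac{|\nabla v|^{p}}{v}= \beta\, u^{-q-1}\,|\nabla u|^{p},$$
whence $-\Delta_p v= u^{-q}(-\Delta_p u) + b\,\frac{|\nabla v|^{p}}{v}$ with $b=q/\beta=\frac{q(p-1)}{p-1-q}$, which is the formal equivalence encoded in \eqref{substitution} and \eqref{constB}. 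The whole theorem amounts to making this identity rigorous in the renormalized framework and, crucially, to tracking when a nonnegative singular defect measure appears. One preliminary remark makes the changes of variable manageable: a positive renormalized solution of either \eqref{eq1} or \eqref{ric-eq} has a lower semicontinuous representative that is positive everywhere, hence locally bounded below by a positive constant, so the maps $t\mapsto\frac1\beta t^{\beta}$ and $t\mapsto(\beta t)^{1/\beta}$ are locally regular and the only genuine difficulty lies on the truncation level and on sets of zero $p$-capacity.

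\emph{Part (i).} Given a renormalized solution $u$ of \eqref{eq1}, I would set $u_k=\min(u,k)$, $v_k=\frac1\beta u_k^{\beta}\in W^{1,p}_{{\rm loc}}(\R^n)$, write the renormalized formulation for $u$ tested against functions adapted to $v_k$, and pass to the limit $k\to\infty$. The terms supported on $\{k<u<k+1\}$ vanish because, by the very definition of a renormalized solution, $\int_{\{k<u<k+1\}}|\nabla u|^{p}\,dx\to0$; the gradient contribution $\frac{|\nabla v_k|^{p}}{v_k}\le\beta\,u_k^{-q-1}|\nabla u_k|^{p}$ is controlled on compact sets since $u\ge c>0$ there. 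This identifies $v=\frac1\beta u^{\beta}$ as a renormalized solution of \eqref{ric-eq}. If in addition \eqref{Wolfinite} and \eqref{suffcond1} hold, Theorem~\ref{mainthm}(i) produces $u$ satisfying \eqref{two-sided}; raising it to the power $\beta$ and using $(a+b)^{\beta}\asymp a^{\beta}+b^{\beta}$ for $0<\beta<1$, together with $\big((\w\sigma)^{\frac{p-1}{p-1-q}}\big)^{\beta}=\w\sigma$ and $(\mathbf{K}_{1,p}\sigma)^{\beta}=(\mathbf{K}_{1,p}\sigma)^{\frac{p-1-q}{p-1}}$, yields the lower and upper bounds \eqref{lowerB}, \eqref{upperB}.

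\emph{Parts (ii) and (iii).} Conversely, let $v>0$ be a renormalized solution of \eqref{ric-eq} and set $u=(\beta v)^{1/\beta}$, with $\gamma=1/\beta>1$, so that $t\mapsto(\beta t)^{\gamma}$ is convex. Running the chain rule in reverse on the truncation level gives $-\Delta_p u=\sigma\,u^{q}+\mu$, where $\mu\ge0$ is a defect measure (in general singular with respect to $p$-capacity): in transferring the gradient term of the $v$-equation to the right side of the $u$-equation one loses, in general, a concentrated part, because the renormalized identity for $b\,\frac{|\nabla v|^{p}}{v}$ need not survive the nonlinear change of variable unless extra integrability of $\nabla v$ is available --- this is the phenomenon of Ferone and Murat \cite{FM} (cf.\ also \cite{GM}). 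Thus $u$ is a renormalized supersolution of \eqref{eq1}, so Theorem~\ref{mainthm}(ii) forces \eqref{Wolfinite} and \eqref{suffcond1} and shows $u$ dominates the minimal solution, whence $v=\frac1\beta u^{\beta}$ inherits \eqref{lowerB}. For the necessary weak-type bound \eqref{necessary} I would test the renormalized formulation of \eqref{ric-eq} with suitable truncations of powers of $v$: writing $s=\frac{(p-1)^{2}}{p\,(p-1-q)}$ one checks $v^{\,b}\,dw_B=s^{-p}\,|\nabla(v^{s})|^{p}\,\chi_B\,dx$ with $b=\frac{q(p-1)}{p-1-q}$, so the control of the truncation energy converts into a Marcinkiewicz bound for $v$ against the weight $w_B$. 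Finally, for (iii): the strong-type hypothesis \eqref{sufficient} says precisely that $v^{s}\in W^{1,p}(B)$ for every ball $B$, i.e.\ exactly the amount of integrability of $\nabla v$ needed to turn the renormalized chain rule into an identity; hence the defect measure $\mu$ vanishes and $u$ is a genuine renormalized solution of \eqref{eq1}.

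\emph{Main obstacle.} The crux is the rigorous chain rule for renormalized solutions and the precise identification and control of the defect measure $\mu$ in part (ii) --- in particular, showing that $\mu$ is concentrated on a set of zero $p$-capacity and that it disappears exactly under \eqref{sufficient}. This relies on the structure theory of renormalized solutions (splitting of the data measure into its diffuse and concentrated parts, stability under truncation) and on the bookkeeping that converts the weighted Lebesgue and Marcinkiewicz norms of $v$ relative to $w_B$ into integrability of $\nabla v$, thereby matching the $W^{1,p}_{{\rm loc}}$ threshold \eqref{locWolff} of Theorem~\ref{W1pthm}.
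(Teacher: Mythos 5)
Your overall plan is the same as the paper's: the same substitution, truncation at level $k$, use of the locally renormalized formulation with power-type test functions, a weak-type bound coming from the $k^{\frac{q(p-1)}{p-1-q}}$ tail term, and dominated convergence to kill the defect under \eqref{sufficient} (your identity $v^{b}\,dw_B=s^{-p}|\nabla (v^{s})|^{p}\chi_B\,dx$ is correct, and your part (iii) mechanism is exactly the paper's). But two load-bearing steps are missing or justified by arguments that would fail. First, in part (i) the real issue is proving $\frac{|Dv|^{p}}{v}\in L^1_{\rm loc}$, i.e.\ $u^{-q-1}|Du|^{p}\in L^1_{\rm loc}$, so that the right-hand side of \eqref{ric-eq} is a locally finite measure. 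Your justification (``controlled on compact sets since $u\ge c>0$ there'') only bounds the weight $u_k^{-q-1}$; it gives $\frac{|\nabla v_k|^{p}}{v_k}\in L^1_{\rm loc}$ for each fixed $k$ but nothing uniform in $k$, since $\|\nabla u_k\|_{L^p(K)}$ may blow up as $k\to\infty$ — indeed $u\notin W^{1,p}_{\rm loc}$ is possible precisely when \eqref{locWolff} fails (Theorem~\ref{W1pthm}). Likewise, the slice-energy property $\int_{\{k<u<k+1\}}|\nabla u|^{p}\to 0$ is not part of the (local) renormalized/$p$-superharmonic framework used here, is only meaningful locally, and requires knowing the measure $u^{q}\sigma$ charges no set of zero $p$-capacity. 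The paper instead tests the renormalized identity for $u$ with $h(u)=u_k^{-q}$: since the resulting gradient term has a sign, one gets the uniform bound \eqref{compare}, and monotone convergence then yields $\frac{|Dv|^{p}}{v}\in L^1_{\rm loc}$ before any limit is taken in the equation.

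Second, in part (ii) the central claims — that $u=(\gamma v)^{1/\gamma}$ is $p$-superharmonic, that $-\Delta_p u\ge u^{q}\sigma$ (nonnegative defect), and the uniform bound that produces \eqref{necessary} — are exactly what has to be proved, and you defer them to the ``main obstacle'' without an argument. The paper's route is concrete: write $-\Delta_p v_k= b\,\frac{|\nabla v_k|^{p}}{v_k}+\sigma\chi_{\{v<k\}}+\tilde\omega_k$ with $\tilde\omega_k\ge 0$ carried by $\{v=k\}$ (see \eqref{e1}), verify distributionally that $-\Delta_p u_k\ge0$ so $u_k$, and hence $u=\lim u_k$, is $p$-superharmonic; then a single identity, obtained by testing the renormalized formulation of \eqref{ric-eq} with $h(v)=(\gamma v_k)^{q/\gamma}$ (equations \eqref{usub}--\eqref{vweak}), simultaneously gives the supersolution inequality (monotone convergence), the Marcinkiewicz bound \eqref{necessary} (the left side is bounded uniformly in $k$ because $|Du|^{p-1}\in L^1_{\rm loc}$, which in turn uses the $p$-superharmonicity of $u$ just established, while the right side contains $b\gamma^{q/\gamma}k^{\frac{q(p-1)}{p-1-q}}\int_{\{v>k\}}\frac{|Dv|^{p}}{v}\phi\,dx$), and, under \eqref{sufficient}, the vanishing of that term, i.e.\ part (iii). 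No structure theory of the defect (diffuse/concentrated splitting) is needed, and your side remark that one must show $\mu$ is concentrated on a set of zero $p$-capacity is not required for the theorem. As written, your proposal is a correct roadmap but does not contain the proofs of these key steps, and the specific justifications you do offer in part (i) would not close the gap.
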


In conclusion, we remark that we have stated our results for minimal  
``ground state'' solutions which vanish at infinity, but they have obvious analogues  for 
solutions such that $\liminf_{x \to \infty}u=c$ where $c > 0$, as discussed in  \cite{BK} 
in the case $p=2$. 

The brief contents of the paper are as follows. In Sec. \ref{pre} we introduce basic definitions, notations, and preliminary results concerning quasilinear equations and nonlinear potentials. In Sec. \ref{lower}, we obtain some useful estimates 
involving Wolff potentials $\wa \sigma$. 
The corresponding nonlinear integral equations \eqref{integral} and properties 
of the intrinsic Wolff potentials $\mathbf{K}_{\alpha, p} \sigma$ 
are studied in Sec. \ref{integralequation}. In Sec. \ref{thm}, we prove our main theorems regarding equation \eqref{eq1}, and discuss briefly more general quasilinear and fully nonlinear equations. In  Sec. \ref{eg}, we give a counter example which demonstrates that merely the finiteness of the embedding constants $\varkappa(\sigma_B)$ is not enough for the existence of a global solution to \eqref{eq1}, even if $\mathbf{W}_{1, p} \sigma <\infty$ a.e. Sec. \ref{gradeq} is devoted to  equations with singular gradient terms \eqref{ric-eq}.

\section{Preliminaries}\label{pre}

Let  $\Omega$ be an open set in $\R^n$, we denote by 
$M^+(\Omega)$ the class of all nonnegative locally finite Borel measures on $\Omega$. We denote the $\sigma$-measure 
of a measurable set $E\subset \Omega$  by 
$
\sigma(E)= |E|_\sigma = \int_E d \sigma.
$

For $p>0$ and $\sigma \in  M^+(\Omega)$, we denote by $L^p(\Omega, d\sigma)$ ($L_{\textrm{loc}}^p(\Omega, d\sigma)$, respectively) the space of measurable functions $f$ such that $|f|^p$ is integrable (locally integrable) with respect to $\sigma$. For $f \in L^p(\Omega, d\sigma)$, we set 
$$
||f||_{L^p(\Omega, d \sigma)} = \Big(\int_\Omega |f|^p \, d \sigma\Big)^{\frac{1}{p}}. 
$$
When $d\sigma=dx$, we write $L^p(\Omega)$ (respectively $L^p_{\textrm{loc}}(\Omega)$), and 
denote Lebesgue measure of $E\subset \R^n$ by $|E|$.  

The Sobolev space $W^{1, p}(\Omega)$ ($W^{1, p}_{\textrm{loc}}(\Omega)$, respectively) is the space of all functions $u$ such that $u\in L^p(\Omega)$ and $|\nabla u|\in L^p(\Omega)$ ($u\in L^p_{\textrm{loc}}(\Omega)$ and $|\nabla u|\in L^p_{\textrm{loc}}(\Omega)$, respectively). 

Let $L^{1, p}_{0}(\Omega)$ ($1<p<n$) denote the homogeneous Sobolev (Dirichlet) space,  
i.e., the closure of $C^\infty_0(\Omega)$ with respect to the norm 
$||u||_{1, p}=||\nabla u ||_{L^p(\Omega)}$ (see, e.g., \cite{MZ1}, Sec. 1.3.4). 

The dual Sobolev space $L^{-1, p'}(\Omega)= L^{1, p}_{0}(\Omega)^{*}$ is the  space of distributions $\nu \in D^{\prime} (\Omega)$ such that 
$$
|| \nu ||_{-1, p'} = \sup \, \frac{\left | \langle u, \nu \rangle \right |}{||u||_{1, p}} < +\infty,
$$ 
where the supremum is taken over all $u \in L^{1, p}_{0}(\Omega)$, $u \not=0$.

We will need Wolff's inequality \cite{HW} (see also \cite{AH}, Sec. 4.5) in the case 
$\Omega=\R^n$ for $\nu \in \M$:  
\begin{equation}\label{wolff-ineq}
c^{-1} \, ||\nu||^{p'}_{-1, p'} \le \int_{\R^n} \w \nu \, d \nu \le c ||\nu||^{p'}_{-1, p'},
\end{equation} 
where 
$1<p< n$, and $c$ is a positive constant which depends only on $n$ and $p$. 
There is a local version of Wolff's inequality (see \cite{AH}, Theorem 4.5.5):
\begin{equation}\label{wolff-ineq-loc}
\nu \in \M\cap W^{-1, p'}_{{\rm loc}}(\R^n) \Longleftrightarrow \int_B \w \nu_B \, d \nu_B<\infty,  \, \, \text{for all balls} \, \,  B,
  \end{equation} 
  where $B=B(x, R)$, and $\nu_B=\nu|_{B}$. 

For $u\in W^{1, p}_{\textrm{loc}}(\Omega)$, we define the $p$-Laplacian $\Delta_p$ ($1<p<\infty$) in the
distributional sense, i.e., for every $\vp \in C_0^{\infty}(\Omega)$,
\begin{equation}\label{distri}
\langle \Delta_p u,   \vp \rangle =\langle \textrm{ div}(|\nabla u|^{p-2}\nabla u, \vp \rangle =
- \int_{\Omega}|\nabla u|^{p-2}\nabla u\cdot \nabla \vp \, dx.
\end{equation}

We will extend the usual distributional definition of solutions $u$ of $-\Delta_p u = \mu$, where $\mu \in W^{-1, p'}_{\textrm{loc}}(\Omega)$, 
to $u$ not necessarily in $W^{1, p}_{\textrm{loc}}(\Omega)$. We will understand 
solutions in the following potential-theoretic sense using $p$-super-harm\-onic functions, which is equivalent to the notion of 
locally renormalized solutions in terms of test functions (see \cite{KKT}).

A function $u \in W^{1, p}_{\textrm{loc}}(\Omega)$ is called $p$-harmonic if it satisfies the homogeneous equation
$\Delta_p u=0$. Every $p$-harmonic function has a continuous representative which coincides with $u$ a.e. 
(see \cite{HKM}). Then $p$-superharmonic functions are defined via a comparison principle. We say that $u\!: \Omega \rightarrow (-\infty, \infty]$ is $p$-superharmonic if $u$ is lower semicontinuous, is not identically infinite in any component of $\Omega$, and, whenever $D\Subset \Omega$ and $h \in C( \overline{D})$ is $p$-harmonic in $D$ with $h \leq u$ on $\partial D$, then $h \leq u$ in $D$. 

A $p$-superharmonic function $u$ does not necessarily belong to $\mathrm{W}^{1, p}_{\textrm{loc}}(\Omega)$, but its truncations $T_k(u) = \min (k, \max(u, -k))$ do, for all $k>0$.   In addition, $T_k(u)$ are supersolutions, i.e., $-\textrm{ div}(|\nabla T_k(u)|^{p-2}\nabla T_k(u)) \geq 0$, in the distributional sense. The generalized gradient of a $p$-superharmonic function $u$ defined by \cite{HKM}: 
$$Du = \lim_{k\rightarrow \infty} \nabla (T_k(u)).$$

We note that every $p$-superharmonic function $u$ has a quasicontinuous representative which coincides with $u$ quasieverywhere (q.e.), i.e., 
everywhere except for a set of $p$-capacity zero (see \cite{HKM}). Here the $p$-capacity 
is defined, for compact sets $E\subset \R^n$,  by 
\begin{equation}\label{p-capacity}
{\rm cap}_{p}(E)= \inf \left \{ ||\nabla u||^p_{L^{p}(\R^n)} : \, \, u \ge 1 \, \, \text{on} \, \, E, \quad 
u \in C^{\infty}_0(\R^n)\right\}.
\end{equation}
We will assume that $u$ is always chosen to be quasicontinuous.

Let $u$ be $p$-superharmonic, and let $1\leq r < \frac{n}{n-1}$. Then $|Du|^{p-1}$, and consequently $|Du|^{p-2}Du$, 
belongs to $L^r_{\textrm{loc}}(\Omega)$ \cite{KM1}.  This allows us to define a nonnegative distribution $-\Delta_p u$ for each  $p$-superharmonic function $u$ by 
\begin{equation}
-\langle  \Delta_p u,   \vp \rangle = \int_{\Omega} |D u|^{p-2} D u\cdot \nabla \vp \, dx, 
\end{equation}
for all $\vp \in C^{\infty}_{0}(\Omega)$.  Then by the Riesz representation theorem there exists a unique measure $\mu[u] \in M^{+}(\Omega)$  so that $-\Delta_p u = \mu[u]$, where 
$\mu[u]$ is called 
the Riesz measure of $u$. 

For $\omega \in M^+(\Omega)$, consider the equation 
$$- \Delta_p  u = \omega \quad {\rm in} \, \, \Omega.$$ 
 Solutions to such equations with measure data are generally understood in the 
potential-theoretic sense (see \cite{KM1}, \cite{KM2}, \cite{K2}). 

\begin{Definition}\label{psupersense} For $\omega \in M^{+}(\Omega)$, 
$u$ is said to be a ($p$-superharmonic) solution to the equation 
\begin{equation}\label{omega-eq}
- \Delta_p  u = \omega \quad {\rm in} \, \, \Omega
\end{equation} 
 if $u$ is $p$-superharmonic in $\Omega$, and $\mu[u] = \omega$.  

Thus, if $\sigma \in \M$, then $u\ge 0$ is a solution to  the equation 
\begin{equation}\label{sigma-eq}
-\Delta_p u = \sigma u^q \quad {\rm in} \, \, \Omega
\end{equation} 
if $u$ is $p$-superharmonic in $\Omega$, 
$u \in L^q_{{\rm loc}} (\Omega, d \sigma)$, and $d \mu[u] = u^q \, d \sigma$. 
\end{Definition} 

Alternatively, we will use the framework of \textit{locally renormalized} solutions. This 
notion  introduced by Bidaut-V\'eron \cite{BV}, following the development of the theory 
of renormalized solutions in \cite{DMMOP}, is well suited for our purposes. 
As was shown recently in \cite{KKT}, for $\omega \in M^+(\Omega)$ it coincides with the notion of a $p$-superharmonic solution in Definition~\ref{psupersense}.

In particular, a $p$-superharmonic function 
$u\ge 0$ satisfying \eqref{omega-eq} 
is a locally renormalized solution 
defined in terms of test functions (see \cite{KKT}, Theorem 3.15). 
 This means that, for all $\vp \in C_0^{\infty}(\Omega)$ and $h \in W^{1, \infty}(\Omega)$ with $h'$ having compact support, we have 
\begin{equation} \label{loc-renorm}
\int_{\Omega} |Du|^{p}\, h'(u) \, \vp \, dx +  \int_{\Omega} |D u|^{p-2} D u \cdot \nabla \vp \, h(u)\, dx=  \int_{\R^n} h(u) \vp \, d \, \omega.
\end{equation}
The converse is also true, i.e., if $u$ is a locally renormalized solution to \eqref{omega-eq} 
then there exists a $p$-superharmonic representative $\tilde u = u$ a.e. 

We will call such solutions of \eqref{loc-renorm} with $d \omega= u^q d \sigma$ (locally) renormalized, $p$-superharmonic, 
or simply solutions, of \eqref{sigma-eq}.

\begin{Definition}\label{psupersol} A function $u\ge 0$ is called a (renormalized) supersolution  to \eqref{sigma-eq} if 
$u$ is $p$-superharmonic in $\Omega$, $u \in L^q_{{\rm loc}} (\Omega, d \sigma)$, and 
\begin{equation}\label{supersoleq}
\int_{\Omega} |D u|^{p-2} D u\cdot \nabla \vp \, dx \ge \int_{\Omega} u^q \vp \, d \sigma, \quad \forall \, \vp \in C_0^{\infty}(\Omega), \quad \vp \ge 0. 
\end{equation}
\end{Definition}

As we will show below, supersolutions to \eqref{eq1} in the sense of Definition~\ref{psupersol} are closely related to supersolutions associated with the integral equation 
\eqref{integral}, i.e., $u \in L^q_{{\rm loc}}(\R^n, d \sigma)$ such that  
\begin{equation} \label{integraleq}
u\ge \wa(u^q \, d\sigma) \quad d \sigma\text{-}{\rm a.e.}, 
\end{equation}
in the case $\al=1$. 
The following weak continuity result will be used to prove the existence of $p$-superharmonic solutions to quasilinear equations.
\begin{theorem}[\cite{TW1}] \label{weakconv} Suppose $\{u_j\}$  are nonnegative 
$p$-super\-harmonic functions in an open set $\Omega$ such that $u_j \to u$ a.e., where $u$ is 
$p$-superharmonic  in $\Omega$. Then $\mu[u_j]$ converges weakly to $\mu[u]$, i.e.,
 for all $\vp \in C_0^{\infty}(\Omega)$, 
$$\lim_{j \to \infty}\int_{\Omega} \vp \, d \mu[u_j] = \int_{\Omega}\vp \, d \mu[u].$$
\end{theorem}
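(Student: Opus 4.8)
The plan is to combine uniform local estimates for $p$-superharmonic functions with a nonlinear compactness argument for the generalized gradients, and then pass to the limit in the distributional representation of $\mu[u_j]$. First I would establish uniform local bounds. Fixing $B=B(x_0,2R)\Subset\Omega$: since the limit $u$ is $p$-superharmonic it is finite a.e., so by Egorov's theorem there is a set $E\subset B(x_0,R)$ of positive measure on which $u_j\to u$ uniformly with $u$ bounded, and this bounds $\operatorname{ess\,inf}_{B(x_0,R)}u_j$ uniformly in $j$ for $j$ large. The weak Harnack inequality for nonnegative $p$-superharmonic functions then gives $\sup_j\|u_j\|_{L^{s_0}(B(x_0,R))}<\infty$ for a suitable $s_0>0$, and combined with the Kilpel\"ainen--Mal\'y gradient estimates \cite{KM1} this yields $\sup_j\big\||Du_j|^{p-1}\big\|_{L^{r}(B(x_0,R))}<\infty$ for some $r\in(1,\tfrac{n}{n-1})$. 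Testing the defining identity for $\mu[u_j]$ against a cutoff then shows $\{\mu[u_j]\}$ is locally uniformly bounded in $M^+(\Omega)$, so along a subsequence $\mu[u_j]\rightharpoonup\mu$ weakly for some $\mu\in M^+(\Omega)$; it then suffices to identify $\mu=\mu[u]$, since independence of the subsequence will give the full statement.

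The heart of the argument is to prove $Du_j\to Du$ a.e.\ in $\Omega$. I would work with truncations: fix $B'\Subset\Omega$ and a level $k>0$ with $|\{u=k\}|=0$ (valid for a.e.\ $k$), and set $v_j=T_k(u_j)$, $v=T_k(u)$. The $v_j$ lie in $W^{1,p}_{\mathrm{loc}}(\Omega)$, are supersolutions, and by the Caccioppoli inequality for bounded supersolutions are bounded in $W^{1,p}(B')$ uniformly in $j$; since $u_j\to u$ a.e.\ and $T_k$ is continuous off $\{u=k\}$, one has $v_j\to v$ a.e., hence $v_j\rightharpoonup v$ in $W^{1,p}(B')$. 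Then I would run the Minty--Boccardo--Murat monotonicity argument: testing the relations for $v_j$ against $v_j-v$ cut off by a smooth function and using the elementary inequality $\big(|\xi|^{p-2}\xi-|\zeta|^{p-2}\zeta\big)\cdot(\xi-\zeta)\ge0$ together with the uniform bounds, one obtains $\int_{B''}\big(|Dv_j|^{p-2}Dv_j-|Dv|^{p-2}Dv\big)\cdot(Dv_j-Dv)\,dx\to0$ on $B''\Subset B'$, which forces $Dv_j\to Dv$ a.e.\ on $B''$. Since $Du=Dv$ a.e.\ on $\{u<k\}$ and $Du_j=Dv_j$ a.e.\ on $\{u_j<k\}$, and the sets $\{u_j<k\}$ exhaust $\{u<k\}$ up to a null set as $j\to\infty$, letting $k\to\infty$ through admissible levels yields $Du_j\to Du$ a.e.\ in $\Omega$.

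Finally, from the a.e.\ convergence $|Du_j|^{p-2}Du_j\to|Du|^{p-2}Du$ and the uniform bound $\sup_j\||Du_j|^{p-1}\|_{L^r_{\mathrm{loc}}}<\infty$ with $r>1$ (which provides local equi-integrability), Vitali's convergence theorem gives $|Du_j|^{p-2}Du_j\to|Du|^{p-2}Du$ in $L^1_{\mathrm{loc}}(\Omega)$. Testing against any $\vp\in C_0^\infty(\Omega)$ and using the distributional definition of $\mu[u_j]$ and $\mu[u]$ then gives $\int_\Omega\vp\,d\mu[u_j]\to\int_\Omega\vp\,d\mu[u]$, so $\mu=\mu[u]$ and the proof is complete.

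I expect the a.e.\ convergence of the generalized gradients to be the main obstacle. Because $\Delta_p$ is nonlinear, weak $W^{1,p}_{\mathrm{loc}}$ convergence of the truncations does not pass to the principal part on its own, and the monotonicity/compensated-compactness step must be set up with care: the natural test function $v_j-v$ is not admissible as it stands and must be localized and further truncated, the error terms generated near the level sets $\{u_j\approx k\}$ have to be controlled using only the uniform estimates of Step 1, and one must manage the limit $k\to\infty$ so as to recover the generalized gradient $Du$ of the possibly unbounded $p$-superharmonic function $u$.
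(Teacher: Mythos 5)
This statement is not proved in the paper at all: it is quoted verbatim from Trudinger--Wang \cite{TW1} (Theorem~\ref{weakconv} is used as a black box), so there is no in-paper argument to compare against. Your sketch reconstructs what is essentially the standard proof from the literature -- local uniform bounds via the weak Harnack inequality and the Kilpel\"ainen--Mal\'y gradient estimates, a.e.\ convergence of the generalized gradients via truncation and a Boccardo--Murat monotonicity argument, and then Vitali's theorem to pass to the limit in the distributional identity -- and this route is sound; you have also correctly identified the a.e.\ gradient convergence as the genuine crux. Two points of precision: first, the intermediate claim that $\int_{B''}\bigl(|Dv_j|^{p-2}Dv_j-|Dv|^{p-2}Dv\bigr)\cdot(Dv_j-Dv)\,dx\to0$ is stronger than what the monotonicity argument actually delivers, because the measures $-\Delta_p v_j\ge 0$ may charge Lebesgue-null sets, so $v_j\to v$ a.e.\ (Lebesgue) does not make the right-hand term $\int (v_j-v)\eta\,d\nu_j$ vanish; the correct device is to test with $T_\epsilon(v_j-v)\eta$, which bounds that term by $\epsilon\sup_j\nu_j(\operatorname{supp}\eta)$, and then run the splitting over $\{|v_j-v|<\epsilon\}$ with the exponent-$\theta$ ($0<\theta<1$) trick, which yields $Dv_j\to Dv$ a.e.\ along a subsequence even though the full (power-one) integral need not tend to zero. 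You flag exactly this need for further truncation, so it is an imprecision of formulation rather than a gap. Second, in the Harnack step the Egorov set should be placed inside the smaller ball on which the essential infimum is taken (or one should chain balls), since the weak Harnack inequality controls the $L^{s_0}$ average by the infimum on a concentric subball; with that adjustment, and a diagonal choice of truncation levels $k$ when patching the a.e.\ convergence of $Du_j$, the argument goes through and indeed reproves the cited result.
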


The next theorem is concerned with pointwise estimates of nonnegative $p$-superhamonic functions in terms of Wolff potentials.

\begin{theorem}[\cite{KM2}] \label{thmpotest} Let $1<p< \infty$, and let $u$ be a $p$-superharmonic function in $\R^n$ with $\liminf_{x \to \infty} u=0$. 

(i) If $p< n$ and $\omega=\mu[u]$, then 
\begin{equation}\label{wolff-K}
 \frac{1}{K} \, \w\omega (x) \le u(x) \le K \, \w\omega(x), \quad x \in \R^n,
 \end{equation}
where $K$ is a positive constant depending only on $n$ and  $p$. 

(ii) In the case $p\ge n$, it follows that $u\equiv 0$. 
\end{theorem}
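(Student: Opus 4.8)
The plan is to obtain this global estimate — the Kilpel\"{a}inen--Mal\'{y} Wolff potential bound on $\R^n$ — from its well-known scale-invariant \emph{local} form together with the behaviour of $u$ at infinity. Recall that if $w\ge0$ is $p$-superharmonic in a ball $B(x,2\rho)$ with Riesz measure $\nu=-\Delta_p w$, then
\[
c_1\,\mathbf{W}_{1,p}^{\rho}\nu(x)\ \le\ w(x)\ \le\ c_2\Bigl(\inf_{B(x,\rho)}w+\mathbf{W}_{1,p}^{2\rho}\nu(x)\Bigr),
\]
where $\mathbf{W}_{1,p}^{\rho}\nu(x)=\int_0^{\rho}\bigl(\nu(B(x,t))/t^{n-p}\bigr)^{1/(p-1)}\,dt/t$, and the constants $c_1,c_2$ depend only on $n,p$ and on nothing else — in particular not on $\rho$; this holds for all $1<p<\infty$ \cite{KM1,KM2}. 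I will also use the minimum principle and the scale-invariant Harnack inequality for $p$-(super)harmonic functions, and the fact that $u\not\equiv+\infty$. A preliminary step is to note that, reading the hypothesis as $\inf_{|x|>R}u\to0$ as $R\to\infty$, for each $\varepsilon>0$ there is $R_\varepsilon$ with $u>-\varepsilon$ on $\{|x|>R_\varepsilon\}$; the minimum principle on $B(0,R)$ with $R>R_\varepsilon$ then gives $\inf_{B(0,R)}u\ge-\varepsilon$, whence $u\ge0$ on $\R^n$ and $\inf_{\R^n}u=0$.

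For part (i), fix $x_0$ and apply the local estimate on $B(x_0,2\rho)$ with $\nu=\omega|_{B(x_0,2\rho)}$; since the truncated potential at height $\rho$ involves only $\omega(B(x_0,t))$ for $t\le\rho$, its lower bound reads $u(x_0)\ge c_1\mathbf{W}_{1,p}^{\rho}\omega(x_0)$, and letting $\rho\to\infty$ gives $u(x_0)\ge c_1\,\w\omega(x_0)$ by monotone convergence. For the matching upper bound, $u(x_0)\le c_2\bigl(\inf_{B(x_0,\rho)}u+\mathbf{W}_{1,p}^{2\rho}\omega(x_0)\bigr)$; as $\rho\to\infty$ the first term decreases to $\inf_{\R^n}u=0$ while the second increases to $\w\omega(x_0)$, so $u(x_0)\le c_2\,\w\omega(x_0)$. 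Taking $K=\max(c_2,1/c_1)$ yields \eqref{wolff-K}.

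For part (ii), with $p\ge n$, I would first show that $\omega:=\mu[u]$ must be zero. If not, choose a ball $B(x_1,r_1)$ with $\omega(B(x_1,r_1))=m>0$; then $\omega(B(x,t))\ge m$ for $t\ge|x-x_1|+r_1$ and every $x$, so the lower bound from part (i) gives, for every $x$,
\[
u(x)\ \ge\ c_1\,m^{1/(p-1)}\int_{|x-x_1|+r_1}^{\infty}t^{\frac{p-n}{p-1}-1}\,dt\ =\ +\infty,
\]
because the exponent $\tfrac{p-n}{p-1}-1$ is $\ge-1$ precisely when $p\ge n$; this contradicts $u\not\equiv+\infty$. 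Hence $u$ is $p$-harmonic and, by the preliminary step, nonnegative on $\R^n$. The scale-invariant Harnack inequality then gives $\sup_{B(0,1)}u\le C(n,p)\inf_{B(0,R)}u$ for all $R\ge1$, and the right-hand side tends to $C(n,p)\inf_{\R^n}u=0$; thus $u\equiv0$ on $B(0,1)$, and the same argument on arbitrary balls forces $u\equiv0$.

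I expect the main obstacle to be the uniformity of $c_1,c_2$ in the local estimate with respect to the radius $\rho$: it is exactly this scale invariance that legitimizes passing from truncated to full Wolff potentials as $\rho\to\infty$, and it is the substantive input borrowed from \cite{KM1,KM2}. The other delicate point is justifying $\inf_{B(x_0,\rho)}u\to0$ in the upper bound, which is where the preliminary step — the minimum principle together with the precise reading of $\liminf_{x\to\infty}u=0$ as $\inf_{|x|>R}u\to0$ — is essential; without the resulting nonnegativity of $u$ the upper estimate would not close.
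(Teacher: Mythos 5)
The paper gives no proof of this statement: Theorem~\ref{thmpotest} is imported verbatim from Kilpel\"{a}inen--Mal\'{y} \cite{KM2} (see also the discussion around \eqref{Uest}), so there is nothing internal to compare your argument against. That said, your reconstruction is correct and is essentially the standard way the global estimate is extracted from the scale-invariant local one: the preliminary reduction to $u\ge 0$ via the comparison principle on large balls (using that $\liminf_{x\to\infty}u=\lim_{R\to\infty}\inf_{|x|>R}u$), the identification of the truncated potential of $\omega|_{B(x_0,2\rho)}$ with that of $\omega$ at scales $t\le\rho$, and the limit $\rho\to\infty$ killing the term $\inf_{B(x_0,\rho)}u$ are all sound, and the uniformity of $c_1,c_2$ in $\rho$ is indeed the one substantive input you must borrow from \cite{KM1,KM2}. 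Your part (ii) is also correct: the exponent computation shows the truncated lower bound diverges for any nontrivial $\omega$ when $p\ge n$, forcing $\omega=0$. One small simplification worth noting there: once $\omega=0$, the local upper estimate itself reads $u(x_0)\le c_2\inf_{B(x_0,\rho)}u$ for every $\rho$, which is already a Harnack-type inequality for $u$; letting $\rho\to\infty$ gives $u(x_0)\le c_2\inf_{\R^n}u=0$ directly, so you need not first argue that a $p$-superharmonic function with vanishing Riesz measure is $p$-harmonic in order to invoke the Harnack inequality for $p$-harmonic functions.
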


\section{Wolff potential estimates}\label{lower}

We start with some useful estimates for Wolff potentials. Throughout this paper we will assume that $\sigma \in \M$, i.e., $\sigma$ is a locally finite Borel measure on $\R^n$, 
and $\sigma \not= 0$. 

\begin{lemma}\label{average-wolff}  Suppose  $1<p<\infty$,   $0<\al <\frac n p$, and $\sigma \in M^+(\R^n)$. 
Let $s = \min \, (1, p-1)$. Then there exists a positive constant $c$ which depends only on $n$, $p$, and $\al$ 
such that, for all $x \in \R^n$ and $R>0$, 
\begin{equation}\label{average} 
 \begin{aligned}
c^{-1}  &
\int_R^\infty   \left(\frac{\sigma(B(x, r))}{r^{n-\al p}}\right)^{\frac{1}{p-1}}\frac{dr}{r} 
\\
 \le & \inf_{B(x, R)} \, \wa \sigma 
\le \left ( \frac{1} {|B(x, R)|} \int_{B(x, R)} \left[\wa \sigma(y)\right]^s \, dy 
\right)^{\frac 1 s} \\ 
\le & \,  c   \int_R^\infty   \left(\frac{\sigma(B(x, r))}{r^{n-\al p}}\right)^{\frac{1}{p-1}}\frac{dr}{r}.  
\end{aligned}
\end{equation}
\end{lemma}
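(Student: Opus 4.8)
The plan is to prove the chain of three inequalities in \eqref{average} by estimating the Wolff potential both from below at a worst point of $B(x,R)$ and from above in an $L^s$-averaged sense, all in terms of the tail integral $T(x,R) := \int_R^\infty \left(\frac{\sigma(B(x,r))}{r^{n-\al p}}\right)^{\frac{1}{p-1}}\frac{dr}{r}$.

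First I would establish the \emph{lower} bound $\inf_{B(x,R)} \wa\sigma \ge c^{-1} T(x,R)$. Fix $y \in B(x,R)$. For $r \ge 2R$ we have $B(x,r) \subset B(y, 2r)$, hence $\sigma(B(x,r)) \le \sigma(B(y,2r))$, and therefore
\begin{equation*}
\wa\sigma(y) \ge \int_{2R}^\infty \left(\frac{\sigma(B(y,\rho))}{\rho^{n-\al p}}\right)^{\frac{1}{p-1}}\frac{d\rho}{\rho} \ge c \int_{4R}^\infty \left(\frac{\sigma(B(x,r))}{r^{n-\al p}}\right)^{\frac{1}{p-1}}\frac{dr}{r},
\end{equation*}
after the substitution $\rho = 2r$ and absorbing the fixed power of $2$ into $c$. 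To replace the lower limit $4R$ by $R$ one uses that on the remaining annulus $R \le r \le 4R$ the integrand is comparable to its values at scales $\sim R$, which are in turn dominated by the $\rho \sim R$ piece already captured (monotonicity of $r \mapsto \sigma(B(x,r))$ together with the bounded ratio of radii makes this routine). This gives the first inequality.

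The \emph{middle} inequality is trivial: $\inf_{B} f \le \big(\frac{1}{|B|}\int_B f^s\big)^{1/s}$ for any nonnegative $f$ and $s>0$. So the substance is the \emph{upper} bound
\begin{equation*}
\left(\frac{1}{|B(x,R)|}\int_{B(x,R)} \left[\wa\sigma(y)\right]^s dy\right)^{1/s} \le c\, T(x,R).
\end{equation*}
I would split $\wa\sigma(y) = \int_0^{2R} + \int_{2R}^\infty =: A(y) + B(y)$. For the tail part $B(y)$, when $y \in B(x,R)$ and $\rho \ge 2R$ we have $B(y,\rho) \subset B(x,2\rho)$, so $B(y) \le c\, T(x,R)$ pointwise, which handles it directly. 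The local part $A(y)$ is where one must integrate: I would use Minkowski's inequality (when $s \ge 1$, i.e.\ $p \ge 2$) or the elementary inequality $(a+b)^s \le a^s + b^s$ together with Fubini (when $s = p-1 < 1$, i.e.\ $1<p<2$) to pull the $dy$-average inside the $d\rho$-integral, reducing matters to controlling, for each fixed $\rho \in (0,2R)$,
\begin{equation*}
\frac{1}{|B(x,R)|}\int_{B(x,R)} \left(\frac{\sigma(B(y,\rho))}{\rho^{n-\al p}}\right)^{\frac{s}{p-1}} dy.
\end{equation*}
By Fubini, $\int_{B(x,R)} \sigma(B(y,\rho))\,dy = \int \sigma(\{|y-z|<\rho\} \cap B(x,R))\,d\sigma(z) \le c\,\rho^n\, \sigma(B(x,3R))$ (each $z$ within distance $\rho$ of $B(x,R)$ lies in $B(x,3R)$ since $\rho \le 2R$). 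When $s/(p-1) = 1$ (the case $p \le 2$) this is immediate; when $s/(p-1) < 1$ (the case $p > 2$) one first applies Jensen's inequality in $y$ to reduce to the same integral. Either way one gets a bound $\le c\,(\sigma(B(x,3R))/R^{n-\al p})^{s/(p-1)}$ after carrying out the elementary $\int_0^{2R}\rho^{\al p \cdot s/(p-1)-1}\,d\rho$-type integration, and finally $\big(\sigma(B(x,3R))/R^{n-\al p}\big)^{1/(p-1)} \le c\,T(x,R)$ since the constant function equals a lower Riemann sum for the tail integral over $[3R, 6R]$ (monotonicity again).

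The main obstacle is the case $1 < p < 2$, where $p-1 < 1$ forces the use of the sublinear exponent $s = p-1$ and one cannot use Minkowski's inequality; instead one relies on $(a+b)^{p-1} \le a^{p-1}+b^{p-1}$ and must be careful that all exponent bookkeeping (the interplay of $\frac{1}{p-1}$, $s$, and $\frac{s}{p-1}=1$) remains consistent. A secondary technical point is handling the dilation of radii ($R \leftrightarrow 3R \leftrightarrow 4R$ etc.) uniformly and absorbing all such structural constants into the final $c = c(n,p,\al)$; this is bookkeeping rather than a real difficulty, but it is where most of the (suppressed) routine computation lives.
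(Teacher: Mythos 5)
Your architecture is the same as the paper's: split $\wa\sigma(y)$ at scale $\sim R$, bound the tail pointwise via the inclusion $B(y,\rho)\subset B(x,2\rho)$, handle the local part by averaging in $y$ with Fubini ($\int_{B(x,R)}\sigma(B(y,\rho))\,dy\lesssim \rho^n\,\sigma(B(x,3R))$) plus Jensen when $p>2$, and finish with the monotonicity observation $\bigl(\sigma(B(x,3R))/R^{n-\al p}\bigr)^{1/(p-1)}\le c\int_R^\infty\bigl(\sigma(B(x,r))/r^{n-\al p}\bigr)^{1/(p-1)}\,dr/r$; the lower bound and the case $p\ge 2$ are fine as you describe (in fact the substitution in the lower bound gives the limit $R$ directly, so your $4R$-patch is unnecessary but harmless).

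The gap is exactly at the point you call the main obstacle, $1<p<2$. There, "pull the $dy$-average inside the $d\rho$-integral using $(a+b)^s\le a^s+b^s$ together with Fubini" amounts to claiming, pointwise in $y$, that $\left(\int_0^{2R}\bigl(\sigma(B(y,\rho))\rho^{\al p-n}\bigr)^{\frac{1}{p-1}}\frac{d\rho}{\rho}\right)^{p-1}\le c\int_0^{cR}\sigma(B(y,\rho))\,\rho^{\al p-n}\,\frac{d\rho}{\rho}$, i.e.\ the continuum analogue $\left(\int g\,d\mu\right)^{s}\le\int g^{s}\,d\mu$ with $s=p-1<1$. That inequality is false for general $g,\mu$ (take $g\equiv 1$ and $\mu$ of small total mass), so subadditivity alone does not justify the exchange. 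The inequality you need is true, but only because $\rho\mapsto\sigma(B(y,\rho))$ is non-decreasing and the weight is a power: decompose $(0,2R)$ into dyadic intervals, bound each block by its endpoint value, apply $(\sum a_j)^{p-1}\le\sum a_j^{p-1}$ to the resulting sum, and then use monotonicity of $\sigma(B(y,\cdot))$ to convert the dyadic sum back into an integral over $(0,4R)$. This is precisely the elementary inequality the paper isolates for non-decreasing $\phi$, namely $\left(\int_0^R(\phi(r)/r^\gamma)^{1/(p-1)}\,\frac{dr}{r}\right)^{p-1}\le c(p,\gamma)\int_0^{2R}\frac{\phi(r)}{r^\gamma}\,\frac{dr}{r}$ with $\phi(r)=\sigma(B(y,r))$ (or $\sigma(B(0,2r))$) and $\gamma=n-\al p$, applied before Fubini in $y$. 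Once this lemma is stated and proved, your exponent bookkeeping ($s/(p-1)=1$), the Fubini bound, the $\rho$-integration, and the final tail comparison all go through and reproduce the paper's proof; without it, the $1<p<2$ case is not established.
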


\begin{proof} Without loss of generality we can assume that  $x=0$. We first prove the last estimate in \eqref{average}. Clearly, 
\begin{equation*} 
\frac{1} {|B(0, R)|}   \int_{B(0, R)} \left[\wa \sigma(y)\right]^s \,  dy  \le I_1 + I_2,
\end{equation*}
where 
\begin{equation*} 
 \begin{aligned}
I_1& = \frac{1} {|B(0, R)|}   \int_{B(0, R)}  \left ( \int_0^R  \left(\frac{\sigma(B(y, r))}{r^{n-\al p}}\right)^{\frac{1}{p-1}} 
  \frac{dr}{r} \right)^s dy, \\ 
  I_2 & = \frac{1} {|B(0, R)|}   \int_{B(0, R)}  \left ( \int_R^\infty  \left(\frac{\sigma(B(y, r))}{r^{n-\al p}}\right)^{\frac{1}{p-1}} 
  \frac{dr}{r} \right)^s dy. 
\end{aligned}
\end{equation*}
To estimate $I_2$, notice that since $B(y, r) \subset B(0, 2r)$ for $y \in B(0, R)$ and $r>R$,  
it follows 
$$
I_2 \le  
\left (\int_R^\infty   \left(\frac{\sigma(B(0, 2r))}{r^{n-\al p}}\right)^{\frac{1}{p-1}}\frac{dr}{r} \right)^s.
$$
To estimate  $I_1$, suppose first that $p \ge 2$ so that $s=1$. Then using Fubini's theorem and 
Jensen's inequality we deduce 
$$ 
I_1 \le  \int_0^R   \left (  \frac{1} {|B(0, R)|} \int_{B(0, R)}   \sigma(B(y, r)) \,  dy \right)^{\frac 1{p-1}} 
 \frac{dr}{r^{\frac{n-\al p}{p-1}+1}}. 
$$ 
Using Fubini's theorem again, we obtain 
$$
 \int_{B(0, R)}   \sigma(B(y, r)) \, dy \le  \int_{B(0, 2R)} |B(y, r)| \, d \sigma = |B(0,1)| \, r^n \, \sigma (B(0, 2R)). 
$$ 
Hence, there is a constant $c=c(n, p, \al)$ such that  
\begin{equation*} 
 \begin{aligned}
I_1 & \le c R^{-\frac{n}{p-1}}\sigma(B(0,2R))^{\frac 1{p-1}} \int_0^R r^{\frac{\al p}{p-1}-1} dr
\\
 &= c R^{\frac{\al p-n}{p-1}} \sigma(B(0,2R))^{\frac 1{p-1}} \le c  \int_R^\infty  
 \left(\frac{\sigma(B(0, 2r))}{r^{n-\al p}}\right)^{\frac{1}{p-1}}\frac{dr}{r}.  
\end{aligned}
\end{equation*}
Notice that this is the same estimate we have deduced above for $I_2$ with $s=1$. 

Let us now estimate $I_1$ for  $1<p<2$ and $s=p-1$. In this case, we will use  
the following elementary inequality: for every $R>0$,  
$$
\left (\int_0^R \left (\frac{\phi({r})}{r^\gamma}\right)^{\frac 1{p-1}} \frac{dr}{r} \right )^{p-1} \le c(p, \gamma) 
 \int_0^{2R} \frac{\phi({r})}{r^\gamma}  \frac{dr}{r}, 
$$
where $\gamma>0$, $1<p<2$, and $\phi$ is a non-decreasing function on $(0, \infty)$. 

Applying the preceding inequality with $\phi ({r})= \sigma(B(0, 2r))$ and $\gamma=n-\al p$, 
and estimating as in the case $p \ge 2$, using Fubini's theorem again, we obtain: 
 \begin{equation*}
 \begin{aligned}
I_1 & \le \frac{c} {|B(0, R)|}   \int_{B(0, R)}   \int_0^{2R}  \frac{\sigma(B(y, r))}{r^{n-\al p}} 
  \frac{dr}{r} dy  \\
  &  \le c \,  R^{-n} \sigma(B(0, 2R)) \int_0^{2R} r^{\al p-1} dr
    = 
    c \, R^{-n+\al p} \sigma(B(0, 2R)) \\
    & \le  c \left (    \int_R^\infty   \left(\frac{\sigma(B(0, 2r))}{r^{n-\al p}}\right)^{\frac{1}{p-1}}\frac{dr}{r}  \right)^{p-1},  
      \end{aligned}
 \end{equation*}
 where $c$ denotes different constants depending only on $n$, $p$, $\al$. 
Combining the estimates for $I_1$ and $I_2$, we arrive at 
$$
 \frac{1} {|B(0, R)|} \int_{B(0, R)} (\wa \sigma)^s \, dy \le c  
\left (\int_R^\infty   \left(\frac{\sigma(B(0, 2r))}{r^{n-\al p}}\right)^{\frac{1}{p-1}}\frac{dr}{r} \right)^s.
$$ 
Making the substitution $\rho=2r$ in the integral on the right-hand side completes the proof of the upper estimate 
in \eqref{average}. 

To prove the lower estimate,  notice that 
$$
\wa \sigma (y) \ge \int_{2R}^\infty  \left(\frac{\sigma(B(y, r))}{r^{n-\al p}}\right)^{\frac{1}{p-1}} 
  \frac{dr}{r} = c  \int_{R}^\infty  \left(\frac{\sigma(B(y, 2 \rho))}{\rho^{n-\al p}}\right)^{\frac{1}{p-1}} 
  \frac{d\rho}{\rho}. 
$$
Since $B(y, 2 \rho) \supset B(0, \rho)$ for $y \in B(0,R)$ and $\rho>R$, there exists $c=c(n, p, \al)>0$ such that 
$$
\inf_{B(0, R)} \, \wa \sigma \ge c  \int_{R}^\infty  \left(\frac{\sigma(B(0, \rho))}{\rho^{n-\al p}}\right)^{\frac{1}{p-1}} 
  \frac{d\rho}{\rho}.
$$ 
\end{proof} 

\begin{Corollary}\label{inf} Suppose  $1<p<\infty$,   $0<\al <\frac n p$, and $\sigma \in M^+(\R^n)$.

(i) $\wa \sigma \not\equiv+\infty$ if and only if 
\begin{equation}\label{wolff01}
 \int_{1}^\infty  \left(\frac{\sigma(B(0, r))}{r^{n-\al p}}\right)^{\frac{1}{p-1}} 
  \frac{dr}{r} < \infty.
  \end{equation} 
  
  (ii) Condition \eqref{wolff01} implies 
  \begin{equation}\label{wolff02}
 \int_{t}^\infty  \left(\frac{\sigma(B(x, r))}{r^{n-\al p}}\right)^{\frac{1}{p-1}} 
  \frac{dr}{r} < \infty, \quad \forall x \in \R^n, \, t>0. 
  \end{equation}

(iii) If \eqref{wolff01} holds, then  $\wa \sigma \in L^s_{{\rm loc}} (dx)$, where $s=\min \, (1, p-1)$, and 
\begin{equation}\label{lim-inf}
\liminf_{|x|\to \infty} \wa \sigma (x) =0.
  \end{equation} 
\end{Corollary}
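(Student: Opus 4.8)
The plan is to derive all three parts directly from Lemma~\ref{average-wolff}, which gives two-sided control of averages of $\wa\sigma$ over balls $B(0,R)$ by the tail integral $\int_R^\infty (\sigma(B(0,r))/r^{n-\al p})^{1/(p-1)}\,dr/r$. For part (i): if \eqref{wolff01} holds, then the upper estimate in \eqref{average} with $x=0$ shows $\int_{B(0,R)}(\wa\sigma)^s\,dy<\infty$ for every $R$, hence $\wa\sigma<\infty$ a.e.\ on $\R^n$ (in particular $\not\equiv+\infty$); conversely, if \eqref{wolff01} fails, the lower estimate $\inf_{B(0,R)}\wa\sigma\ge c\int_R^\infty(\sigma(B(0,r))/r^{n-\al p})^{1/(p-1)}\,dr/r=+\infty$ forces $\wa\sigma\equiv+\infty$ on every ball $B(0,R)$, hence on all of $\R^n$. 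This also immediately yields the $L^s_{\mathrm{loc}}$ claim in part (iii).

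For part (ii): this is a routine translation-and-doubling argument. Fix $x\in\R^n$ and $t>0$. Choose $R$ large enough that $B(0,R)\supset B(x,t)$ is comfortable; for $r$ large, $B(x,r)\subset B(0,\,r+|x|)\subset B(0,2r)$, so $\sigma(B(x,r))\le\sigma(B(0,2r))$, and a change of variables $\rho=2r$ reduces the tail integral over $(t,\infty)$ centered at $x$ to a constant times the tail integral over $(t',\infty)$ centered at $0$, which is finite by \eqref{wolff01} (the finiteness of $\int_{t'}^\infty$ for any $t'>0$ being equivalent to \eqref{wolff01}, since the integrand is locally integrable near any finite positive radius as $\sigma$ is locally finite). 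In fact part (ii) is essentially contained in the first inequality of \eqref{average} applied at center $x$: the left side of \eqref{average} bounds $c^{-1}\int_R^\infty(\sigma(B(x,r))/r^{n-\al p})^{1/(p-1)}\,dr/r$ by $\inf_{B(x,R)}\wa\sigma$, which is finite once we know $\wa\sigma\not\equiv+\infty$ near $x$ — and that follows from part (i) together with the observation that $\wa\sigma<\infty$ a.e.

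For part (iii), the remaining assertion is \eqref{lim-inf}. Here I would argue by contradiction: if $\liminf_{|x|\to\infty}\wa\sigma(x)=\delta>0$, then $\wa\sigma(x)\ge\delta/2$ for all $|x|\ge R_0$, so $\frac{1}{|B(0,R)|}\int_{B(0,R)}(\wa\sigma)^s\,dy\ge c\,\delta^s$ uniformly in large $R$; but the upper estimate in \eqref{average} shows this average is bounded by $c\big(\int_R^\infty(\sigma(B(0,r))/r^{n-\al p})^{1/(p-1)}\,dr/r\big)^s$, which tends to $0$ as $R\to\infty$ by \eqref{wolff01} (the tail of a convergent integral), a contradiction. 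The mild subtlety — the one place to be slightly careful — is that $\inf_{B(0,R)}\wa\sigma$ need not itself tend to $0$ (only a $\liminf$ statement is claimed), so one genuinely needs the averaged upper bound rather than a pointwise one; but since $s>0$ and averages dominate infima only trivially in the wrong direction, the comparison of the \emph{average} going to $0$ with the hypothetical uniform lower bound $\delta/2$ on a set of full measure in the annulus is what does the job. No step here is a serious obstacle; the only thing requiring attention is keeping track of the doubling constants and the reduction of off-center tails to center-$0$ tails in part (ii).
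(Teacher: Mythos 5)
Your proposal is correct and follows essentially the same route as the paper: all three parts are read off from Lemma~\ref{average-wolff}, with the doubling inclusions $B(x,r)\subset B(0,2r)$ (plus local finiteness of $\sigma$ on the bounded range of radii) handling part (ii), and the $L^s$-average upper bound over large balls forcing the $\liminf$ in \eqref{lim-inf} to vanish. Your contradiction argument over $\{R_0\le |x|<R\}$ is just a cosmetic variant of the paper's direct estimate of $\inf_{A_R}\wa\sigma$ over the annuli $A_R=\{R/2<|x|<R\}$, so no substantive difference.
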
 
\begin{proof} We first verify statement  (ii). Suppose \eqref{wolff01} holds. We may assume  $x\not =0$, since for $x=0$ 
  \eqref{wolff02}  is obvious. Clearly, $B(x, r)\subset B(0, 2r)$ for $|x|<r$, and hence,
$$
I_{x}:=\int_{|x|}^\infty  \left(\frac{\sigma(B(x, r))}{r^{n-\al p}}\right)^{\frac{1}{p-1}} 
  \frac{dr}{r} \le \int_{|x|}^\infty  \left(\frac{\sigma(B(0, 2r))}{r^{n-\al p}}\right)^{\frac{1}{p-1}} 
  \frac{dr}{r} < \infty.
$$
It follows that  \eqref{wolff02} holds for $t\ge |x|$. If $t<|x|$, then 
$$
\int_{t}^\infty  \left(\frac{\sigma(B(x, r))}{r^{n-\al p}}\right)^{\frac{1}{p-1}} 
  \frac{dr}{r}=\int_{t}^{|x|}  \left(\frac{\sigma(B(x, r))}{r^{n-\al p}}\right)^{\frac{1}{p-1}} 
  \frac{dr}{r} + I_{x} < \infty, 
$$
since in the first integral $B(x, r) \subset B(0, 2 |x|)$. Thus, \eqref{wolff02} holds for all $x$ and $t>0$.

It remains to prove \eqref{lim-inf}, since the other statements of 
Corollary~\ref{inf} are immediate from \eqref{average} and \eqref{wolff02}. Suppose that \eqref{wolff01} holds. 
For $R>0$, let $A_R=\{\frac{R}{2}<|x|<R\}$. Then by the upper estimate of Lemma~\ref{average-wolff} (with $x=0$), 
\begin{equation*}
\begin{aligned}
\inf_{|x|> R/2} \,  \wa \sigma(x) & \le\inf_{A_R} \, \wa \sigma(x) \le  \left( \frac{1} {|A_R|}   \int_{A_R} (\wa \sigma)^s \,  dx\right)^{\frac{1}{s}}\\ 
&\le c \, \int_R^\infty   \left(\frac{\sigma(B(0, r))}{r^{n-\al p}}\right)^{\frac{1}{p-1}}\frac{dr}{r},
\end{aligned}
\end{equation*} 
where $c$ does not depend on $R$. Since the right-hand side of the preceding inequality tends to zero as $R\to \infty$, we 
see that  \eqref{lim-inf} holds. 
\end{proof} 

It is easy to see that if $\omega \in M^+(\R^n)$, and $u \in W^{1, p}_{{\rm loc}}(\R^n)$ is a weak solution to the equation 
$-\Delta_p u = \omega$, then  $\omega \in W^{-1, p'}_{{\rm loc}}(\R^n)$. 
The converse statement is less obvious, and we were not able to find it 
in the literature. In the next lemma, for the sake of completeness,  we give a proof 
in the case  $\omega \ge 0$ using a series of Caccioppoli-type inequalities. 

\begin{lemma}\label{loc-sol}  Suppose  $1<p< n$,  and $\omega \in M^+(\R^n)\cap W^{-1, p'}_{{\rm loc}}(\R^n)$. If $u\ge 0$ is a $p$-superharmonic solution to the equation $-\Delta_p u = \omega$ in $\R^n$ 
such that $\liminf_{x\to \infty} u =0$, then $u \in W^{1, p}_{{\rm loc}}(\R^n) \cap L^{1}_{{\rm loc}}(\R^n, d \omega)$.
\end{lemma}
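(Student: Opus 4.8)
The plan is to show that the generalized gradient $Du$ of the $p$-superharmonic function $u$ actually belongs to $L^p_{\mathrm{loc}}(\R^n)$, whence $u \in W^{1,p}_{\mathrm{loc}}(\R^n)$ by the standard identification of $Du$ with $\nabla u$ in the Sobolev setting, and then the inclusion $u \in L^1_{\mathrm{loc}}(d\omega)$ will follow by testing the equation against a cutoff. The central mechanism is a Caccioppoli-type estimate: for a fixed ball $B = B(x_0, R)$, choose a cutoff $\eta \in C_0^\infty(B(x_0,2R))$ with $\eta \equiv 1$ on $B$, $0 \le \eta \le 1$. Since the truncations $T_k(u)$ lie in $W^{1,p}_{\mathrm{loc}}$ and are supersolutions, and since $\liminf_{x\to\infty} u = 0$ together with Theorem~\ref{thmpotest} gives $u \le K\,\mathbf{W}_{1,p}\omega$, which by Corollary~\ref{inf}(iii) is finite a.e. and locally in $L^s$, I would test the equation $-\Delta_p u = \omega$ (in the renormalized sense \eqref{loc-renorm}, or directly in the distributional sense valid for $|Du|^{p-2}Du \in L^r_{\mathrm{loc}}$, $r < \tfrac{n}{n-1}$) with $\varphi = \eta^p\, T_k(u)$ or a similar admissible test function built from truncations. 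This yields, after absorbing via Young's inequality,
\begin{equation*}
\int_{B} |D T_k(u)|^p \, \eta^p \, dx \le C \int_{B(x_0,2R)} |u|^p \, |\nabla \eta|^p \, dx + C \int_{B(x_0,2R)} \eta^p \, T_k(u)\, d\omega,
\end{equation*}
with $C$ independent of $k$.

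The right-hand side must now be controlled uniformly in $k$. For the first term, the pointwise bound $u \le K\,\mathbf{W}_{1,p}\omega$ and the local integrability of $\mathbf{W}_{1,p}\omega$ from Corollary~\ref{inf}(iii) handle the case $p \ge 2$ directly; for $1 < p < 2$ one has only $\mathbf{W}_{1,p}\omega \in L^{p-1}_{\mathrm{loc}}$, so I would first run the argument with a test function producing $\int |DT_k(u)|^{p} \eta^p$ balanced against $\int u^{p-1}|\nabla\eta|\cdots$, or more simply iterate: use a Caccioppoli inequality with a lower power of $u$ (exploiting that $u^{s} \in L^1_{\mathrm{loc}}$) to bootstrap integrability of $u$ itself before running the final estimate. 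For the measure term, the hypothesis $\omega \in W^{-1,p'}_{\mathrm{loc}}(\R^n)$ is exactly what is needed: pairing $\omega$ against $\eta^p T_k(u) \in L^{1,p}_0$ gives $\int \eta^p T_k(u)\, d\omega \le \|\omega\|_{-1,p'} \, \|\nabla(\eta^p T_k(u))\|_{L^p}$, and the gradient on the right expands into the same $\int |DT_k(u)|^p \eta^p$ term (plus lower-order pieces), which can be absorbed into the left-hand side for a small enough coefficient. Letting $k \to \infty$ via Fatou and the definition $Du = \lim_k \nabla T_k(u)$ gives $\int_B |Du|^p \, dx < \infty$; since $B$ was arbitrary, $u \in W^{1,p}_{\mathrm{loc}}(\R^n)$.

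Finally, once $u \in W^{1,p}_{\mathrm{loc}}$, testing $-\Delta_p u = \omega$ against $\eta \in C_0^\infty$, $\eta \ge 0$, directly gives $\int \eta\, d\omega = \int |Du|^{p-2}Du \cdot \nabla\eta\, dx < \infty$, i.e. $\omega$ is locally finite against bounded test functions; but we already know $\omega \in M^+$, so to get $u \in L^1_{\mathrm{loc}}(d\omega)$ I would instead test with $\eta^p\, T_k(u)$ once more, now legitimately since $u \in W^{1,p}_{\mathrm{loc}}$: the left side is bounded uniformly in $k$ by the already-established gradient bound, the measure term is $\int \eta^p T_k(u)\, d\omega$, and monotone convergence yields $\int_B u\, d\omega < \infty$.

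The main obstacle I anticipate is the absorption step in the subquadratic range $1 < p < 2$, where the a priori pointwise bound on $u$ furnished by the Wolff potential is only $L^{p-1}_{\mathrm{loc}}$ rather than $L^p_{\mathrm{loc}}$; this requires either a more careful choice of test function (e.g. a power $T_k(u)^{\beta}$ with $\beta < 1$) or a short bootstrap to first upgrade the local integrability of $u$, and one must check that all test functions used are genuinely admissible in \eqref{loc-renorm} (bounded, with the right compact-support properties) and that the absorbed gradient terms carry coefficients that can be made small by the choice of cutoff geometry. A secondary technical point is ensuring the Caccioppoli inequalities are applied to the truncations $T_k(u)$ — which are bona fide Sobolev supersolutions — and that passing to the limit commutes with the gradient, which is exactly the content of the definition $Du = \lim_k \nabla T_k(u)$ from \cite{HKM}.
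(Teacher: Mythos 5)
Your strategy (truncations, a Caccioppoli-type estimate, and a bootstrap in the low-$p$ range) points in the right direction, but as written there are two genuine gaps. First, the absorption step in your claimed Caccioppoli inequality does not go through. Testing the renormalized identity \eqref{loc-renorm} with $h(u)=T_k(u)$ and $\varphi=\eta^p$ gives the good term $\int_{\{u\le k\}}|Du|^p\eta^p\,dx$ on the left, but the cross term is $\int |Du|^{p-2}Du\cdot\nabla(\eta^p)\,T_k(u)\,dx$, which carries the \emph{full} generalized gradient $Du$ on all of $\mathrm{supp}\,\eta$, including the set $\{u>k\}$, where a priori one only knows $|Du|^{p-1}\in L^{r'}_{\rm loc}$ for $r'<\tfrac{n}{n-1}$. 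Young's inequality then produces $\epsilon\int_{\mathrm{supp}\,\eta}|Du|^p\eta^p\,dx$, which cannot be absorbed into $\int_{\{u\le k\}}|Du|^p\eta^p\,dx$. The paper's proof avoids this by estimating the cross term via H\"older as $\|Du\|^{p-1}_{L^{r'(p-1)}(B)}\,\|u\|^{\beta}_{L^{\beta r}(B)}$ with $r>n$, which forces one to start with a \emph{small} power $u_k^{\beta}$, $0<\beta<\tfrac{p}{n-p}$, in place of $T_k(u)$, and then to run a finite bootstrap $\beta_0<\beta_1<\cdots<\beta_J=1$, at each step using the previously obtained weighted bound $\int_B|Du|^p u^{\beta_{j-1}-1}\,dx<\infty$ and H\"older with exponents $p'$ and $p$. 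In particular the bootstrap is needed for all $1<p\le \tfrac n2$, not only for $1<p<2$ as you anticipate.

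Second, the integrability of $u$ itself that your estimates require is not available from what you cite. Corollary~\ref{inf}(iii) only gives $\mathbf{W}_{1,p}\omega\in L^{s}_{\rm loc}(dx)$ with $s=\min(1,p-1)\le 1$, so the claim that the term $\int u^p|\nabla\eta|^p\,dx$ is handled ``directly'' when $p\ge 2$ is unfounded, and even the H\"older estimate above needs $u\in L^{\beta r}_{\rm loc}$ with $r>n$. The paper supplies this by a separate argument you are missing: since $\omega\in W^{-1,p'}_{\rm loc}$, the local Wolff inequality \eqref{wolff-ineq-loc} together with \eqref{wolff-ineq} shows $\omega_{2B}\in L^{-1,p'}(\R^n)$, so the localized problem $-\Delta_p u_{2B}=\omega_{2B}$ has a finite-energy solution $u_{2B}\in L^{1,p}_0(\R^n)$, hence $u_{2B}\in L^s_{\rm loc}$ for $s\le\tfrac{np}{n-p}$ by Sobolev embedding; since the local part of $\mathbf{W}_{1,p}\omega$ is dominated by $K\,u_{2B}$ (Theorem~\ref{thmpotest}), one gets $u\in L^s_{\rm loc}(dx)$ for all $s\le \tfrac{np}{n-p}$. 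Likewise, the paper proves $u\in L^1_{\rm loc}(d\omega)$ \emph{first} and directly, by splitting the Wolff potential of $\omega$ and using $\int \mathbf{W}_{1,p}\omega_{2B}\,d\omega_{2B}<\infty$, and then bounds the measure terms by $\int_B u_k^{\beta}\varphi\,d\omega\le \omega(B)^{1-\beta}\bigl(\int_B u\,d\omega\bigr)^{\beta}$; your alternative of pairing with $\|\omega\|_{-1,p'}$ and re-absorbing the resulting gradient presupposes precisely the Caccioppoli inequality that fails in the first gap. Repairing the proposal essentially amounts to reinstating these two missing steps, at which point it becomes the paper's proof.
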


\begin{proof} Let us first show that $u \in L^{1}_{{\rm loc}}(\R^n, d \omega)$ 
using Wolff's inequality \cite{HW}. Fix a ball $B=B(0, R)$, $R>0$.  By Theorem~\ref{thmpotest}, $u$ satisfies the Wolff potential estimate \eqref{wolff-K}. Hence,
\begin{equation*}
\begin{aligned}
\int_B u \, d \omega & \le K \int_B \left (
\int_0^{R}  \frac{\omega (B(x, r))}{r^{n-p}}\right)^{\frac{1}{p-1}}\frac{dr}{r}  d \omega (x)\\
& + K \int_B   \int_{R}^{\infty} \left( \frac{\omega (B(x, r))}{r^{n- p}}\right)^{\frac{1}{p-1}}\frac{dr}{r} d\omega(x) := I + II.  
\end{aligned}
\end{equation*} 

Since $B(x, r) \subset 2B=B(0, 2R)$ for $x \in B$ and $r<R$, we obtain   by \eqref{wolff-ineq-loc}, 
\begin{equation*}
\begin{aligned}
I & \le K \int_B \int_0^{R} \left( \frac{\omega (B(x, r)\cap 2B)}{r^{n-p}}\right)^{\frac{1}{p-1}}\frac{dr}{r}  d\omega(x) \\
& \le K \int_{\R^n} \w \omega_{2B} \, d \omega_{2B}   < \infty. 
\end{aligned}
\end{equation*}

To estimate $II$, notice that $B(x, r)\subset B(0, 2r)$, for $r>R$ and $x \in B$. Hence, 
 $$
I I \le K \omega(B) \, \int_{R}^{\infty} \left(\frac{\omega (B(0, 2r))}{r^{n- p}}\right)^{\frac{1}{p-1}}\frac{dr}{r} < \infty 
$$
by Corollary~\ref{inf}.

We next show that $u \in L^{s}_{{\rm loc}}(\R^n, dx)$ for $0< s\le \frac{np}{n-p}$. 
Arguing as above, we use  \eqref{wolff-K} and split the integral with respect to $dr/r$ 
into two parts: 
\begin{equation*}
\begin{aligned}
\int_B u^s dx & \le c_s\,  K^s \int_B \left (
\int_0^{R} \left( \frac{\omega (B(x, r))}{r^{n-p}}\right)^{\frac{1}{p-1}}\frac{dr}{r} \right)^s dx
 \\
& + c_s \, K^s \int_B  \left( \int_{R}^{\infty} \left( \frac{\omega (B(x, r))}{r^{n- p}}\right)^{\frac{1}{p-1}}\frac{dr}{r}
\right)^s dx := III + IV,  
\end{aligned}
\end{equation*}
where $c$ is a constant depending only on $s$. 

To estimate $III$, notice that by \eqref{wolff-ineq} 
$\omega_{2B} \in  L^{-1, p'}(\R^n)$, and consequently there is a unique solution  $u_{2B}\in L^{1, p}_{0}(\R^n)$  to the equation $-\Delta_p u_{2B} = \omega_{2B}$ in $\R^n$. Hence, by the Sobolev inequality, 
$u_{2B} \in L^s_{{\rm loc}}(\R^n)$ for $0<s\le \frac{np}{n-p}$. Clearly, $u_{2B}$ is 
$p$-superharmonic, and satisfies \eqref{wolff-K} 
with $\omega_{2B}$ in place of $\omega$, i.e., 
$$
 \int_0^{\infty} \left( \frac{\omega (B(x, r)\cap 2B)}{r^{n-p}}\right)^{\frac{1}{p-1}}\frac{dr}{r} \le K \, u_{2B}(x). 
$$  
Since $B(x, r)\subset 2B$ for $x \in B$ and $r<R$, we estimate  
$$
III \le c  \int_B \left (\int_0^{R} \left( \frac{\omega (B(x, r)\cap 2B)}{r^{n-p}}\right)^{\frac{1}{p-1}}\frac{dr}{r} \right)^s dx 
\le c  \int_{B}  u_{2B}^s dx < \infty.
$$

The estimate of $IV$ is similar to that of $II$:  
$$
IV \le c_s \, K \, |B| \left ( \int_{R}^{\infty} \left( \frac{\omega (B(0, 2r))}{r^{n- p}}\right)^{\frac{1}{p-1}}\frac{dr}{r} \right)^s < \infty 
$$
by Corollary~\ref{inf}. Thus, $u \in L^{s}_{{\rm loc}}(\R^n, dx)$ for $s\le \frac{np}{n-p}$.

We next show that there exists $0<\beta\le 1$ such that, for all balls $B$, 
\begin{equation}\label{beta-0}
\int_{B} |D u|^p  u^{\beta - 1} dx < \infty.
\end{equation} 
Indeed, since $u$ is $p$-superharmonic, it is a locally renormalized solution to 
$-\Delta_p u = \omega$ as discussed in Sec.~\ref{pre}. Let $u_k = \min \,(u, k)$, where $k>0$. Note that $u$, and hence 
$u_k$, is locally bounded below. 
Using  $h(u) = u_k^{\beta}$ ($0<\beta\le 1$) in \eqref{loc-renorm}, and a cut-off function $\varphi  \in C^\infty_0(B)$  such that $0\le \varphi \le 1$ and $\varphi =1$ on $\frac{1}{2}B$, we obtain 
\begin{equation}\label{beta-est}
\int_{u \le k} |D u|^p  u^{\beta - 1} \varphi \, dx + \int_{\R^n} |Du|^{p-2} Du \cdot \nabla 
\varphi \, u_k^{\beta} \, dx = \int_B u_k^\beta \varphi \, d \omega. 
\end{equation}

As was shown above, $u \in L^1_{{\rm loc}} (\R^n, \, d \omega)$, 
and hence the right-hand side is bounded by 
\begin{equation}\label{beta-est-tr}
\int_B u^\beta \varphi \, d \omega \le  \omega(B)^{1-\beta} \left (\int_B u  \, d \omega \right)^{\beta} < \infty,
\end{equation}
for $0< \beta\le 1$. 

Since $u$ is $p$-superharmonic, we have $|Du| \in L^{r'(p-1)}$ for $r'<\frac{n}{n-1}$. 
By H\"{o}lder's inequality with exponents $r'$ and $r>n$, we deduce from 
\eqref{beta-est},   
\begin{equation}\label{beta-est2}
\begin{aligned}
\int_{u \le k} |D u|^p  u^{\beta - 1} \varphi \, dx &  \le c \, ||Du||^{p-1}_{L^{r'(p-1)}(B)} 
||u||^{\beta}_{L^{\beta r}(B, dx)}\\
& +  \omega(B)^{1-\beta} \left (\int_B u \, d \omega  \right)^{\beta}. 
\end{aligned}
\end{equation}
If $\beta r  =s\le \frac{np}{n-p}$, where $r>n$ and $\beta\le 1$, then the right-hand side of the preceding inequality is finite. 
Picking $r$ so that $r>n$ and is arbitrarily close to $n$, and passing to the limit as $k \to \infty$, we obtain \eqref{beta-0} for $\beta=\beta_0$, provided 
$$
0<\beta_0<\frac{p}{n-p}, \quad \beta_0 \le 1.
$$ 

In the case $\frac{p}{n-p}>1$, i.e., for $p > \frac{n}{2}$, we can set $\beta_0=1$, which shows that in fact 
$Du \in L^p(\frac{1}{2} B, dx)$, for all $B=B(0, R)$. Hence,  $Du = \nabla u$ in the distributional sense, and consequently $u \in W^{1, p}_{{\rm loc}}(\R^n)$.

For $1<p\le \frac{n}{2}$, we  fix $s$ so that 
$p< s \le \frac{np}{n-p}$ which ensures that $u \in L^s_{{\rm loc}} (\R^n, dx)$ as shown above. Applying H\"{o}lder's inequality with exponents $p'$ and $p$, we obtain 
from \eqref{beta-est} and \eqref{beta-est-tr}, 
\begin{equation*}
\begin{aligned}
\int_{u \le k} |D u|^p  u^{\beta - 1} \varphi \, dx &  \le 
c \left ( \int_{B} |Du|^p u^{\beta_{0} - 1} dx \right)^{\frac{1}{p'}} \left ( \int_{B} 
u^{ \beta  p +(1-\beta_{0}) (p-1)} 
dx\right)^{\frac{1}{p}}\\& + \omega(B)^{1-\beta} \left (\int_B u \, d \omega \right)^{\beta}. 
\end{aligned}
\end{equation*}
Passing to the limit as $k \to \infty$, we deduce that \eqref{beta-0} holds if 
$\beta \le 1$ and 
$$\beta p + (1-\beta_0)(p-1) \le \beta p + p-1\le s.$$
In particular,  \eqref{beta-0} holds for $\beta=\beta_1 = \min \left(1,  \frac{s-p+1}{p}\right)$. 

If $\beta_1=1$, then $u \in W^{1, p}_{{\rm loc}}(\R^n)$ as above. In the case 
$$\beta_1 = \frac{s-p+1}{p}< 1,$$ 
we set $\beta_j=\beta_1+ \frac{p-1}{p} \beta_{j-1}$, so that 
$$
 \beta_j p +  (1-\beta_{j-1})(p-1) = s, \quad j\ge 2. 
$$
In other words, 
$$
\beta_j = \frac{s-p+1}{p} \sum_{i=0}^{j-1} \left(\frac{p-1}{p}\right)^i, \quad j=1, 2, \ldots,  
$$
Since $$\lim_{j \to \infty} \beta_j = s-p+1>1,$$
we can choose  $J\ge 2$ so that $\beta_1 \le \cdots \le \beta_{J-1}<1$, but $\beta_J \ge 1$.  

If $\beta_J>1$, then we will replace $\beta_J$ with $\beta_J=1$. Clearly, 
$$
\beta_j p+ (1-\beta_{j-1})(p-1) = s, \quad j=2, 3, \ldots,   J-1; \quad \beta_J \, p + (1-\beta_{J-1})(p-1) \le s.
$$

 Arguing by induction, and using \eqref{beta-0} with $\beta=\beta_j$, for $j=2, 3, \ldots, J$, we estimate as above,   
\begin{equation*}
\begin{aligned}
& \int_{u \le k} |D u|^p  u^{\beta_j - 1} \varphi \, dx  \le c 
\left ( \int_{B} |Du|^p u^{\beta_{j-1} - 1} dx \right)^{\frac{1}{p'}} \\ & \times \left ( \int_{B} 
u^{\beta_j p  +(1-\beta_{j-1}) (p-1)} 
dx\right)^{\frac{1}{p}} + \omega(B)^{1-\beta} \left (\int_B u \, d \omega \right)^{\beta}< \infty. 
\end{aligned}
\end{equation*}

Since $\beta_J=1$ at the last step, we arrive at the 
estimate 
$$
 \int_{u \le k} |D u|^p \varphi \, dx \le C_B <\infty,
$$
where $C_B$ does not depend on $k$. Passing to the limit as $k \to \infty$, we 
conclude that $u \in W^{1, p}_{{\rm loc}}(\R^n)$. \end{proof} 

In the next theorem we obtain a lower bound for supersolutions  of the integral equation \eqref{integral}.
\begin{theorem} \label{thmlowest}
Let $1<p<n$, $0<q<p-1$, $0<\alpha< \frac{p}{n}$, and $\sigma \in M^+(\R^n)$. Suppose 
$0\le u \in L^q_{{\rm loc}}(\R^n, d \sigma)$ is a nontrivial solution of \eqref{integraleq}.  Then the inequality 
\begin{equation}\label{lowerest}
u \ge C \, \left (\wa \sigma\right)^{\frac{p-1}{p-1-q}} \qquad d \sigma{\rm-a.e.}
\end{equation}
 holds, where $C$ is a positive constant depending only on $p, q$, and $n$. 
\end{theorem}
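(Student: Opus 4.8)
The plan is to combine a self-improving (bootstrap) scheme with a localization. Since $u\in L^q_{\mathrm{loc}}(\R^n,d\sigma)$ is nontrivial, the measure $\nu:=u^q\,d\sigma$ is a nonzero element of $\M$. The engine of the argument is the pointwise estimate
\begin{equation}\label{planKey}
\wa\bigl((\wa\mu)^{\beta}\,d\mu\bigr)(x)\ \ge\ c(n,p,\al,\beta)\,\bigl(\wa\mu(x)\bigr)^{1+\frac{\beta}{p-1}},\qquad x\in\R^n,
\end{equation}
valid for every $\mu\in\M$ and every $\beta\ge 0$, with $c(n,p,\al,\beta)>0$ depending continuously on $\beta$. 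I would prove \eqref{planKey} from the lower bound in Lemma~\ref{average-wolff}: writing $g(r)=\int_r^{\infty}\bigl(\mu(B(x,\rho))\,\rho^{\al p-n}\bigr)^{\frac1{p-1}}\frac{d\rho}{\rho}$, one has $\inf_{B(x,r)}\wa\mu\ge c_0^{-1}g(r)$ and $g(0^{+})=\wa\mu(x)$, so that, using $\int_{B(x,r)}(\wa\mu)^{\beta}\,d\mu\ge(\inf_{B(x,r)}\wa\mu)^{\beta}\,\mu(B(x,r))$ and $-g'(r)=\bigl(\mu(B(x,r))\,r^{\al p-n}\bigr)^{\frac1{p-1}}\frac1r$,
\[
\wa\bigl((\wa\mu)^{\beta}d\mu\bigr)(x)\ \ge\ c_0^{-\frac{\beta}{p-1}}\int_0^{\infty}g(r)^{\frac{\beta}{p-1}}\bigl(-g'(r)\bigr)\,dr\ =\ \frac{c_0^{-\beta/(p-1)}}{\frac{\beta}{p-1}+1}\,\bigl(\wa\mu(x)\bigr)^{\frac{\beta}{p-1}+1},
\]
where $c_0=c(n,p,\al)$ is the constant of Lemma~\ref{average-wolff} and the last identity follows by the change of variables $t=g(r)$.

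Next I fix a large ball $B_R=B(0,R)$. Since $\nu\ne 0$ there is $\rho_0>0$ with $m:=\nu(B(0,\rho_0))>0$, and for $x\in B_R$ and $r\ge 2\max(R,\rho_0)$ one has $B(x,r)\supset B(0,\rho_0)$, so $\wa\nu\ge\delta_R$ on $B_R$ for some constant $\delta_R>0$ depending on $n,p,\al,m,\rho_0$ and $R$. As $u\ge\wa\nu$ $d\sigma$-a.e., this gives the seed $u\ge\delta_R$ $d\sigma$-a.e.\ on $B_R$. Put $\sigma_R=\sigma|_{B_R}$ and note that $u\ge\wa(u^q\,d\sigma)\ge\wa(u^q\,d\sigma_R)$ $d\sigma$-a.e. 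I would then show by induction that $u\ge c_j\,(\wa\sigma_R)^{a_j}$ $d\sigma$-a.e., starting from $a_1=1$ and $c_1=\delta_R^{q/(p-1)}$ (feed the seed into $\wa(u^q\,d\sigma_R)$, using $\wa(t\mu)=t^{1/(p-1)}\wa\mu$), and passing from $j$ to $j+1$ via $u^q\,d\sigma_R\ge c_j^{q}(\wa\sigma_R)^{qa_j}\,d\sigma_R$, the same homogeneity, and \eqref{planKey} with $\mu=\sigma_R$, $\beta=qa_j$. This produces the recursions $a_{j+1}=1+\frac{q}{p-1}a_j$ and $c_{j+1}=c(n,p,\al,qa_j)\,c_j^{q/(p-1)}$. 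Hence $a_j\uparrow\gamma:=\frac{p-1}{p-1-q}$, while — since $qa_j\in[q,q\gamma]$ forces $c(n,p,\al,qa_j)\ge c_{*}>0$ — the numbers $x_j:=\log c_j$ obey $x_{j+1}\ge\log c_{*}+\frac{q}{p-1}x_j$, whence $x_j\ge x_{*}+\bigl(\frac{q}{p-1}\bigr)^{j-1}(x_1-x_{*})$ with $x_{*}=\frac{p-1}{p-1-q}\log c_{*}$; in particular $\liminf_j c_j\ge c_{\infty}:=c_{*}^{(p-1)/(p-1-q)}$, which depends only on $n,p,\al,q$, and \emph{not} on $R$ or $\delta_R$.

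Letting $j\to\infty$ in $u(x)\ge c_j\bigl(\wa\sigma_R(x)\bigr)^{a_j}$ — and using $c_j\ge c_{\infty}\exp\bigl((\frac{q}{p-1})^{j-1}(x_1-x_{*})\bigr)$ together with $a_j\to\gamma$ to treat separately the cases $\wa\sigma_R(x)=0$, $\wa\sigma_R(x)\in(0,\infty)$, and $\wa\sigma_R(x)=+\infty$ — yields $u\ge c_{\infty}\,(\wa\sigma_R)^{\gamma}$ $d\sigma$-a.e. Finally, letting $R\to\infty$ and invoking monotone convergence $\wa\sigma_R(x)\uparrow\wa\sigma(x)$ gives $u\ge c_{\infty}\,(\wa\sigma)^{(p-1)/(p-1-q)}$ $d\sigma$-a.e., which is the assertion, with $C=c_{\infty}$.

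I expect the crux to be \eqref{planKey}: pinning down the precise exponent $1+\frac{\beta}{p-1}$ and, above all, a constant that stays bounded away from $0$ as $\beta$ runs through $[q,q\gamma]$; together with the bookkeeping in the iteration, which must show that although the seed constant $\delta_R$ (hence $c_1$) deteriorates as $R\to\infty$, the limiting constant $c_{\infty}$ does not — this is exactly what makes the final passage $R\to\infty$ legitimate. The seed estimate, the homogeneity relations, and the two limit passages should be routine.
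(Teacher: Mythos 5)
Your proposal is correct and follows essentially the same route as the paper: a seed bound on a large ball, bootstrap via the pointwise inequality $\wa\bigl((\wa\mu)^{\beta}d\mu\bigr)\ge c(\beta)\,(\wa\mu)^{1+\frac{\beta}{p-1}}$ — which is exactly the paper's Lemma~\ref{Wolfflemma}, quoted from \cite{CV1} with the constant written as $\mathfrak{c}^{\beta/(p-1)}$, serving the same purpose as your uniform lower bound $c_*$ for $\beta\in[q,q\gamma]$ — with exponents increasing to $\frac{p-1}{p-1-q}$, constants surviving the limit because the seed constant's exponent tends to zero, and finally $R\to\infty$. The only differences are cosmetic: you re-derive the key inequality from Lemma~\ref{average-wolff} instead of citing it, localize via $\sigma_R$ and the uniform seed $u\ge\delta_R$ on $B_R$ rather than the paper's tail quantity $M(R)$, and track constants by a logarithmic fixed-point recursion rather than the explicit product used in the paper.
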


Before proving Theorem~\ref{thmlowest}, we recall the following lemma. 
\begin{lemma} \label{Wolfflemma} Let $1<p<\infty$ and $0<\al<\frac{n}{p}$. Then, for every $r>0$,
\begin{equation} \label{wolffest1} 
\wa\left[ (\wa\sigma)^r d\sigma \right] \ge \mathfrak{c}^{\frac{r}{p-1}} \left(\wa\sigma\right)^{\frac{r}{p-1}+1}, 
\end{equation}
where $\mathfrak{c}=\mathfrak{c}_{n, p, \al}$ depends only on $n$, $p$, and $\al$.
\end{lemma}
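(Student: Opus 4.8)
\textbf{Proof plan for Lemma~\ref{Wolfflemma}.}
The plan is to estimate the Wolff potential $\wa\left[(\wa\sigma)^r d\sigma\right](x)$ from below by a single dyadic annulus in the radial integration, using a lower bound for $(\wa\sigma)^r d\sigma$ on balls in terms of $\wa\sigma(x)$ itself. First I would fix $x \in \R^n$ and $t>0$, and restrict the outer radial integral defining $\wa[(\wa\sigma)^r d\sigma](x)$ to radii comparable to $t$, say $r \in (t, 2t)$; this leaves a term of the form $\left(t^{\al p - n}\int_{B(x, c t)} (\wa\sigma)^r \, d\sigma\right)^{\frac{1}{p-1}}$ up to a harmless constant. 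The crucial input is then a pointwise lower bound for $\wa\sigma(y)$ valid for $y$ in a fixed ball around $x$: by the lower estimate in Lemma~\ref{average-wolff} (or directly from the definition of the Wolff potential, using monotonicity of $\sigma(B(\cdot, \rho))$ under a change of center), for $y \in B(x, t)$ one has $\wa\sigma(y) \gtrsim \int_t^\infty \left(\sigma(B(x,\rho))/\rho^{n-\al p}\right)^{1/(p-1)}\, d\rho/\rho$, and in particular $\wa\sigma(y)$ is bounded below by any fixed truncation of $\wa\sigma(x)$ from scale $t$ upward.

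The mechanism is to choose $t = t(x)$ adaptively so that the ``tail from $t$'' of $\wa\sigma(x)$ captures a definite fraction, say one half, of the full potential $\wa\sigma(x)$ — this is possible precisely when $\wa\sigma(x) < \infty$, and when $\wa\sigma(x) = \infty$ the inequality \eqref{wolffest1} is trivial at $x$. With this choice, $\wa\sigma(y) \ge \tfrac12 \wa\sigma(x)$ for $y \in B(x, t)$, hence
\begin{equation*}
\int_{B(x, t)} (\wa\sigma)^r\, d\sigma \ge 2^{-r}\left(\wa\sigma(x)\right)^r \, \sigma(B(x, t)).
\end{equation*}
Plugging this into the single-annulus lower bound gives
\begin{equation*}
\wa\left[(\wa\sigma)^r d\sigma\right](x) \gtrsim \left(\wa\sigma(x)\right)^{\frac{r}{p-1}} \left(\frac{\sigma(B(x, t))}{t^{n-\al p}}\right)^{\frac{1}{p-1}}.
\end{equation*}
It remains to show the last factor is itself $\gtrsim \wa\sigma(x)$; this follows from the standard fact that a single term $\left(\sigma(B(x,t))/t^{n-\al p}\right)^{1/(p-1)}$ controls the truncation of the Wolff potential \emph{up to scale $t$} (using that $\sigma(B(x,\rho)) \le \sigma(B(x,t))$ for $\rho \le t$ and integrating $\rho^{(\al p - n)/(p-1) - 1}$), and the truncation of $\wa\sigma(x)$ from scale $t$ upward is, by our adaptive choice of $t$, also comparable to $\wa\sigma(x)$; combining the two truncations recovers all of $\wa\sigma(x)$.

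The main obstacle is the adaptive, $x$-dependent choice of the scale $t$ and making sure all constants are genuinely independent of $x$ and of $\sigma$ (depending only on $n$, $p$, $\al$, and the exponent $r$ through the factor $2^{-r}$, which after raising to the power $\tfrac{1}{p-1}$ becomes $\mathfrak{c}^{r/(p-1)}$ as stated). One must handle carefully the degenerate cases $\wa\sigma(x) = 0$ (trivial) and $\wa\sigma(x) = +\infty$ (trivial), and verify that for $0 < \wa\sigma(x) < \infty$ the splitting of the potential into a ``head'' below scale $t$ and a ``tail'' above scale $t$ can always be arranged so that each piece is at least a fixed fraction of the whole — e.g. by taking $t$ to be the supremum of scales for which the tail exceeds half the total, and checking continuity/monotonicity of the truncated potential in $t$. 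Alternatively, one can avoid an explicit $t$ by integrating the annulus estimate over all scales and using Fubini together with Lemma~\ref{average-wolff}, but the single-scale argument is cleaner and gives the sharp structural form \eqref{wolffest1} directly.
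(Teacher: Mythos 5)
Your first step is sound and is the standard one: for $y\in B(x,t)$ one has $\wa\sigma(y)\ge c\,\underline{W}_t(x)$, where $\underline{W}_t(x):=\int_t^\infty\bigl(\sigma(B(x,s))/s^{n-\al p}\bigr)^{1/(p-1)}\,ds/s$ is the tail, hence $\int_{B(x,t)}(\wa\sigma)^r\,d\sigma\ge c^r\,\underline{W}_t(x)^r\,\sigma(B(x,t))$. The gap is in your final step. The ``standard fact'' you invoke is backwards: since $\al p<n$, the exponent $(\al p-n)/(p-1)-1$ is $<-1$, so $\int_0^t\rho^{(\al p-n)/(p-1)-1}\,d\rho$ diverges, and the single term $\bigl(\sigma(B(x,t))/t^{n-\al p}\bigr)^{1/(p-1)}$ does \emph{not} control the head $\int_0^t$ of the Wolff potential (a point mass at $x$ has infinite head but finite single-scale terms); what monotonicity of $\sigma(B(x,\cdot))$ gives is the opposite inequality, namely that the single term is dominated by a constant times the tail. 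More fundamentally, no adaptive choice of a single scale $t$ can make $\bigl(\sigma(B(x,t))/t^{n-\al p}\bigr)^{1/(p-1)}\gtrsim\wa\sigma(x)$ with a constant depending only on $n,p,\al$: if $\sigma$ is a radial measure spread so that each of $2N$ dyadic scales contributes equally to $\wa\sigma(x)$, then at the median scale (where head and tail are each half the total) the single term is comparable to $\wa\sigma(x)/N$. So the single-scale strategy cannot yield \eqref{wolffest1}; in addition, the fixed losses from restricting to one annulus would leave an $r$-independent prefactor $<1$ that cannot be absorbed into $\mathfrak{c}^{r/(p-1)}$ uniformly as $r\to0$.

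The repair is exactly the alternative you set aside. Keep the integral over all scales: with the first step, $\wa[(\wa\sigma)^r d\sigma](x)\ge c^{r/(p-1)}\int_0^\infty\underline{W}_t(x)^{r/(p-1)}\bigl(\sigma(B(x,t))/t^{n-\al p}\bigr)^{1/(p-1)}\,dt/t$, and recognize that $\bigl(\sigma(B(x,t))/t^{n-\al p}\bigr)^{1/(p-1)}\,dt/t=-d\underline{W}_t(x)$, so the integral equals $\int_0^{\wa\sigma(x)}u^{r/(p-1)}\,du=\frac{1}{\frac{r}{p-1}+1}\,\wa\sigma(x)^{\frac{r}{p-1}+1}$ (with the case $\wa\sigma(x)=\infty$ handled by a short separate argument, not merely declared trivial). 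This produces the constant $C_{n,p,\al}^{r/(p-1)}/(\tfrac{r}{p-1}+1)$, and the elementary bound $\tfrac{r}{p-1}+1\le e^{r/(p-1)}$ converts it to the stated form $\mathfrak{c}^{r/(p-1)}$ with $\mathfrak{c}$ depending only on $n,p,\al$. This is precisely how the paper proceeds: it cites \cite{CV1}, Lemma 3.2, for the estimate with constant $C_{n,p,\al}^{r/(p-1)}/(\tfrac{r}{p-1}+1)$ and then absorbs the denominator; note that the factor $1/(\tfrac{r}{p-1}+1)$, which your single-scale scheme has no way of producing, is intrinsic to the inequality, coming from the integration $\int u^{r/(p-1)}\,du$ over scales.
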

Estimate \eqref{wolffest1} was proved in \cite{CV1}, Lemma 3.2, 
with the constant $\frac{C_{n, p, \al}^{\frac{r}{p-1}}}{\frac{r}{p-1}+1}$ on the right-hand side. Clearly, 
$\frac{r}{p-1}+1\le e^{\frac{r}{p-1}}$, and hence, \eqref{wolffest1} follows with 
$\mathfrak{c}=e^{-1} C_{n, p, \al}$. 
\bigskip 

\begin{proof}[Proof of Theorem \ref{thmlowest}]
Let $d \omega= u^q d \sigma$. 
Fix $x \in \R^n$ and pick $R>|x|$. Let $B=B(0, R)$, and let $d\sigma_{B}=\chi_{B} \, d\sigma.$ Iterating \eqref{integraleq}, we obtain  
\begin{equation*}
\begin{aligned}
 u(x)& \ge \wa \left[(\wa \omega)^q d\sigma_{B}\right](x)\\
&= \int_0^{\infty}\left(\frac{1}{t^{n-p}} \int_{B(x, t)\cap B}\w\omega(z)^q d\sigma(z)\right)^{\frac{1}{p-1}}\frac{dt}{t}.
 \end{aligned}
 \end{equation*}

We estimate, 
$$\wa\omega(z)= \int_0^{\infty} \left(\frac{\omega(B(z, s))}{s^{n-p}} \right)^{\frac{1}{p-1}}\frac{ds}{s} \ge c \int_R^{\infty} 
\left(\frac{\omega(B(z, 2s))}{s^{n-p}} \right)^{\frac{1}{p-1}}\frac{ds}{s},$$
where $c=c(n, p, \al)>0$. 

Notice that if $z \in B$ and $R \le s $ then $B(z, 2s) \supset B(0, s).$ Hence, 
$$\wa \omega(z) \ge c \int_R^{\infty} \left(\frac{\omega(B(0, s)}{s^{n-p}} \right)^{\frac{1}{p-1}}\frac{ds}{s}.$$
From this it follows, 
$$u(x)\ge [c \, M(R)]^{\frac{q}{p-1}} \, \wa \sigma_B(x),$$
where 
$$M(R)=\int_R^{\infty} \left(\frac{\omega(B(0, s)}{s^{n-p}} \right)^{\frac{1}{p-1}}\frac{ds}{s}>0.$$
Combining \eqref{integraleq} with the preceding estimate, and using 
Lemma~\ref{Wolfflemma} with $r=q$ and 
$\sigma_B$ in place of $\sigma$, we obtain 
\begin{equation*}
\begin{aligned}
u(x)& \ge [c \, M(R)]^{(\frac{q}{p-1})^2} \, \wa \left[( \wa \sigma_B)^q d \sigma\right](x) \\
& \ge  \mathfrak{c}^{\frac{q}{p-1}}  \, [c \, M(R)]^{(\frac{q}{p-1})^2} \,\left [\wa \sigma_B(x)\right]^{1+ \frac{q}{p-1}}. 
 \end{aligned}
 \end{equation*}
Iterating this procedure and using 
Lemma~\ref{Wolfflemma} with $r= q \sum_{k=0}^{j-1} ( \frac{q}{p-1})^{k}$, we deduce 
$$ u(x)\ge \mathfrak{c}^{ \sum_{k=1}^{j} k \left (\frac{q}{p-1}\right )^k  } [c \, M(R)]^{(\frac{q}{p-1})^{j+1}}     \, 
\left[\wa\sigma_B(x)\right]^{ \sum_{k=0}^{j} (\frac{q}{p-1})^k },$$ 
for all $j=2, 3, \ldots$. Since $0<q<p-1$, obviously 
$$
\sum_{k=1}^{\infty} k \left (\frac{q}{p-1}\right)^k < \infty. 
$$
Letting $j \to \infty $ in the preceding estimate we obtain
$$ u(x)\ge C \, \left[\wa \sigma_B(x)\right]^{\frac{p-1}{p-1-q}}, \quad B=B(0, R), \quad R>|x|,$$ 
where $C>0$ depends only on $n$, $p$, $q$, and $\alpha$. Letting $R \to \infty$ yields  
\eqref{lowerest} for all $x \in \R^n$. 
\end{proof}

The next  lemma shows that if there exists a nontrivial supersolution to \eqref{integral}, 
then $\sigma$ must be absolutely continuous with respect to the $(\alpha, p)$-capacity 
defined for all $E\subset \R^n$ by  (see \cite{AH}, Sec. 2.2) 
\begin{equation*}
{\rm cap}_{\alpha, p}(E)= \inf \left \{ ||f||^p_{L^p(\R^n)} : \, \, \mathbf{I}_\alpha f \ge 1 \, \, \text{on} \, \, E, \quad f \in L^p_{+}(\R^n)\right\}.
\end{equation*}
As a consequence, if \eqref{eq1} has a nontrivial $p$-superharmonic 
supersolution, then $\sigma$ is absolutely continuous with respect to the $p$-capacity 
defined by \eqref{p-capacity}. Notice that ${\rm cap}_{p}(E)\approx {\rm cap}_{1, p}(E)$ for compact sets $E$.

\begin{lemma}\label{abscap} Let $1<p<\infty$, $0<q<p-1$,  
$0<\al < \frac{n}{p}$, and $\sigma \in \M$. Suppose there is a nontrivial solution $u \in L^q_{{\rm loc}} (\R^n, \sigma)$ to inequality \eqref{integraleq}. Then there 
exists a constant $C$ depending only on $n, p, q, \al$ such that 
\begin{equation}\label{abs-cap0}
\sigma(E)\le C\, \left[{\rm cap}_{\alpha, p}(E)\right]^{\frac{q}{p-1}}\left(\int_Eu^q d\sigma\right)^{\frac{p-1-q}{p-1}},
\end{equation}
for all compact sets 
$E \subset \mathbb{R}^n$. 
\end{lemma}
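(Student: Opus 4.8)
The plan is to exploit the integral inequality \eqref{integraleq} together with the lower bound of Theorem~\ref{thmlowest} and the capacitary interpretation of the embedding constant for $\mathbf{I}_\alpha$. Fix a compact set $E\subset\R^n$ and let $d\omega=u^q\,d\sigma$. Since $u$ satisfies $u\ge\wa(u^q\,d\sigma)$ $d\sigma$-a.e., a first step is to record the standard pointwise comparison between the Wolff potential $\wa\omega$ and an iterated Riesz potential: by Wolff's inequality (the form in \cite{HW}, \cite{AH}, Sec.~4.5), for any $\nu\in\M$ one has $\int\wa\nu\,d\nu\approx\|\nu\|_{-1,p'}^{p'}\approx\|\mathbf{I}_\alpha\nu\|_{L^{p'}(\R^n)}^{p'}$ (up to the usual identification of the dual homogeneous Sobolev norm with an $L^{p'}$-norm of a Riesz potential). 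This lets me pass from the nonlinear object $\wa(\chi_E\,d\omega)$ to the linear potential $\mathbf{I}_\alpha(\chi_E\,d\omega)$.

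The core estimate I expect to need is the testing (duality) characterization of the capacity: for $E$ compact,
\begin{equation*}
\sigma_E(\R^n)=\int_E d\sigma \le \big[\mathrm{cap}_{\alpha,p}(E)\big]^{\frac1p}\,\Big\|\mathbf{I}_\alpha\big(\chi_E\,d\sigma_E\big)\Big\|_{L^{p'}(\R^n)}^{\,?}\ \ \text{(schematically)},
\end{equation*}
more precisely the well-known equivalence (see \cite{AH}, Sec.~2.2) that testing the capacity against a measure supported on $E$ gives $\nu(E)\le \mathrm{cap}_{\alpha,p}(E)^{1/p}\,\|\mathbf{I}_\alpha\nu\|_{L^{p'}}$, equivalently $\nu(E)^{p'}\lesssim \mathrm{cap}_{\alpha,p}(E)^{p'/p}\int\mathbf{I}_\alpha\nu\,d\nu$. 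Here I would \emph{not} take $\nu=\sigma_E$ directly; instead the right choice is to combine three ingredients: (a) the lower bound $u\ge C(\wa\sigma_E)^{\frac{p-1}{p-1-q}}$ $d\sigma$-a.e.\ on $E$ from Theorem~\ref{thmlowest} applied to $\sigma_E$, which after raising to the power $q$ and integrating gives a lower bound for $\int_E u^q\,d\sigma=\omega(E)$ in terms of $\int_E(\wa\sigma_E)^{\frac{q(p-1)}{p-1-q}}\,d\sigma$; (b) Wolff's inequality to identify $\int_E \wa\sigma_E\,d\sigma_E$ with (a power of) $\|\sigma_E\|_{-1,p'}$; (c) the capacitary testing inequality $\sigma_E(\R^n)\le \mathrm{cap}_{\alpha,p}(E)^{1/p}\,\|\sigma_E\|_{-1,p'}$.

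Concretely: from (c), $\sigma(E)^{p'}\le \mathrm{cap}_{\alpha,p}(E)^{p'/p}\,\|\sigma_E\|_{-1,p'}^{p'}\lesssim \mathrm{cap}_{\alpha,p}(E)^{p'/p}\int_E\wa\sigma_E\,d\sigma$. Now I bound $\int_E\wa\sigma_E\,d\sigma$ using Hölder with exponents $\tfrac{p-1+q}{q}$ and its conjugate (or a direct interpolation) against $\int_E(\wa\sigma_E)^{\frac{q(p-1)}{p-1-q}}\,d\sigma$ and $\sigma(E)$, so that the $(\wa\sigma_E)$-integral appears with exactly the exponent controlled by step (a), and the leftover power of $\sigma(E)$ can be absorbed into the left-hand side. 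Using (a) to replace $\int_E(\wa\sigma_E)^{\frac{q(p-1)}{p-1-q}}\,d\sigma$ by $C\int_E u^q\,d\sigma$, and solving the resulting inequality for $\sigma(E)$, should produce exactly $\sigma(E)\le C\,[\mathrm{cap}_{\alpha,p}(E)]^{\frac{q}{p-1}}\big(\int_E u^q\,d\sigma\big)^{\frac{p-1-q}{p-1}}$ after checking that the exponents match — and they do: $\tfrac{q}{p-1}$ and $\tfrac{p-1-q}{p-1}$ sum to $1$, consistent with homogeneity, and $p'$ cancels correctly.

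The main obstacle I anticipate is \emph{bookkeeping of exponents} in the Hölder/absorption step — getting the power of $\wa\sigma_E$ to be precisely $\frac{q(p-1)}{p-1-q}$ (so Theorem~\ref{thmlowest} applies after raising to the $q$-th power) while leaving a power of $\sigma(E)$ strictly less than $p'$ on the right so it can be absorbed. A secondary technical point is the clean passage between $\|\sigma_E\|_{-1,p'}^{p'}$, $\int\wa\sigma_E\,d\sigma_E$, and the $\mathbf{I}_\alpha$-energy/capacity duality when $\alpha$ is not necessarily $1$ and $p\ne 2$; this is exactly where Wolff's inequality \eqref{wolff-ineq} and the definition of $\mathrm{cap}_{\alpha,p}$ via $\mathbf{I}_\alpha f$ are used, and one must be careful that the capacity here is the Riesz $(\alpha,p)$-capacity rather than a variational $p$-capacity (the two agree up to constants when $\alpha=1$, as remarked in the excerpt). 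Once these identifications are in place, the argument is a short chain of inequalities.
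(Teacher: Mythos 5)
Your strategy closes only on part of the admissible range of $q$, and the obstruction is exactly the ``absorption'' step you flag. After combining the dual capacity testing inequality with the Hedberg--Wolff energy equivalence you arrive at $\sigma(E)^{p'}\lesssim \left[{\rm cap}_{\al,p}(E)\right]^{\frac{1}{p-1}}\int_E \wa\sigma_E\,d\sigma$, and you then need to dominate $\int_E\wa\sigma_E\,d\sigma$ by $\bigl(\int_E(\wa\sigma_E)^{\frac{q(p-1)}{p-1-q}}d\sigma\bigr)^{\theta}\,\sigma(E)^{1-\theta}$ with $\theta\cdot\frac{q(p-1)}{p-1-q}=1$, so that Theorem~\ref{thmlowest} (applied with $\sigma_E$) converts the first factor into $C\int_E u^q\,d\sigma$. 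This is a legitimate H\"older inequality only when $\frac{q(p-1)}{p-1-q}\ge 1$, i.e. $q\ge\frac{p-1}{p}$; for $q<\frac{p-1}{p}$ (e.g. $p=2$, $0<q<\tfrac12$) the required inequality is a reverse H\"older estimate and is false in general (take $\wa\sigma_E$ large on a set of tiny $\sigma$-measure). Equivalently, Theorem~\ref{thmlowest} gives $\wa\sigma_E\le C\,u^{\frac{p-1-q}{p-1}}$ $d\sigma$-a.e., and when $\frac{p-1-q}{p-1}>q$ the integral $\int_E u^{\frac{p-1-q}{p-1}}d\sigma$ cannot be controlled by $\int_E u^q\,d\sigma$ and $\sigma(E)$ alone, since no higher moment of $u$ is available. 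So in the regime $q\ge\frac{p-1}{p}$ your computation does produce \eqref{abs-cap0} with the correct exponents, but the lemma is needed (and used in the paper) for all $0<q<p-1$, so as written the proof has a genuine gap.

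The paper's proof takes a different and shorter route that avoids this dichotomy: it applies the capacitary strong-type estimate of Theorem~1.11 in \cite{V1} to the measure $d\omega=u^q\,d\sigma$ itself, namely $\int_E(\wa\omega)^{1-p}\,d\omega\le C\,{\rm cap}_{\al,p}(E)$, and then uses the supersolution property $u\ge\wa\omega$ $d\sigma$-a.e. (not the iterated lower bound of Theorem~\ref{thmlowest}) to deduce $\int_E u^{q-p+1}\,d\sigma\le C\,{\rm cap}_{\al,p}(E)$. A single H\"older inequality, $\sigma(E)=\int_E u^{-\beta}u^{\beta}\,d\sigma$ with $\beta=\frac{q(p-1-q)}{p-1}$ and exponents $r=\frac{p-1}{q}$, $r'=\frac{p-1}{p-1-q}$ (both admissible for every $0<q<p-1$), then yields \eqref{abs-cap0} directly. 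If you want to salvage your approach, you would need a substitute for the interpolation step valid for small $q$; the natural substitute is precisely the weighted capacitary estimate from \cite{V1}, at which point you recover the paper's argument.
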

\begin{proof} Let $d \omega = u^qd\sigma$. Then $u \ge \wa \omega$ $d\sigma$-a.e. By Theorem 1.11 in \cite{V1}, 
$$\int_E \frac{d\omega}{\left(\wa \omega\right)^{p-1}} \le C \, {\rm cap}_{\alpha, p}(E),$$
where $C$ depends only on $n$, $p$, and $\al$. Hence,  
\begin{equation}\label{abs-cap1}
\int_Eu^{q-p+1}\, d\sigma \le \int_E \frac{d\omega}{\left(\wa \omega\right)^{p-1}} \le C \, {\rm cap}_{\alpha, p}(E).
\end{equation}
Note that $q-p+1<0$. Using H\"{o}lder's inequality with exponents $r=\frac{p-1}{q}$ and 
$r'= \frac{p-1}{p-1-q}$, we have
$$\sigma(E)=\int_Eu^{-\beta}u^{\beta}d\sigma \le \left (\int_Eu^{-\beta r}d\sigma\right )^{\frac{1}{r}}\left (\int_Eu^{\beta r'}d\sigma\right)^{\frac{1}{r'}},$$
where $\beta=\frac{q(p-1-q)}{p-1}>0$. Then  $-\beta r=q-p+1$ and $\beta r'=q$, and  
since $u \in L^q_{{\rm loc}} (\R^n, \sigma)$, the preceding estimate implies \eqref{abs-cap0}.
\end{proof}

\section{Solutions of the nonlinear integral equation}\label{integralequation} 

\subsection{Weighted norm inequalities and intrinsic potentials $\mathbf{K}_{\alpha, p}$.} 
Let $1<p<\infty$, $0<q<p-1$, and 
$0<\al < \frac{n}{p}$. Let $\sigma \in \M$. We denote by $\kappa$ the least constant in 
the weighted norm inequality 
\begin{equation} \label{kap-global}
||\wa\nu||_{L^q(\R^n, d\sigma)} \le \kappa  \, \nu(\R^n)^{\frac{1}{p-1}}, \quad \forall \nu \in \M.  
\end{equation}
We will also need a localized version of \eqref{kap-global} for $\sigma_E=\sigma|_E$, where $E$ is 
a Borel subset   of $\R^n$, and $\kappa(E)$ is the least constant in 
\begin{equation} \label{kap-local}
||\wa\nu||_{L^q(d\sigma_{E})} \le \kappa (E) \, \nu(\R^n)^{\frac{1}{p-1}}, \quad \forall \nu \in \M. 
\end{equation}
In applications, it will be enough to use $\kappa(E)$ 
where $E=B$ is a ball, or the intersection of two balls.

We define the intrinsic potential of Wolff type in terms of $\kappa(B(x, s))$, the least constant in \eqref{kap-local} with $E=B(x, s)$: 
\begin{equation} \label{intrinsic-K}
\mathbf{K}_{\alpha, p} \sigma (x)  =  \int_0^{\infty} \left[\frac{ \kappa(B(x, s))^{\frac{q(p-1)}{p-1-q}}}{s^{n- \al p}}\right]^{\frac{1}{p-1}}\frac{ds}{s}, \quad x \in \R^n. 
\end{equation} 

\begin{Remark}\label{varkappa1} Notice that, for $\alpha=1$,  in the definition of $\mathbf{K}_{1, p} \sigma$ we can use either the constant $\varkappa(B(x, s))$ in \eqref{weight-lap}, 
or $\kappa(B(x, s))$ in \eqref{kap-local} with $E=B(x, s)$, since by Theorem~\ref{thmpotest},  for all $E$, 
\begin{equation}\label{varkappa-kappa} 
\frac{1}{K} \, \kappa(E) \le \varkappa(E) \le K \, \kappa(E),
\end{equation}
 where $K$ is the constant in \eqref{wolff-K} which depends only on $p$ and $n$.
\end{Remark}

The proof of the following key lemma is based on Vitali's covering lemma, and weak-type maximal function inequalities. 
\begin{lemma} \label{wolffestimate} Let $1<p<\infty$, $0<q<p-1$, and 
$0<\al < \frac{n}{p}$. 

(i) Suppose $0 \le  u \in L^q_{{\rm loc}}(\R^n, d\sigma)$ is a nontrivial solution of \eqref{integraleq}. Then, for every ball 
$E=B$, \eqref{kap-local} holds with 
\begin{equation} \label{kappa-B}
\kappa(B) \le c(n, p, q, \al) \left (\int_{B} u^q d \sigma\right)^{\frac{p-1-q}{q(p-1)}}.
\end{equation}
(ii) If in statement (i) we have $u \in  L^q(\R^n, d\sigma)$, then \eqref{kap-global} 
holds with 
\begin{equation} \label{kappa-G}
\kappa \le c(n, p, q, \al) \left (\int_{\R^n} u^q d \sigma\right)^{\frac{p-1-q}{q(p-1)}}.
\end{equation}
\end{lemma}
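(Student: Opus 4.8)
The plan is to deduce the weighted norm inequality \eqref{kap-local} directly from the lower pointwise bound in Theorem~\ref{thmlowest}, combined with the basic self-improving estimate for Wolff potentials in Lemma~\ref{Wolfflemma}. The key observation is that if $u$ solves \eqref{integraleq}, then $d\omega = u^q\,d\sigma$ satisfies $u \ge \wa\omega$ $d\sigma$-a.e., and by Theorem~\ref{thmlowest} we also have $u \ge C\,(\wa\sigma)^{\frac{p-1}{p-1-q}}$ $d\sigma$-a.e.

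First I would fix a ball $B$ and an arbitrary $\nu \in \M$. The goal is to bound $\int_B (\wa\nu)^q\,d\sigma$ by a constant times $\left(\int_B u^q\,d\sigma\right)^{\frac{p-1-q}{p-1}}\,\nu(\R^n)^{\frac{q}{p-1}}$, which upon taking $q$-th roots gives \eqref{kap-local} with the claimed constant \eqref{kappa-B}. The natural route is H\"older's inequality on $B$ with respect to $d\sigma$: write $(\wa\nu)^q = (\wa\nu)^q\, u^{-\theta}\, u^{\theta}$ for a suitable exponent $\theta$, apply H\"older with exponents chosen so that one factor becomes $\int_B u^q\,d\sigma$ (to the appropriate power) and the other factor becomes something of the form $\int_B (\wa\nu)^{q\gamma}\, u^{-\theta\gamma}\,d\sigma$, which should be comparable to $\int (\wa\nu)^{?}\,(\wa\sigma)^{?}\,d\sigma$ after invoking the lower bound $u \ge C(\wa\sigma)^{\frac{p-1}{p-1-q}}$. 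Then the remaining integral $\int_B (\wa\nu)^{a}(\wa\sigma)^{-b}\,d\sigma$ must be estimated by $\nu(\R^n)^{a/(p-1)}$; this is precisely where one uses a bilinear/duality estimate for Wolff potentials — comparing $\int (\wa\nu)(\wa\sigma)^{-1}\,d\omega'$-type quantities to total masses, which is a known consequence of Wolff's inequality \eqref{wolff-ineq} (or a Maz'ya-type capacitary/trace argument) — together with the restriction of $\sigma$ to the ball. The role of Vitali's covering lemma and the weak-type maximal function inequality, advertised in the sentence preceding the lemma, is to pass from the dyadic-annuli structure of $\wa\nu$ to pointwise control of the relevant maximal operator $M_{\al p}\nu$ on the support of $\sigma_B$.

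For part (ii), the argument is identical except that $B$ is replaced by all of $\R^n$ throughout, so no localization is needed and one simply tracks the global integral $\int_{\R^n} u^q\,d\sigma$; the constant comes out as \eqref{kappa-G}. It may be cleanest to prove (ii) first and then observe that (i) follows by replacing $\sigma$ with $\sigma_B$ and noting that $u$ restricted appropriately still satisfies the needed inequalities on $B$ (monotonicity of Wolff potentials in the measure ensures $u \ge \wa(u^q\,d\sigma_B)$ $d\sigma_B$-a.e., and Theorem~\ref{thmlowest} applies to $\sigma_B$).

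The main obstacle I anticipate is the bilinear estimate $\int_B (\wa\nu)^{a}(\wa\sigma)^{-b}\,d\sigma \lesssim \nu(\R^n)^{a/(p-1)}$: one must choose the exponents $a, b$ so that the powers of $\wa\sigma$ cancel against a factor coming from $d\sigma$ in a scale-invariant way, and then control what remains by the total mass of $\nu$ alone (with no dependence on $\nu$ beyond its mass). Getting the exponent bookkeeping right — so that everything collapses to exactly the power $\frac{p-1-q}{q(p-1)}$ on $\int u^q\,d\sigma$ in \eqref{kappa-B} — is the delicate part, and it is where the sublinearity condition $0<q<p-1$ (hence $\frac{q}{p-1}<1$ and the relevant H\"older exponents are admissible) is essential. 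A secondary technical point is justifying the Vitali covering / maximal function step uniformly in $\nu$, but this is standard once the annular decomposition of $\wa\nu$ is in place.
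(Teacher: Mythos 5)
Your reduction does not go through: the exponent bookkeeping you describe is essentially forced, and it leads to a false intermediate inequality. To end up with the power $\frac{p-1-q}{q(p-1)}$ on $\int_B u^q\,d\sigma$ and homogeneity $\nu(\R^n)^{\frac{q}{p-1}}$, your H\"older split (exponents $\frac{p-1}{q}$ and $\frac{p-1}{p-1-q}$) requires the ``bilinear'' estimate
\begin{equation*}
\int_B (\wa\nu)^{p-1}\, u^{-(p-1-q)}\, d\sigma \;\le\; C\,\nu(\R^n),
\end{equation*}
uniformly in $\nu\in\M$, and after inserting $u\gtrsim(\wa\sigma)^{\frac{p-1}{p-1-q}}$ this becomes $\int_B (\wa\nu)^{p-1}(\wa\sigma)^{-(p-1)}\,d\sigma\lesssim\nu(\R^n)$. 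This is not a consequence of Wolff's inequality \eqref{wolff-ineq} or of the capacitary bound used in Lemma~\ref{abscap} (those involve the \emph{same} measure in the potential and in the integration), and in fact it fails: take $d\sigma=|x|^{-\gamma}dx$ with $\al p<\gamma<n-q(n-\al p)$ and $\nu=\delta_0$. Then $u(x)\approx |x|^{-\frac{\gamma-\al p}{p-1-q}}$ is an exact solution of \eqref{integral} (so the hypotheses and the conclusion \eqref{kappa-B} of the lemma hold), $\wa\delta_0(x)\approx|x|^{-\frac{n-\al p}{p-1}}$, and the integrand above is exactly $c\,|x|^{-n}$, whose integral over any ball containing the origin diverges. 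The same computation with $u$ in place of $(\wa\sigma)^{\frac{p-1}{p-1-q}}$ gives the same divergence, so the obstacle you flagged is not a matter of delicate bookkeeping --- the strong-type $L^1$ bound your strategy needs is genuinely false at this endpoint.

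The paper's proof avoids this by never passing through a strong-type estimate. It introduces the measure-adapted maximal function $M^\nu_\omega(y)=\sup_{\rho>0}\nu(B(y,\rho/5))/\omega(B(y,\rho))$ with $d\omega=u^q\,d\sigma$, proves via Vitali's covering lemma only the weak-type bound $\|M^\nu_\omega\|_{L^{1,\infty}(d\omega)}\le\nu(\R^n)$, and uses the pointwise inequality $\wa\nu\le c\,(M^\nu_\omega)^{\frac{1}{p-1}}\wa\omega\le c\,(M^\nu_\omega)^{\frac{1}{p-1}}u$ (only $u\ge\wa(u^q d\sigma)$ is used; Theorem~\ref{thmlowest} plays no role here). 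Then $\int_B(\wa\nu)^q d\sigma\le c\int_B (M^\nu_\omega)^{\frac{q}{p-1}}d\omega$, and since $\frac{q}{p-1}<1$ one can invoke $\|f\|_{L^r(X,\omega)}\le C(r)\,\omega(X)^{1-r}\|f\|_{L^{1,\infty}(X,\omega)}$ on the finite measure $\omega|_B$, which yields exactly $\kappa(B)^q\le c\,\omega(B)^{\frac{p-1-q}{p-1}}$. In other words, the sublinearity $q<p-1$ enters not through admissible H\"older exponents but through the strict inequality $r=\frac{q}{p-1}<1$ that allows the weak-type $(1,1)$ information to be upgraded; your route implicitly demands the endpoint $r=1$, which is where it breaks. (Your remark that (ii) follows by exhausting $\R^n$ with balls is fine and is how the paper argues, though in the reverse order: (i) first, then $R\to\infty$.)
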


\begin{proof}
Let $d\omega=u^q d\sigma \in M^{+}(\R^n)$. For $\nu  \in M^{+}(\R^n)$, consider the maximal function 
\begin{equation} \label{maximalfcn}
M^{\nu}_{\omega}(y)= \sup_{\rho>0}\, \left [\frac{\nu(B(y,\frac{\rho}{5}))}{\omega(B(y, \rho))}\right], \qquad y \in \R^n,
\end{equation}
where we follow the convention $\frac{0}{0}=0$. 
Let 
$$E_t= \{ y \in \R^n : \, M^{\nu}_{\omega}(y) > t \}, \quad t>0.$$
Suppose $E_t \neq \emptyset.$ Then, for every $y \in E_t$, there exists a ball $B(y, \rho_y)$ such that 
$$\frac{\nu(B(y, \frac{\rho_y}{5}))}{\omega(B(y, \rho_y))} > t.$$
Thus $E_t \subset \bigcup_{y \in E_t}B(y, \frac{\rho_y}{5})$, and hence for  any compact subset $E$ of $E_t$  there exists a $k \in \mathbb{N}$ such  that
$$E \subset \bigcup_{j=1}^{k}B \Big(y_j, \frac{\rho_{y_j}}{5}\Big).$$
Applying Vitali's covering lemma, we find  disjoint balls $\left \{B\Big(y_{j_l},\frac{\rho_{y_{j_l}}}{5}\Big)\right\}_{l=1}^{m}$ such that 
$$E \subset \bigcup_{l=1}^{m}B\Big (y_{j_l}, \rho_{y_{j_l}}\Big).$$
Consequently,
 $$\omega(E) \le \sum_{l=1}^{m}\omega\Big (B(y_{j_l}, \rho_{y_{j_l}})\Big) \le \frac{1}{t} \sum_{l=1}^{m}\nu\Big(B(y_{j_l}, \frac{\rho_{y_{j_l}}}{5})\Big) \le \frac{1}{t}\nu (\R^n). $$
Therefore, 
\begin{equation}\label{weak-type} 
\sup_{t>0} \, t \, \omega(E_t) := ||M^{\nu}_{\omega}||_{L^{1, \infty}(d\omega)} \le \nu(\R^n).
\end{equation}  
Clearly,  for any $y \in \R^n$ such that  $M_{\omega}^\nu(y)< \infty$, we have 
\begin{equation*}
\begin{aligned}
\wa\nu(y) &=\int_0^{\infty} \left(\frac{\nu(B(y, s))}{s^{n-\al p}}\right)^{\frac{1}{p-1}}\frac{ds}{s}\\& =5^{\frac{n-\al p}{p-1}}\int_0^{\infty} \left(\frac{\nu(B(y, \frac{s}{5}))}{s^{n-\al p}}\right)^{\frac{1}{p-1}}\frac{ds}{s} \\
& =5^{\frac{n-\al p}{p-1}}\int_0^{\infty} \left(\frac{\nu(B(y, \frac{s}{5}))}{\omega(B(y, s))}\cdot\frac{\omega(B(y, s))}{s^{n-\al p}}\right)^{\frac{1}{p-1}}\frac{ds}{s}\\
& \le 5^{\frac{n-\al p}{p-1}} \, \left(M_{\omega}^\nu(y)\right)^{\frac{1}{p-1}}\wa\omega(y)  \le 5^{\frac{n-\al p}{p-1}} \, \left(M_{\omega}^\nu(y)\right)^{\frac{1}{p-1}} u(y).
 \end{aligned}
 \end{equation*}

Note that if $\nu(B(y, \frac{s}{5}))>0$ but $\omega(B(y, s)) =0$ for some $s>0$ then $M_{\omega}^\nu(y)=\infty$. However, by 
\eqref{weak-type} it follows that the set of such $y\in B$ has $\omega$-measure zero, and consequently $\sigma$-measure zero, since by Lemma~\ref{average-wolff} we have $\inf_{B} u >0$.

Hence,  
\begin{equation*}
||\wa\nu||^q_{L^q(d\sigma_{B})} \le c\int_{B} \left(M_{\omega}^\nu\right)^{\frac{q}{p-1}}u^qd\sigma = c\int_{B}  \left(M_{\omega}^\nu\right)^{\frac{q}{p-1}}d\omega.
\end{equation*}

To complete our estimates, we invoke the well-known inequality
$$
||f||_{L^r(X, \omega)} \le C({r}) \,  \omega(X)^{1-r} \, ||f||_{L^{1, \infty}(X, \omega)}, 
$$
where $0<r<1$, and $\omega$ is a finite measure on $X$. Applying the preceding inequality with $r=\frac{q}{p-1}$ and $f=M_{\omega}^\nu$, we estimate
\begin{equation*}
\begin{aligned}
||\wa\nu||^q_{L^q(d\sigma_{B})} 
& \le c \,  \omega(B)^{1-\frac{q}{p-1}} \, ||M_{\omega}^\nu||^{\frac{q}{p-1}}_{L^{1, \infty}(d\omega)}\\
 & \le c  \, \omega(B)^{1-\frac{q}{p-1}} \, \nu(\R^n)^{\frac{q}{p-1}},
 \end{aligned}
 \end{equation*}
where $c$ depends only on $n, p, q, \al$. This proves statement (i) of Lemma~\ref{wolffestimate}. 

If $u \in L^q(\R^n, \sigma)$, then statement (ii) follows (i) with $B=B(x, R)$ by letting $R\to \infty$. 
\end{proof}

We will need a converse estimate to \eqref{kap-local} for subsolutions $u_B$ of equation 
\eqref{integral}  
with $\sigma_B$ in place of $\sigma$, for a ball $B$. 
\begin{Corollary}\label{kappa-uB}
Let $1<p<\infty$, $0<q<p-1$, and 
$0<\al < \frac{n}{p}$. Let $\sigma \in \M$. Suppose $u_B \in L^q(\R^n, d\sigma_B)$ is a subsolution associated  with $\sigma_B$, i.e., $0\le u_B\le \wa(u_B^q d \sigma_B)$ $d\sigma_B$-a.e. Then, for every ball 
$B$, 
\begin{equation} \label{kappa-C}
\left (\int_{B} u_B^q  d \sigma\right)^{\frac{p-1-q}{q(p-1)}} \le \kappa(B).
\end{equation}
\end{Corollary}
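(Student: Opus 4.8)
The plan is to establish \eqref{kappa-C} by running essentially the argument of Theorem~\ref{thmlowest} applied to the measure $\sigma_B$, but tracking the dependence on $\int_B u_B^q\,d\sigma$ rather than letting it disappear. First I would set $d\omega_B = u_B^q\,d\sigma_B$ and use the subsolution inequality $u_B \le \wa\omega_B$ $d\sigma_B$-a.e. The idea is that a subsolution cannot be too large, so the mass of $\omega_B$ controls the embedding constant $\kappa(B)$ from below: concretely, I want to produce, for an arbitrary test measure $\nu \in \M$, the bound $\|\wa\nu\|_{L^q(d\sigma_B)} \le \kappa(B)\,\nu(\R^n)^{1/(p-1)}$ with a specific $\kappa(B)$ comparable to $(\int_B u_B^q\,d\sigma)^{(p-1-q)/(q(p-1))}$ — but here the direction is reversed, so I instead want a lower bound on $\kappa(B)$.

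The cleaner route is to exploit the self-improving Wolff estimate of Lemma~\ref{Wolfflemma} together with the subsolution inequality to bootstrap a pointwise \emph{upper} bound on $u_B$ in terms of $\wa\sigma_B$ and the quantity $A_B := \int_B u_B^q\,d\sigma = \omega_B(\R^n)$. Starting from $u_B \le \wa(u_B^q\,d\sigma_B)$ and crudely estimating $u_B^q \le$ something involving $A_B$ via the trivial bound $\omega_B(B(x,r)) \le A_B$, one reverses the iteration in Theorem~\ref{thmlowest}: each application of \eqref{integraleq}-in-reverse with Lemma~\ref{Wolfflemma} (now used as an upper bound, after noting $\wa[(\wa\sigma_B)^r d\sigma_B]$ is \emph{comparable} to, not just bounded below by, $(\wa\sigma_B)^{r/(p-1)+1}$ when the relevant quantities are finite — or one simply uses the one-sided estimate in the favorable direction) produces $u_B \le C\,A_B^{q/(p-1)\cdot(\text{geometric sum})}\,(\wa\sigma_B)^{(\text{geometric sum})}$, and in the limit $u_B \le C\,A_B^{q/(p-1-q)}\,(\wa\sigma_B)^{(p-1)/(p-1-q)}$ up to constants depending only on $n,p,q,\al$. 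Then I would plug the test measure $\nu$ into $\kappa(B)$: since $\|\wa\nu\|_{L^q(d\sigma_B)}$ is controlled (via Lemma~\ref{wolffestimate}(i) applied to the subsolution $u_B$, which gives $\kappa(B) \le c\,(\int_B u_B^q\,d\sigma)^{(p-1-q)/(q(p-1))}$) — wait, that is already the wrong inequality.

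Let me instead be direct. The correct approach is: apply Lemma~\ref{wolffestimate}(i) — but that lemma is stated for \emph{solutions} of \eqref{integraleq}, i.e., supersolutions, and yields $\kappa(B) \le c(\int_B u^q d\sigma)^{(p-1-q)/(q(p-1))}$, which is the reverse of what we want. For a \emph{subsolution} we need the opposite bound, and this should come from testing the definition of $\kappa(B)$ against a well-chosen $\nu$. The natural choice is $\nu = u_B^q\,d\sigma_B = \omega_B$ itself (or a truncation/normalization thereof). With $\nu = \omega_B$ we get $\|\wa\omega_B\|_{L^q(d\sigma_B)} \le \kappa(B)\,\omega_B(\R^n)^{1/(p-1)} = \kappa(B)\,A_B^{1/(p-1)}$. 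On the other hand, since $u_B \le \wa\omega_B$ $d\sigma_B$-a.e., the left side is $\ge \|u_B\|_{L^q(d\sigma_B)} = A_B^{1/q}$. Combining, $A_B^{1/q} \le \kappa(B)\,A_B^{1/(p-1)}$, i.e. $A_B^{1/q - 1/(p-1)} \le \kappa(B)$, and since $1/q - 1/(p-1) = (p-1-q)/(q(p-1))$, this is exactly \eqref{kappa-C}. The only thing to check carefully — and the one genuine obstacle — is that $\omega_B$ is a \emph{legitimate} test measure, i.e.\ $\omega_B \in \M$ with $\omega_B(\R^n) = A_B < \infty$, which holds precisely because $u_B \in L^q(\R^n, d\sigma_B)$ is assumed; and that $A_B > 0$ (otherwise $u_B \equiv 0$ $d\sigma_B$-a.e.\ and \eqref{kappa-C} reads $0 \le \kappa(B)$, trivially true). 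Thus the proof is short once one identifies the right test measure; I would present it in three lines: test $\kappa(B)$ against $\nu = u_B^q\,d\sigma_B$, invoke $u_B \le \wa(u_B^q d\sigma_B)$ on the support, and rearrange exponents.
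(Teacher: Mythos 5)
Your final argument is exactly the paper's proof: test \eqref{kap-local} with $\nu = u_B^q\,d\sigma_B$, use the subsolution inequality $u_B \le \wa(u_B^q\,d\sigma_B)$ $d\sigma_B$-a.e., and rearrange the exponents, noting the trivial cases $\int_B u_B^q\,d\sigma = 0$ or $\kappa(B)=\infty$. The earlier detours (reversing the iteration of Theorem~\ref{thmlowest}, appealing to Lemma~\ref{wolffestimate}) are unnecessary, but the three-line argument you settle on is correct and identical in substance to the paper's.
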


\begin{proof} Without loss of generality we may assume 
 $\kappa(B)<\infty$. Then using \eqref{kap-local} with 
$d \nu = u_B^q d \sigma_B$, we obtain 
$$
\int_B  u_B^q d \sigma \le \int_B 
\left[ \wa\left( u_B^q  d \sigma_B\right)\right]^q d \sigma \le \kappa(B)^q 
\left(\int_B u_B^q  d \sigma\right)^{\frac{q}{p-1}},  
$$
which yields \eqref{kappa-C}. 
\end{proof}

\subsection{Solutions in $L^q(\R^n, d\sigma)$.} The next theorem is concerned with the existence of global solutions  $u \in L^q(\R^n, d\sigma)$ to \eqref{integral}.

\begin{theorem} \label{Lqglobal}
Let $\sigma \in \M$. Then equation \eqref{integral} has a solution $u\in L^q(\R^n, d\sigma) $  if and only if there exists a constant $\kappa >0$ such that  \eqref{kap-global} 
holds. 
\end{theorem}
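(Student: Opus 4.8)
The plan is to establish both directions of the equivalence, with the forward (necessity) direction being the easy one and the construction of a global solution being the substance of the argument. For necessity, suppose $u \in L^q(\R^n, d\sigma)$ solves \eqref{integral}, so in particular $u \ge \wa(u^q\, d\sigma)$ $d\sigma$-a.e. Then Lemma~\ref{wolffestimate}(ii) applies directly and yields \eqref{kap-global} with $\kappa \le c(n,p,q,\al)\,\|u\|_{L^q(d\sigma)}^{(p-1-q)/(q(p-1))} < \infty$. So the existence of the constant $\kappa$ in \eqref{kap-global} is automatic.

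For sufficiency, assume \eqref{kap-global} holds. The natural approach is iteration: define $u_0 = c_0\,\wa\sigma$ for a suitable small constant $c_0$, and set $u_{j+1} = \wa(u_j^q\, d\sigma)$. One checks by induction that $\{u_j\}$ is a nondecreasing (or uniformly bounded, depending on the normalization) sequence in $L^q(\R^n, d\sigma)$. The key a priori estimate comes from \eqref{kap-global}: since $\wa(u_j^q\, d\sigma) = \wa\nu$ with $d\nu = u_j^q\, d\sigma$ and $\nu(\R^n) = \|u_j\|_{L^q(d\sigma)}^q$, we get
\begin{equation*}
\|u_{j+1}\|_{L^q(d\sigma)} = \|\wa(u_j^q\,d\sigma)\|_{L^q(d\sigma)} \le \kappa\, \|u_j\|_{L^q(d\sigma)}^{\frac{q}{p-1}}.
\end{equation*}
Since $0 < q < p-1$ means $\frac{q}{p-1} < 1$, the map $t \mapsto \kappa\, t^{q/(p-1)}$ has a fixed point $t_* = \kappa^{(p-1)/(p-1-q)}$ and is a contraction-like iteration toward it; one shows $\|u_j\|_{L^q(d\sigma)} \le \max(t_*, \|u_0\|_{L^q(d\sigma)})$ for all $j$, giving a uniform bound $\|u_j\|_{L^q(d\sigma)} \le A$. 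By monotone convergence (arranging monotonicity via the starting point, e.g. $u_0 \equiv 0$ and noting $u_1 = \wa(0) = 0$ is degenerate, so instead one starts from a small multiple of $\wa\sigma$ and uses Theorem~\ref{thmlowest}-type lower bounds, or simply iterates from $u_0 = $ the nontrivial candidate and extracts monotonicity from $u_1 \ge u_0$), the pointwise limit $u = \lim_j u_j$ exists, lies in $L^q(\R^n, d\sigma)$ with $\|u\|_{L^q(d\sigma)} \le A$, and satisfies $u = \wa(u^q\, d\sigma)$ by monotone convergence inside the Wolff potential integral. Finally one must verify $u$ is nontrivial: this uses $\wa\sigma \not\equiv \infty$ (which follows from \eqref{kap-global} forcing $\sigma$ to be finite in an appropriate sense) together with the lower bound of Theorem~\ref{thmlowest}, or a direct argument that the iteration does not collapse to zero.

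The main obstacle I expect is making the monotone iteration scheme actually produce a \emph{nontrivial} solution while staying in $L^q(d\sigma)$ — the two requirements pull in opposite directions. Starting too small (e.g. $u_0 = 0$) gives triviality; starting too large may violate the $L^q$ bound at the first step unless one first knows something like $\wa\sigma \in L^q(d\sigma)$. The cleanest fix is to first observe that \eqref{kap-global} applied with $\nu = \sigma$ (assuming $\sigma(\R^n) < \infty$, or localizing) gives $\wa\sigma \in L^q(d\sigma)$ with controlled norm, then take $u_0 = \varepsilon\, \wa\sigma$ with $\varepsilon$ chosen so that $u_1 = \wa(u_0^q\, d\sigma) = \varepsilon^{q/(p-1)}\wa(\wa\sigma{}^q d\sigma) \ge \varepsilon\, \wa\sigma = u_0$ — here one uses Lemma~\ref{Wolfflemma} with $r = q$ to bound $\wa(\wa\sigma{}^q d\sigma)$ below by $\mathfrak{c}^{q/(p-1)}(\wa\sigma)^{1 + q/(p-1)}$, and the condition $\wa\sigma \not\equiv \infty$ is needed to make sense of things; this forces a smallness/largeness condition on $\varepsilon$ that is consistent precisely because $q/(p-1) < 1$. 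After that, monotonicity propagates by the monotonicity of $\nu \mapsto \wa\nu$, and the uniform $L^q$ bound propagates by the displayed contraction estimate, so the limit is a genuine nontrivial solution in $L^q(\R^n, d\sigma)$.
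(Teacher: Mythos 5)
Your necessity direction and the contraction estimate $\|u_{j+1}\|_{L^q(d\sigma)}\le \kappa\,\|u_j\|_{L^q(d\sigma)}^{q/(p-1)}$ are fine and match the paper. The sufficiency direction, however, has a genuine gap at the seed of the iteration. You start from $u_0=\varepsilon\,\wa\sigma$ and claim that $u_1\ge u_0$ can be arranged by choosing $\varepsilon$; but Lemma~\ref{Wolfflemma} with $r=q$ only gives $u_1\ge \varepsilon^{q/(p-1)}\mathfrak{c}^{q/(p-1)}(\wa\sigma)^{1+\frac{q}{p-1}}$, so the inequality $u_1\ge \varepsilon\,\wa\sigma$ requires $(\wa\sigma)^{q/(p-1)}\ge \varepsilon^{1-\frac{q}{p-1}}\mathfrak{c}^{-q/(p-1)}$ \emph{pointwise}. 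The deficit is a factor $(\wa\sigma)^{q/(p-1)}$, not a constant, and $\wa\sigma$ is not bounded below (by Corollary~\ref{inf} it has $\liminf$ zero at infinity), so no choice of $\varepsilon$ makes the iteration monotone. The homogeneity forces the seed used in the paper, $u_0=c_0(\wa\sigma)^{\frac{p-1}{p-1-q}}$: this is the exponent fixed by Lemma~\ref{Wolfflemma} with $r=\frac{q(p-1)}{p-1-q}$, for which $u_1\ge u_0$ reduces to a condition on constants alone, exploiting $q/(p-1)<1$.

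With the correct seed a second, and harder, gap appears: to run your induction you need $u_0\in L^q(d\sigma)$, i.e.\ $\int_{\R^n}(\wa\sigma)^{\frac{q(p-1)}{p-1-q}}d\sigma<\infty$, and this does not follow from your suggestion of applying \eqref{kap-global} with $\nu=\sigma$. That application presumes $\sigma(\R^n)<\infty$ (only local finiteness is assumed), and even after localizing to $\sigma_B$ it yields only $\int(\wa\sigma_B)^{q}\,d\sigma\le\kappa^q\sigma(B)^{q/(p-1)}$, an $L^q$ bound rather than the required $L^{\frac{q(p-1)}{p-1-q}}$ bound, and with a right-hand side that blows up as $B\uparrow\R^n$. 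The paper's proof devotes its first half precisely to this point: starting from $v_0=(\wa\sigma_B)^q$, it applies \eqref{kap-global} repeatedly to $d\nu=v_jd\sigma$, uses Lemma~\ref{Wolfflemma} to push the exponent up at each step, and obtains via Fatou a bound $\int(\wa\sigma_B)^{\frac{q(p-1)}{p-1-q}}d\sigma\le C\kappa^{\frac{q(p-1)}{p-1-q}}$ uniform in $B$, after which monotone convergence gives the global integrability. Without this bootstrap your scheme cannot get started, so the proposal as written does not prove the sufficiency.
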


\begin{proof}
The necessity of \eqref{kap-global} 
follows from Lemma \ref{wolffestimate}. To prove its sufficiency, we first show that 
\eqref{kap-global} implies 
\begin{equation} \label{wolffintegrable}
\int_{\R^n} \left(\wa\sigma\right)^{\frac{q(p-1)}{p-1-q}}d\sigma < \infty.
\end{equation}

Indeed, fix a ball $B=B(x, R)$. Applying \eqref{kap-global} with $d\nu=d\sigma_B$  we obtain 
$$\int_{\R^n}(\wa\sigma_B)^qd\sigma \le \kappa^q \sigma(B)^{\frac{q}{p-1}} < \infty.$$
Letting $v_0=(\wa\sigma_B)^q$ where $v_0 \in L^1(\R^n, d\sigma)$, and 
using $d\nu=v_0 d\sigma$ in  \eqref{kap-global} we obtain
$$\int_{\R^n} \left[\wa(v_0d\sigma)\right]^qd\sigma \le \kappa^q \left(\int_{\R^n}v_0d\sigma\right)^{\frac{q}{p-1}} < \infty.$$
By Lemma~\ref{Wolfflemma} with $r=q$, we have 
\begin{equation*}
\begin{aligned}
\left[\wa(v_0d\sigma)\right]^q& =\left[\wa(\wa\sigma_B)^qd\sigma)\right]^q\\
& \ge \left[\wa(\wa\sigma_B)^qd\sigma_B)\right]^q\ge \mathfrak{c}^{\frac{q^2}{p-1}} \left(\wa\sigma_B\right)^{q(\frac{q}{p-1}+1)}.
\end{aligned}
\end{equation*}

Let $v_1= \mathfrak{c}^{\frac{q^2}{p-1}} \left(\wa\sigma_B\right)^{q(\frac{q}{p-1}+1)}$. Then $v_1 \in L^1(\R^n, d\sigma),$ and 
$$\int_{\R^n}v_1d\sigma \le \kappa^q \left(\int_{\R^n}v_0d\sigma\right)^{\frac{q}{p-1}}.$$
Applying again    \eqref{kap-global} with $d\nu=v_1d\sigma$ we obtain
\begin{equation*}
\begin{aligned}
\int_{\R^n} \left[\wa(v_1d\sigma)\right]^qd\sigma & \le \kappa^q \left(\int_{\R^n}v_1d\sigma\right)^{\frac{q}{p-1}}\\
& \le \kappa^{q(1+\frac{q}{p-1})} \left(\int_{\R^n}v_0 \, d\sigma\right)^{\frac{q^2}{(p-1)^2}} < \infty.
\end{aligned}
\end{equation*}
By Lemma~\ref{Wolfflemma} with $r=q(\frac{q}{p-1}+1)$, we estimate 
 \begin{equation*}
\begin{aligned}
 \left[\wa(v_1d\sigma)\right]^q& =\left[\wa(\mathfrak{c}^{\frac{q^2}{p-1}}(\wa\sigma_B)^{q(\frac{q}{p-1}+1)}d\sigma)\right]^q \\
& \ge \mathfrak{c}^{\frac{q^2}{p-1} (1+ 2\frac{q}{p-1}) }\left(\wa\sigma_B\right)^{q(\frac{q^2}{(p-1)^2}+ \frac{q}{p-1}+1)}.
\end{aligned}
\end{equation*}

Setting $$ v_2= \mathfrak{c}^{\frac{q^2}{p-1} (1+ 2\frac{q}{p-1}) } \left(\wa\sigma_B\right)^{q(\frac{q^2}{(p-1)^2}+ \frac{q}{p-1}+1)},$$ we obtain 
$$\int_{\R^n}v_2d\sigma \le \kappa^{q(1+\frac{q}{p-1})} \left(\int_{\R^n}v_0d\sigma\right)^{\frac{q^2}{(p-1)^2}} < \infty.$$
Arguing by induction and letting $$ v_j= \mathfrak{c}^{\frac{q^2}{p-1}\sum_{k=1}^{j} k ( \frac{q}{p-1})^{k-1}}\left(\wa\sigma_B\right)^{q \sum_{k=0}^{j} (\frac{q}{p-1})^k},$$ we obtain 
$$ \int_{\R^n} v_jd\sigma \le \kappa^{q \sum_{k=0}^{j-1} (\frac{q}{p-1})^k} \left(\int_{\R^n}v_0d\sigma\right)^{(\frac{q}{p-1})^j} < \infty.$$
By Fatou's lemma, 
$$ \int_{\R^n}\liminf_{j \to \infty}v_jd\sigma \le \liminf_{j \to \infty}\int_{\R^n}v_jd\sigma \le \kappa^{\frac{q(p-1)}{p-1-q}} < \infty.$$
Thus,  
\begin{equation} \label{wolfflocal}
\mathfrak{c}^{\frac{q^2}{p-1}\sum_{k=1}^{\infty} k ( \frac{q}{p-1})^{k-1}} \int_{\R^n} \left(\wa\sigma_B\right)^{\frac{q(p-1)}{p-1-q}}d\sigma \le \kappa^{\frac{q(p-1)}{p-1-q}} < \infty.
\end{equation}
Since $\mathfrak{c}$ and $\kappa$ in \eqref{wolfflocal}
 are independent of $B=B(x, R)$, letting $R\to \infty$ and using the Monotone Convergence Theorem  we deduce \eqref{wolffintegrable}.

Next, we let $u_0=c_0 \left(\wa \sigma \right)^{\frac{p-1}{p-1-q}}$, where $c_0 > 0$ is a small constant to be chosen later on, and construct 
a sequence $u_j$ as follows: 
\begin{equation} \label{iterations}
u_{j+1}=\wa(u^q_j d\sigma), \quad j=0,1,2, \ldots .
\end{equation}
Applying Lemma~\ref{Wolfflemma}, we estimate  
\begin{equation*}
\begin{aligned}
 u_1& =\wa(u_0^qd\sigma)=c_0^{\frac{q}{p-1}} 
 \wa \left[ \left(\wa\sigma\right)^{\frac{q(p-1)}{p-1-q}}d\sigma\right] \\& \ge c_0^{\frac{q}{p-1}} \, \mathfrak{c}^{\frac{q}{p-1-q}} 
 \left(\wa\sigma\right)^{\frac{p-1}{p-1-q}},
  \end{aligned}
 \end{equation*}
  where $\mathfrak{c}$ is the constant in \eqref{wolffestimate}. Choosing $c_0$ so that $c_0^{\frac{q}{p-1}} \, \mathfrak{c}^{\frac{q}{p-1-q}}  \ge c_0$, we obtain $u_1 \ge u_0.$ 
  
  By induction, we have  $u_j\le u_{j+1}$ ($j=0, 1, \ldots$). Note that $u_0 \in L^q(\R^n, d\sigma)$ by \eqref{wolffintegrable}. Suppose that $u_j \in L^q(\R^n, d\sigma)$, for some 
  $j \ge 0$.  Then, using \eqref{kap-global} with $d\nu=u_j^qd\sigma$, we obtain  
\begin{equation*}
\begin{aligned}
\int_{\R^n}u_{j+1}^q \, d\sigma& = \int_{\R^n}\left[\wa(u_j^q \, d\sigma)\right]^qd\sigma\\
& \le \kappa \left(\int_{\R^n}u_j^q \, d\sigma \right)^{\frac{q}{p-1}} < \infty. 
 \end{aligned}
 \end{equation*}
Since $u_j \le u_{j+1}$, the preceding inequality yields, for  all $j=0, 1, \ldots$, 
$$\int_{\R^n}u_{j+1}^qd\sigma  \le  \kappa^{\frac{p-1}{p-1-q}} < \infty.$$
Passing to the limit as $j \to\infty$ in \eqref{iterations}, we conclude using the Monotone Covergence Theorem
that $u= \lim_{j \to \infty} u_j$ is a nontrivial solution
of \eqref{integral} such that $u \in L^q(\R^n, d\sigma)$. 
\end{proof}

\subsection{Solutions in $L^q_{{\rm loc}}(\R^n, d\sigma)$.} In this subsection we 
prove the main theorem for general integral equations \eqref{integral}. We start with the following lemma. 
\begin{lemma} \label{Crx}
Suppose $0\le u \in L^q_{{\rm loc}}(\R^n, d\sigma)$ is a nontrivial solution 
of \eqref{integraleq}. Then,  for all $x \in \R^n$ and $t >0$, 
\begin{equation} \label{est1}
\sigma\left(B(x, t)\right)
\left[\int_t^{\infty} \left(\frac{ \left[\kappa(B(x, s))\right]^{\frac{q(p-1)}{p-1-q}}}{s^{n-\al p}}\right)^{\frac{1}{p-1}}\frac{ds}{s}\right]^q \le c \int_{B(x, t)} u^q d \sigma, 
\end{equation}
where $c$ depends only on $n$, $p$, $q$, and $\al$. 
\end{lemma}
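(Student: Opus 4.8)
The inequality to be proved couples the local mass $\sigma(B(x,t))$ against the tail of the intrinsic-potential integral and the truncated energy $\int_{B(x,t)} u^q\,d\sigma$. The natural strategy is to split the Wolff potential $\wa(u^q\,d\sigma)$ at radius $t$ and exploit the lower bound $u \ge \wa(u^q\,d\sigma)$ $d\sigma$-a.e. First I would fix $x$ and $t$, write $B=B(x,t)$, and $d\omega = u^q\,d\sigma$. From $u(y) \ge \wa\omega(y) \ge \int_0^t (\omega(B(y,r))/r^{n-\al p})^{1/(p-1)}\,dr/r$ and the fact that for $y \in B$ and $r<t$ one has $B(x,t/2)\subset B(y, \tfrac32 t)$ and more simply $\omega(B(y,r)) \ge$ a piece of $\omega$ concentrated near $y$, one extracts a first crude bound; but the cleaner route is to bound $u(y)$ from below on $B$ by the constant $c\,[\sigma(B)]^{1/(p-1)}\cdot(\text{something})$ only after invoking the converse estimate. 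So the real engine is Corollary~\ref{kappa-uB} together with Lemma~\ref{wolffestimate}(i): the latter gives $\kappa(B(x,s)) \le c\,(\int_{B(x,s)} u^q\,d\sigma)^{(p-1-q)/(q(p-1))}$ for every ball.

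\textbf{Key steps.} (1) Use $u \ge \wa(u^q\,d\sigma_B)$ on $B=B(x,t)$ together with the sub-multiplicativity/iteration from Lemma~\ref{Wolfflemma} (as in the proof of Theorem~\ref{thmlowest}) to get, on $B$, a pointwise lower bound of the form $u(y) \ge C\,(\wa\sigma_B(y))^{(p-1)/(p-1-q)}$, hence $\int_B u^q\,d\sigma \ge C \int_B (\wa\sigma_B)^{q(p-1)/(p-1-q)}\,d\sigma$. (2) Apply Lemma~\ref{wolffestimate}(i) with the measure $\sigma$ replaced by $\sigma_B$ and the solution $u_B$ of the truncated equation, or directly Corollary~\ref{kappa-uB}, to relate $\int_B u^q\,d\sigma$ to $\kappa(B)$; more to the point, for each radius $s$ one has $\kappa(B(x,s))^{q(p-1)/(p-1-q)} \le c\int_{B(x,s)} u^q\,d\sigma$. (3) Substitute this into the tail integral $\int_t^\infty [\kappa(B(x,s))^{q(p-1)/(p-1-q)}/s^{n-\al p}]^{1/(p-1)}\,ds/s$ and compare it to $\wa\omega$ evaluated at a point of $B$; specifically, for $s>t$ and $y\in B$ we have $B(y,2s)\supset B(x,s)$, so $\int_t^\infty (\omega(B(x,s))/s^{n-\al p})^{1/(p-1)}\,ds/s \le c\,\wa\omega(y) \le c\,u(y)$ for $d\sigma$-a.e. $y\in B$. (4) Raise to the $q$-th power, integrate over $B$ against $d\sigma$, and use $\omega(B(x,s)) = \int_{B(x,s)} u^q\,d\sigma \ge$ (lower bound on $u$ from step 1 on the inner ball) to pull out the factor $\sigma(B(x,t))$; averaging the resulting inequality over $y \in B$ with respect to $d\sigma$ and invoking $\inf_B u>0$ (from Lemma~\ref{average-wolff}) to keep constants finite gives \eqref{est1}.

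\textbf{Main obstacle.} The delicate point is bookkeeping the exponents so that the tail expression, which a priori involves $\kappa(B(x,s))$ raised to $q(p-1)/(p-1-q)$ and then to $1/(p-1)$, gets matched against $(\omega(B(x,s)))^{1/(p-1)}$; this requires knowing that $\kappa(B(x,s))^{q(p-1)/(p-1-q)}$ and $\omega(B(x,s)) = \int_{B(x,s)} u^q\,d\sigma$ are comparable \emph{up to a constant}, with the comparison in the direction needed (upper bound on $\kappa$ in terms of the energy from Lemma~\ref{wolffestimate}(i), and the reverse from a subsolution associated with $\sigma_{B(x,s)}$ via Corollary~\ref{kappa-uB}). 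Once this comparability is in hand, the integral becomes (up to constants) $\int_t^\infty (\omega(B(x,s))/s^{n-\al p})^{1/(p-1)}\,ds/s \le c\,u(y)$ $d\sigma$-a.e. on $B$, and the factor $\sigma(B(x,t))$ is produced by bounding $u$ from below on the concentric sub-ball used to estimate $\omega(B(x,t))$ — exactly the lower bound of Theorem~\ref{thmlowest} localized to $B(x,t)$. I expect the measure-zero exceptional set issues (where $M^\nu_\omega = \infty$) to be routine, handled as in the proof of Lemma~\ref{wolffestimate}, because $\inf_B u > 0$ forces $\sigma$ and $\omega$ to have comparable null sets on $B$.
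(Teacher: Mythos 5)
Your steps (2)--(4) are essentially the paper's proof: from $u\ge \wa(u^q\,d\sigma)$ $d\sigma$-a.e.\ and the containment $B(y,2s)\supset B(x,s)$ for $y\in B(x,t)$, $s\ge t$, the tail integral $\int_t^\infty\bigl(\omega(B(x,s))/s^{n-\al p}\bigr)^{1/(p-1)}\,ds/s$ (with $d\omega=u^q\,d\sigma$) is a $y$-independent lower bound for $c\,u(y)$ on $B(x,t)$, so raising to the power $q$ and integrating over $B(x,t)$ against $d\sigma$ produces the factor $\sigma(B(x,t))$, and the substitution $\bigl[\kappa(B(x,s))\bigr]^{\frac{q(p-1)}{p-1-q}}\le c\int_{B(x,s)}u^q\,d\sigma$ from Lemma~\ref{wolffestimate}(i) then yields \eqref{est1}. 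The extra ingredients you invoke --- the localized lower bound from Theorem~\ref{thmlowest}, the reverse estimate of Corollary~\ref{kappa-uB}, and $\inf_B u>0$ --- are not needed: only the one-sided bound on $\kappa$ is used, and $\sigma(B(x,t))$ appears automatically from integrating the constant left-hand side.
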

\begin{proof} By Lemma~\ref{wolffestimate}, $ \kappa(B(x, s))< \infty$ for all $x \in \R^n$ and $s>0$. 
Hence it is enough to prove \eqref{necessary1} for $t$ large enough. Without loss of generality we can assume that $\sigma\not= 0$, and $\sigma(B(x, t))>0$. 
Let $d\omega=u^qd\sigma $. We estimate 
\begin{equation*} 
 \begin{aligned}
\int_{B(x, t)}u^qd\sigma & \ge \int_{B(x, t)}(\w\omega)^qd\sigma\\ & \ge \int_{B(x, t)}\left[\int_t^{\infty} \left(\frac{ \omega(B(y, s))}{s^{n-\al p}}\right)^{\frac{1}{p-1}}\frac{ds}{s}\right]^qd\sigma(y).
\end{aligned}
\end{equation*}
Since $B(y, 2s) \supset B(x, s)$ if $ s \geq t $ and $ y \in B(x, t)$, it follows, 
\begin{equation*} 
 \begin{aligned}
& \int_{B(x, t)}\left[\int_t^{\infty} \left(\frac{ \omega(B(y, s))}{s^{n-\al p}}\right)^{\frac{1}{p-1}}\frac{ds}{s}\right]^qd\sigma(y) \\& = 2^{-\frac{(n-\al p)q}{p-1}} \int_{B(x, t)}\left[\int_t^{\infty} \left(\frac{ \omega(B(y, 2s))}{s^{n-\al p}}\right)^{\frac{1}{p-1}}\frac{ds}{s}\right]^qd\sigma(y) \\
& \ge  2^{-\frac{(n-\al p)q}{p-1}} \int_{B(x, t)}\left[\int_t^{\infty} \left(\frac{ \omega(B(x, s))}{s^{n-\al p}}\right)^{\frac{1}{p-1}}\frac{ds}{s}\right]^qd\sigma(y) \\
& = 2^{-\frac{(n-\al p)q}{p-1}}\sigma(B(x, t))\left[\int_t^{\infty} \left(\frac{ \omega(B(x, s))}{s^{n-\al p}}\right)^{\frac{1}{p-1}}\frac{ds}{s}\right]^q \\
& \ge c \,  \sigma(B(x, t))\left[\int_t^{\infty} \left(\frac{\left[\kappa(B(x, s))\right]^{\frac{q(p-1)}{p-1-q}}}{s^{n-\al p}}\right)^{\frac{1}{p-1}}\frac{ds}{s}\right]^q, 
\end{aligned}
\end{equation*}
where $c=c(n, p, q, \al)$; note that in the last line we used \eqref{kappa-B}. 
Hence,  \eqref{est1} holds, which yields \eqref{necessary1} for all $x \in \R^n$ 
and $t>0$. 
\end{proof}

By picking $t$ in \eqref{est1} large enough to ensure that $\sigma(B(x,t))>0$, we deduce the following corollary. 
\begin{Corollary} \label{Cor-kap} 
Under the assumptions of Lemma~\ref{Crx}, for all $x \in \R^n$ and $t >0$,
\begin{equation} \label{necessary1}
\int_t^{\infty} \left(\frac{\left[\kappa(B(x, s))\right]^{\frac{q(p-1)}{p-1-q}}}{s^{n-\al p}}\right)^{\frac{1}{p-1}}\frac{ds}{s} < \infty.
\end{equation}
\end{Corollary}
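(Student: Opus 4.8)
The plan is to read the corollary off directly from Lemma~\ref{Crx}, using only the additional fact, already established in Lemma~\ref{wolffestimate}(i), that the localized embedding constant $\kappa(B(x,s))$ is finite for every $x\in\R^n$ and every $s>0$. The point of the lead-in remark is simply that the factor $\sigma(B(x,t))$ on the left of \eqref{est1} is harmless once it is strictly positive, and that the finiteness of the right-hand side is automatic from $u\in L^q_{{\rm loc}}(\R^n,d\sigma)$.

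Concretely, I would proceed as follows. Fix $x\in\R^n$. Since $\sigma\not=0$ by our standing assumption, there is a radius $t_0=t_0(x)>0$ such that $\sigma(B(x,t))>0$ for all $t\ge t_0$: if $B(y_0,r_0)$ carries positive mass, then $B(x,t)\supset B(y_0,r_0)$ whenever $t>|x-y_0|+r_0$. For such $t$ the right-hand side of \eqref{est1}, namely $c\int_{B(x,t)}u^q\,d\sigma$, is finite because $u\in L^q_{{\rm loc}}(\R^n,d\sigma)$. Dividing \eqref{est1} by $\sigma(B(x,t))>0$ and taking $q$-th roots gives
\[
\int_t^{\infty}\left(\frac{[\kappa(B(x,s))]^{\frac{q(p-1)}{p-1-q}}}{s^{n-\al p}}\right)^{\frac{1}{p-1}}\frac{ds}{s}\le\left(\frac{c}{\sigma(B(x,t))}\int_{B(x,t)}u^q\,d\sigma\right)^{1/q}<\infty
\]
for all $t\ge t_0$, which is \eqref{necessary1} for large radii. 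To upgrade this to every $t>0$, I would note that the integrand in \eqref{necessary1} is nonnegative, so it suffices to check $\int_t^{t_0}(\cdots)\frac{ds}{s}<\infty$ for $0<t<t_0$; but on the compact interval $[t,t_0]$ the function $s\mapsto s^{-\frac{n-\al p}{p-1}-1}$ is bounded, and by monotonicity of $\kappa$ in the set together with Lemma~\ref{wolffestimate}(i) one has $\kappa(B(x,s))\le\kappa(B(x,t_0))<\infty$, so the integrand is bounded there and the integral over $[t,t_0]$ is finite. Adding the two pieces yields \eqref{necessary1} for all $x\in\R^n$ and $t>0$.

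There is essentially no obstacle here: the corollary is an immediate consequence of Lemma~\ref{Crx}, and the only step needing a word of justification is the passage from ``some large $t$'' to ``all $t>0$'', which rests entirely on the finiteness of the embedding constants $\kappa(B(x,s))$ provided by Lemma~\ref{wolffestimate}(i).
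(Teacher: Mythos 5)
Your argument is correct and is essentially the paper's own: the paper likewise deduces the corollary from \eqref{est1} by taking $t$ large enough that $\sigma(B(x,t))>0$, having already noted (in the proof of Lemma~\ref{Crx}) that finiteness of $\kappa(B(x,s))$ from Lemma~\ref{wolffestimate} reduces \eqref{necessary1} to the case of large $t$. You have merely written out explicitly the small-$t$ reduction (monotonicity of $\kappa$ and boundedness of the kernel on a compact interval) that the paper leaves implicit.
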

The next lemma is an analogue of Corollary~\ref{inf} for potentials $\mathbf{K}_{\alpha, p} \sigma$.
\begin{lemma} \label{C0} Let $1<p<\infty$, $0<q<p-1$, and $0<\alpha<\frac{n}{p}$. Let 
$\sigma \in \M$. 
Suppose that \eqref{necessary1} holds for $x=0$ and $t=1$, i.e., 
\begin{equation} \label{suffcond}
\int_1^{\infty} \left(\frac{\left[\kappa(B(0,s))\right]^{\frac{q(p-1)}{p-1-q}}}{s^{n-\al p}}\right)^{\frac{1}{p-1}}\frac{ds}{s} < \infty. 
\end{equation}
Then  \eqref{necessary1} holds for all $x \in \R^n$, $t>0$, and $\mathbf{K}_{\alpha, p} 
\sigma \in L^q_{{\rm loc}}(\R^n, d \sigma)$.  
\end{lemma}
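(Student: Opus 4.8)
The plan is to parallel the argument used for Wolff potentials in Corollary~\ref{inf}(ii), with the measure-theoretic quantity $\sigma(B(0,r))/r^{n-\alpha p}$ replaced by the embedding-constant quantity $\kappa(B(0,r))^{q(p-1)/(p-1-q)}/r^{n-\alpha p}$. The key structural fact we need is a comparison between $\kappa(B(x,r))$ and $\kappa(B(0,2r))$ for $|x|<r$: since $B(x,r)\subset B(0,2r)$ and the least constant in the localized inequality \eqref{kap-local} is monotone under inclusion of the domain $E$, we have $\kappa(B(x,r))\le\kappa(B(0,2r))$ whenever $B(x,r)\subset B(0,2r)$. This monotonicity of $E\mapsto\kappa(E)$ is immediate from the definition, since restricting $\sigma$ to a smaller set only decreases the left-hand side of \eqref{kap-local}.

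First I would show that \eqref{suffcond} implies \eqref{necessary1} for all $x\in\R^n$ and $t>0$, following the two-case split in the proof of Corollary~\ref{inf}(ii). Fix $x\ne 0$ (the case $x=0$ being immediate). For $r>|x|$ we have $B(x,r)\subset B(0,2r)$, hence $\kappa(B(x,r))\le\kappa(B(0,2r))$, so
\begin{equation*}
\int_{|x|}^{\infty}\left(\frac{\kappa(B(x,r))^{\frac{q(p-1)}{p-1-q}}}{r^{n-\alpha p}}\right)^{\frac{1}{p-1}}\frac{dr}{r}\le\int_{|x|}^{\infty}\left(\frac{\kappa(B(0,2r))^{\frac{q(p-1)}{p-1-q}}}{r^{n-\alpha p}}\right)^{\frac{1}{p-1}}\frac{dr}{r},
\end{equation*}
and after the substitution $\rho=2r$ this is a tail of the convergent integral in \eqref{suffcond} (using that $\kappa(B(0,\rho))$ is nondecreasing in $\rho$, so tails starting at $\min(1,2|x|)$ are controlled by the one starting at $1$). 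For $0<t<|x|$ one splits $\int_t^\infty=\int_t^{|x|}+\int_{|x|}^\infty$; on $t\le r\le|x|$ we have $B(x,r)\subset B(0,2|x|)$, hence $\kappa(B(x,r))\le\kappa(B(0,2|x|))<\infty$ (finiteness by Lemma~\ref{wolffestimate}, or trivially since $\kappa(B)$ is finite on balls under the running hypothesis), so the first piece is a finite integral of a bounded integrand over a bounded interval. Combining, \eqref{necessary1} holds for every $x$ and every $t>0$.

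Next I would prove $\mathbf{K}_{\alpha,p}\sigma\in L^q_{\rm loc}(\R^n,d\sigma)$. Fix a ball $B=B(0,R)$; I want $\int_B(\mathbf{K}_{\alpha,p}\sigma)^q\,d\sigma<\infty$. Split the defining integral \eqref{intrinsic-K} at $s=R$ (or at $s=2R$) into $\mathbf{K}_{\alpha,p}\sigma(x)=P_1(x)+P_2(x)$. For $x\in B$ and $s\ge R$ we have $B(x,s)\subset B(0,2s)$, so $\kappa(B(x,s))\le\kappa(B(0,2s))$, whence $P_2(x)\le c\int_{R}^\infty\big(\kappa(B(0,2s))^{q(p-1)/(p-1-q)}/s^{n-\alpha p}\big)^{1/(p-1)}\,ds/s$, a finite constant independent of $x\in B$ by \eqref{suffcond}; thus $\int_B P_2^q\,d\sigma\le c\,\sigma(B)<\infty$. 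The main work is the near part $P_1$: for $x\in B$ and $0<s<R$ the ball $B(x,s)$ is contained in $2B$, and monotonicity gives $\kappa(B(x,s))\le\kappa(2B)<\infty$; but a crude bound is not enough here, so instead I would recognize $P_1$ as essentially a truncated Wolff-type potential and use Lemma~\ref{wolffestimate} to relate $\kappa(B(x,s))$ back to the local mass $\int_{B(x,s)}u^q\,d\sigma$ of any solution—however, since the present lemma does not assume a solution exists, the cleaner route is to bound $P_1$ directly: by \eqref{kappa-B} applied with the canonical subsolution $u_{2B}$ from Corollary~\ref{kappa-uB}, or more simply by the converse estimate \eqref{kappa-C}, one gets $\kappa(B(x,s))^{q(p-1)/(p-1-q)}\le c\,(\mathbf{W}_{\alpha,p}\sigma_{2B}(x))^{\text{(power)}}\cdot(\dots)$; integrating in $s$ over $(0,R)$ produces a constant multiple of a power of $\mathbf{W}_{\alpha,p}\sigma_{2B}$, which lies in $L^q(d\sigma)$ on $B$ by \eqref{wolfflocal} (local Wolff integrability, valid here since $\kappa(2B)<\infty$). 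Hence $\int_B P_1^q\,d\sigma<\infty$ as well, and the lemma follows.

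The main obstacle I anticipate is the near-part estimate for $P_1$: getting past the trivial bound $\kappa(B(x,s))\le\kappa(2B)$ requires exploiting how $\kappa(B(x,s))$ shrinks as $s\to 0$, and the cleanest mechanism for that is to trade $\kappa$ for a localized Wolff potential via \eqref{kappa-B}/\eqref{kappa-C} and then invoke the local Wolff integrability \eqref{wolfflocal}. One must be careful that \eqref{kappa-B} was stated for solutions of \eqref{integraleq}; to apply it unconditionally one uses instead the canonical finite-energy subsolution $u_{2B}$ (which exists because $\sigma_{2B}$ has finite energy once $\kappa(2B)<\infty$, by Theorem~\ref{Lqglobal}) together with Corollary~\ref{kappa-uB}, thereby bounding $\kappa(B(x,s))$ from above by the local mass of $u_{2B}$ on $B(x,s)$ and then summing in $s$. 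The rest is routine Fubini/Minkowski bookkeeping of the kind already carried out in Lemma~\ref{average-wolff}.
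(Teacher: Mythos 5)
Your argument is essentially the paper's: the extension of \eqref{necessary1} to all $x$ and $t$ is proved by the same ball-inclusion/monotonicity device (the paper uses $B(x,s)\subset B(0,s+|x|)$ and a change of variable rather than $B(x,r)\subset B(0,2r)$, which is an immaterial difference), and the local $L^q(d\sigma)$ bound is obtained by the same near/far splitting at the radius of the ball, with the far part handled by the first step and the near part controlled exactly as in your closing paragraph --- by the solution $u_B$ of $u_B=\mathbf{W}_{\alpha,p}(u_B^q\,d\sigma_B)$ for the doubled ball (Theorem~\ref{Lqglobal}, legitimate because \eqref{suffcond} together with monotonicity of $\kappa$ forces $\kappa$ to be finite on every ball) combined with \eqref{kappa-B} of Lemma~\ref{wolffestimate} applied with $\sigma_B$ in place of $\sigma$, which gives $\mathbf{K}_{\alpha,p}\sigma_B\le c\,\mathbf{W}_{\alpha,p}(u_B^q\,d\sigma_B)=c\,u_B\in L^q(d\sigma_B)$. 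Only your parenthetical shortcuts are off: \eqref{kappa-C} runs in the wrong direction (it bounds the local mass by $\kappa$, not $\kappa$ by the mass), and the intermediate suggestion to bound $\kappa(B(x,s))$ by a power of $\mathbf{W}_{\alpha,p}\sigma_{2B}$ and invoke \eqref{wolfflocal} is not what the cited estimates yield --- but since you ultimately state the correct mechanism (\eqref{kappa-B} applied to $u_{2B}$, then summation in $s$), these are citation slips rather than gaps.
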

\begin{proof}
Notice that if \eqref{suffcond} holds, then obviously, for every $t>0$,  
$$\int_t^{\infty} \left(\frac{\left[\kappa(B(0, s))\right]^{\frac{q(p-1)}{p-1-q}}}{s^{n-\al p}}\right)^{\frac{1}{p-1}}\frac{ds}{s} < \infty. $$ 
For a fixed $x \in \R^n $, clearly $B(x, s) \subset B(0, s + |x|)$, so that 
\begin{equation*} 
 \begin{aligned}
 \int_t^{\infty} \left(\frac{\left[\kappa(B(x, s))\right]^{\frac{q(p-1)}{p-1-q}}}{s^{n-\al p}}\right)^{\frac{1}{p-1}}\frac{ds}{s} & \le \int_t^{\infty} \left(\frac{\left[\kappa(B(0, s+|x|))\right]^{\frac{q(p-1)}{p-1-q}}}{s^{n-\al p}}\right)^{\frac{1}{p-1}}\frac{ds}{s} \\ & =\int_{t+|x|}^{\infty} \left(\frac{\left[\kappa(B(0, r))\right]^{\frac{q(p-1)}{p-1-q}}}{(r-|x|)^{n-\al p}}\right)^{\frac{1}{p-1}}\frac{dr}{r-|x|}.
\end{aligned}
\end{equation*}
If $t>|x|$, then $r - |x| > \frac{1}{2}r$ if $r \ge t + |x|$. Hence, 
\begin{equation*} 
 \begin{aligned}
& \int_{t+|x|}^{\infty} \left(\frac{\left[\kappa(B(0, r))\right]^{\frac{q(p-1)}{p-1-q}}}{(r-|x|)^{n-\al p}}\right)^{\frac{1}{p-1}}\frac{dr}{r-|x|}\\& \le 2^{\frac{n -\al p}{ p-1}+1} \int_{t+|x|}^{\infty} \left(\frac{\left[\kappa(B(0, r))\right]^{\frac{q(p-1)}{p-1-q}}}{r^{n-\al p}}\right)^{\frac{1}{p-1}}\frac{dr}{r} < \infty.  
\end{aligned}
\end{equation*}
In the case $t \le |x|$, 
\begin{equation*} 
 \begin{aligned}
& \int_{t+|x|}^{\infty} \left(\frac{\left[\kappa(B(0, r))\right]^{\frac{q(p-1)}{p-1-q}}}{(r-|x|)^{n-\al p}}\right)^{\frac{1}{p-1}}\frac{dr}{r-|x|} \\& = \int_{t+|x|}^{2|x|} \left(\frac{\left[\kappa(B(0, r))\right]^{\frac{q(p-1)}{p-1-q}}}{(r-|x|)^{n-\al p}}\right)^{\frac{1}{p-1}}\frac{dr}{r-|x|} \\
& + \int_{2|x|}^{\infty} \left(\frac{\left[\kappa(B(0, r))\right]^{\frac{q(p-1)}{p-1-q}}}{(r-|x|)^{n-\al p}}\right)^{\frac{1}{p-1}}\frac{dr}{r-|x|}\\
& \le \kappa(B(0, 2|x|)^{\frac{q}{p-1-q}}\int_{t+|x|}^{2|x|} \left(\frac{ 1}{(r-|x|)^{n-\al p}}\right)^{\frac{1}{p-1}}\frac{dr}{r-|x|}\\
& + 2^{\frac{n -\al p}{ p-1}+1} \int_{2|x|}^{\infty} \left(\frac{\left[\kappa(B(0, r))\right]^{\frac{q(p-1)}{p-1-q}}}{r^{n-\al p}}\right)^{\frac{1}{p-1}}\frac{dr}{r} < \infty.
\end{aligned}
\end{equation*}
Combining the preceding estimates proves \eqref{necessary1} for 
all $x \in \R^n$ and $t>0$.

To show that $\mathbf{K}_{\alpha, p} 
\sigma \in L^q_{{\rm loc}}(\R^n, d \sigma)$, fix a ball $B(x, t)$ and let $B=B(x, 2t)$. 
Splitting $\mathbf{K}_{\alpha, p} \sigma$ into two parts, we estimate 
\begin{equation*} 
 \begin{aligned}
I& = \int_{B(x, t)} \left[\int_0^{t} \left(\frac{\left[\kappa(B(y, s))\right]^{\frac{q(p-1)}{p-1-q}}}{s^{n-\al p}}\right)^{\frac{1}{p-1}}\frac{ds}{s}\right]^q d \sigma(y), \\ II & =\int_{B(x, t)}  \left[\int_t^{\infty} \left(\frac{\left[\kappa(B(y, s))\right]^{\frac{q(p-1)}{p-1-q}}}{s^{n-\al p}}\right)^{\frac{1}{p-1}}\frac{ds}{s}\right]^q d \sigma(y).  
\end{aligned}
\end{equation*}
Clearly, in $II$ we have  $B(y, s) \subset B(x, 2s)$, and hence, by \eqref{necessary1}, 
$$
II \le c \sigma(B(x,t))   \left[\int_t^{\infty} \left(\frac{\left[\kappa(B(x, 2s))\right]^{\frac{q(p-1)}{p-1-q}}}{s^{n-\al p}}\right)^{\frac{1}{p-1}}\frac{ds}{s}\right]^q < \infty.
$$
In $I$, we have $B(y, s)\subset B$, and consequently, 
$$
I \le \int_B (\mathbf{K}_{\alpha, p} \sigma_B)^q d \sigma. 
$$
Since  $\kappa(B)<\infty$, by Theorem~\ref{Lqglobal} with 
$\sigma_B$ in place of $\sigma$, the equation $u_B=\wa (u_B^q d \sigma_B)$ has a solution 
$u_B$ such that $\int_B u_B^q d \sigma < \infty$. Hence by Lemma~\ref{wolffestimate} with 
$\sigma_B$ in place of $\sigma$, 
$$ \left[\kappa(B(y,s)\cap B)\right]^{\frac{q(p-1)}{p-1-q}} \le c  \int_{B(y,s)} u_B^q  d \sigma_B,$$ where $c=c(p, q, \al, n)$. From this we obtain 
$$
\int_B (\mathbf{K}_{\alpha, p} \sigma_B)^q d \sigma  \le  c^{\frac{q}{p-1}} \int_B \left[\wa (u_B^q d \sigma_B)\right]^q d \sigma=
c^{\frac{q}{p-1}} \int_B u_B^q  d \sigma <\infty.
$$
This proves  that both $I$ and $II$ are finite, i.e., $\int_B \left(\mathbf{K}_{\alpha, p} \sigma\right)^q d \sigma<\infty$. 
\end{proof}

\begin{theorem} \label{main} Let $1<p<\infty$, $0<q<p-1$, and $0<\al < \frac{n}{p}$.
Let $\sigma \in \M$. Suppose that both \eqref{wolff01} and \eqref{suffcond} hold. Then there exists a solution $u \in L^q_{{\rm loc}}(\R^n, d\sigma)$ to \eqref{integral} such that $ \liminf_{x \to \infty} u(x)=0 $, and $u$ satisfies 
the inequalities 
\begin{equation} \label{alpha}
C^{-1}   \left[\mathbf{K}_{\al, p} \sigma + \left(\wa \sigma\right)^{\frac{p-1}{p-1-q}}\right] 
\le u \le C  \left[\mathbf{K}_{\al, p} \sigma + \left(\wa \sigma\right)^{\frac{p-1}{p-1-q}}\right],  
\end{equation}
where $C>0$ is a  constant which depends only on $n$, $p$, $q$, and $\al$. 

The lower bound in \eqref{alpha} holds for any $u \in L^q_{{\rm loc}}(\R^n, d\sigma)$ 
which is a nontrivial solution of inequality \eqref{integraleq}. 
\end{theorem}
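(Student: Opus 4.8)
The plan is to prove the two inequalities in \eqref{alpha} by different routes: the lower bound is a consequence of the estimates already established and holds for \emph{every} nontrivial solution of \eqref{integraleq}, whereas the matching upper bound is obtained together with existence by a monotone iteration confined between an explicit subsolution and an explicit supersolution.

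\textbf{Lower bound.} Let $0\le u\in L^q_{\rm loc}(\R^n,d\sigma)$ satisfy $u\ge\wa(u^q d\sigma)$ $d\sigma$-a.e. with $u\not\equiv 0$, and set $d\omega=u^q d\sigma$. Theorem~\ref{thmlowest} already gives $u\ge C\,(\wa\sigma)^{\frac{p-1}{p-1-q}}$ $d\sigma$-a.e. For the intrinsic-potential term, Lemma~\ref{wolffestimate}(i) applied on each ball $B(x,t)$ yields $\kappa(B(x,t))^{\frac{q(p-1)}{p-1-q}}\le c\,\omega(B(x,t))$; substituting this into the definition of $\wa\omega$,
\begin{equation*}
u(x)\ \ge\ \wa\omega(x)\ =\ \int_0^\infty\Big(\frac{\omega(B(x,t))}{t^{n-\al p}}\Big)^{\frac1{p-1}}\frac{dt}t\ \ge\ c^{-1}\,\mathbf{K}_{\al,p}\sigma(x)
\end{equation*}
for $d\sigma$-a.e. $x$. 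Adding the two estimates gives the lower bound in \eqref{alpha}.

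\textbf{Existence and the upper bound.} As in the proof of Theorem~\ref{Lqglobal}, Lemma~\ref{Wolfflemma} shows that $\underline u:=c_0(\wa\sigma)^{\frac{p-1}{p-1-q}}$ satisfies $\wa(\underline u^q d\sigma)\ge\underline u$ for a sufficiently small $c_0=c_0(n,p,q,\al)$, and $\underline u\in L^q_{\rm loc}(\R^n,d\sigma)$ by Corollary~\ref{inf} together with the localized Wolff inequality \eqref{wolfflocal} applied to $\sigma_B$ (the constant $\kappa(B)$ being finite by Lemma~\ref{wolffestimate}). The crucial step is to show that
\begin{equation*}
\overline u:=C_1\Big[\mathbf{K}_{\al,p}\sigma+(\wa\sigma)^{\frac{p-1}{p-1-q}}\Big]
\end{equation*}
is a supersolution, i.e. $\wa(\overline u^q d\sigma)\le\overline u$, for $C_1=C_1(n,p,q,\al)$ large enough. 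Granting this, one runs the iteration $u_0=\underline u$, $u_{j+1}=\wa(u_j^q d\sigma)$: monotonicity of $\wa$ and of $t\mapsto t^q$ gives $\underline u=u_0\le u_1\le\cdots$, while $u_0\le\overline u$ together with the supersolution property propagates by induction to $u_j\le\overline u$ for all $j$. Since $\overline u^q\in L^1_{\rm loc}(\R^n,d\sigma)$ --- for the $\mathbf{K}_{\al,p}\sigma$ part this is Lemma~\ref{C0}, for the $\wa\sigma$ part it follows as for $\underline u$ --- two applications of the Monotone Convergence Theorem show that $u=\lim_j u_j$ solves $u=\wa(u^q d\sigma)$ with $\underline u\le u\le\overline u$. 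In particular $u\not\equiv 0$, so the lower bound above applies and \eqref{alpha} follows; moreover $u=\wa\omega$ with $\omega=u^q d\sigma$ locally finite and $u<\infty$ on a set of positive measure, so $\wa\omega\not\equiv+\infty$ and hence $\liminf_{x\to\infty}u(x)=0$ by Corollary~\ref{inf}(iii) applied to $\omega$.

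\textbf{The main obstacle: $\overline u$ is a supersolution.} Using $\overline u^q\le c(q)\,C_1^q\big[(\mathbf{K}_{\al,p}\sigma)^q+(\wa\sigma)^{\frac{q(p-1)}{p-1-q}}\big]$ and the elementary inequality $\wa(\mu_1+\mu_2)\le c(p)\big(\wa\mu_1+\wa\mu_2\big)$, the claim reduces to the two pointwise bounds
\begin{gather*}
\wa\!\Big[(\wa\sigma)^{\frac{q(p-1)}{p-1-q}}d\sigma\Big]\ \le\ C_2\Big[\mathbf{K}_{\al,p}\sigma+(\wa\sigma)^{\frac{p-1}{p-1-q}}\Big],\\
\wa\!\Big[(\mathbf{K}_{\al,p}\sigma)^q d\sigma\Big]\ \le\ C_2\Big[\mathbf{K}_{\al,p}\sigma+(\wa\sigma)^{\frac{p-1}{p-1-q}}\Big],
\end{gather*}
with $C_2=C_2(n,p,q,\al)$; these then give $\wa(\overline u^q d\sigma)\le c(p,q)\,C_2\,C_1^{\frac q{p-1}-1}\,\overline u$, and since $q<p-1$ the exponent $\frac q{p-1}-1$ is negative, so $C_1$ large yields $\wa(\overline u^q d\sigma)\le\overline u$ --- this is exactly where sublinearity enters. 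Each of the two bounds is proved by splitting the outer integral $\int_0^\infty\frac{dt}t$ at the running scale $t$: on the tail $s\ge t$ one uses that $\wa\sigma$ (resp. $\mathbf{K}_{\al,p}\sigma$) on $B(x,t)$ is dominated by a fixed multiple of its value at $x$, via Lemma~\ref{average-wolff} and the inclusion $B(y,s)\subset B(x,2s)$, which after Young's inequality with exponents $\frac{p-1}q$ and $\frac{p-1}{p-1-q}$ produces the $(\wa\sigma)^{\frac{p-1}{p-1-q}}$ term. On the local part $s<t$ one replaces the measure by its restriction $\sigma_{B(x,Ct)}$ and invokes the building-block estimates $\int_B(\wa\sigma_B)^{\frac{q(p-1)}{p-1-q}}d\sigma\lesssim\kappa(B)^{\frac{q(p-1)}{p-1-q}}$ and $\int_B(\mathbf{K}_{\al,p}\sigma_B)^q d\sigma\lesssim\kappa(B)^{\frac{q(p-1)}{p-1-q}}$ (coming from Theorem~\ref{Lqglobal}, Corollary~\ref{kappa-uB} and the argument in Lemma~\ref{C0}, each combined with the lower bound of Theorem~\ref{thmlowest}); reassembling the outer integral reproduces the $\mathbf{K}_{\al,p}\sigma$ term. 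The second bound --- a self-improvement of the intrinsic potential $\mathbf{K}_{\al,p}\sigma$ parallel to Lemma~\ref{Wolfflemma} for ordinary Wolff potentials --- is the most delicate point, since it requires controlling the localized embedding constants $\kappa(B(y,s)\cap B)$ uniformly through the covering and maximal-function machinery of Lemma~\ref{wolffestimate}.
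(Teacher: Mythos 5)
Your proposal is correct, and while the lower bound is proved exactly as in the paper (Theorem~\ref{thmlowest} for the $(\wa\sigma)^{\frac{p-1}{p-1-q}}$ term, and Lemma~\ref{wolffestimate}(i) fed back into $u\ge\wa(u^qd\sigma)$ for the $\mathbf{K}_{\al,p}\sigma$ term), your existence/upper-bound half is organized genuinely differently. The paper runs the same iteration $u_0=c_0(\wa\sigma)^{\frac{p-1}{p-1-q}}$, $u_{j+1}=\wa(u_j^qd\sigma)$, but never verifies a pointwise supersolution property of the majorant: it instead estimates $\omega_j(B(x,t))$ and the tail quantities $A_j(x,t)=\int_t^\infty\bigl(\omega_j(B(x,s))/s^{n-\al p}\bigr)^{\frac{1}{p-1}}\frac{ds}{s}$, derives by H\"older the recursion $A_j\le c\,A_{j-1}^{\frac{q}{p-1}}M(x,t)^{\frac{p-1-q}{p-1}}$, closes it using $A_{j-1}\le A_j<\infty$ and $q<p-1$ to get $A_j\le C\,M(x,t)$ uniformly in $j$, and only lets $t\to0$ in the limit function. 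You instead show that $\overline u=C_1\bigl[\mathbf{K}_{\al,p}\sigma+(\wa\sigma)^{\frac{p-1}{p-1-q}}\bigr]$ is itself a supersolution, which requires the extra pointwise inequality $\wa\bigl[(\mathbf{K}_{\al,p}\sigma)^qd\sigma\bigr]\lesssim \mathbf{K}_{\al,p}\sigma+(\wa\sigma)^{\frac{p-1}{p-1-q}}$ that the paper never states; your sketch of it is nevertheless sound, since the scale-splitting plus the building blocks $\int_B(\wa\sigma_B)^{\frac{(p-1)q}{p-1-q}}d\sigma\lesssim\kappa(B)^{\frac{q(p-1)}{p-1-q}}$ and $\int_B(\mathbf{K}_{\al,p}\sigma_B)^qd\sigma\lesssim\kappa(B)^{\frac{q(p-1)}{p-1-q}}$ (Theorem~\ref{Lqglobal} for $\sigma_B$, Lemma~\ref{wolffestimate} applied to $\sigma_B$, Corollary~\ref{kappa-uB}, Theorem~\ref{thmlowest}) and a final Young inequality are exactly the ingredients the paper uses in \eqref{firstfinite-om} and in Lemma~\ref{C0}. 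Your route buys an explicit barrier and a shorter convergence argument; the paper's route buys that only tail estimates for the iterates are ever needed, so the delicate self-improvement of the intrinsic potential is bypassed. One small correction: at the start of the construction you cannot take the finiteness of $\kappa(B)$ from Lemma~\ref{wolffestimate}, since that lemma presupposes a nontrivial solution, which is what you are constructing; as in the paper, $\kappa(B)<\infty$ for every ball follows directly from hypothesis \eqref{suffcond} together with the monotonicity of $R\mapsto\kappa(B(0,R))$.
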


\begin{proof}
Let $u_0=c_0\left(\wa\sigma\right)^{\frac{p-1}{p-1-q}}$, where $c_0$ is a constant which will be chosen later. We construct a sequence $\{u_j\}$ as follows:   
$$u_{j+1}=\wa(u^q_j d\sigma), \quad j=0, 1, 2, \ldots .$$
Choosing $c_0$ small enough and using Lemma \ref{Wolfflemma} as in the proof of Theorem~\ref{Lqglobal}, we ensure that  $u_j \le u_{j+1}$. 

We need to verify that $u_j$ are well defined, i.e., 
$u_j \in L^q_{{\rm loc}} (\R^n, d\sigma)$.  We set $d \omega_0 = u_0^q d \sigma$. 
Let us first show that, for all $x\in \R^n$ and $t >0$, 
\begin{equation} \label{firstfinite-om}
\begin{aligned}
\omega_0(B(x, t))& =\int_{B(x, t)} u_0^q \, d \sigma \le c \,  \left[\kappa(B(x, 2t))\right]^{\frac{q(p-1)}{p-1-q}} \\ & +  c \, \left( \int_{t}^{\infty} \left(\frac{\sigma(B(x, s))}{s^{n- \al p}}\right)^{\frac{1}{p-1}} \frac{ds}{s}\right)^{\frac{q(p-1)}{p-1-q}} \, \sigma(B(x, t)),  
\end{aligned}
\end{equation}
where $c$ depends only on $n, p, q, \al$. 
We set $B=B(x, 2t)$, and denote by $B^{c}$ the complement of $B$ in $\R^n.$ Clearly, for $y \in B(x,t)$ and $0<r\le t$, $B^c \cap B(y, r)=\emptyset$. Hence, for $y \in B(x,t)$, 
 \begin{equation*}
\begin{aligned}
 \wa\sigma_{B^c}(y) & = \int_{0}^{\infty} \left(\frac{\sigma(B^c \cap B(y, r))}{r^{n- \al p}}\right)^{\frac{1}{p-1}} \frac{dr}{r}\\& = \int_{t}^{\infty} \left(\frac{\sigma(B^c \cap B(y, r))}{r^{n- \al p}}\right)^{\frac{1}{p-1}} \frac{dr}{r}.
  \end{aligned}
 \end{equation*}

If $r \ge t$, then $B(y, r) \subset B(x, 2r)$, and consequently 
\begin{equation*} 
\begin{aligned}
& \wa\sigma_{B^c}(y) \le \int_{t}^{\infty} \left(\frac{\sigma(B^c \cap B(x,2r))}{r^{n- \al p}}\right)^{\frac{1}{p-1}} \frac{dr}{r}\\
& \le \int_{t}^{\infty} \left(\frac{\sigma(B(x, 2r))}{r^{n- \al p}}\right)^{\frac{1}{p-1}} \frac{dr}{r} \le 2^{\frac{n-\al p}{p-1}} \, \int_{t}^{\infty} \left(\frac{\sigma(B(x, s))}{s^{n- \al p}}\right)^{\frac{1}{p-1}} \frac{ds}{s}. 
  \end{aligned}
\end{equation*}
From this we deduce,  
\begin{equation*}
\begin{aligned}
& \int_{B(x, t)} \left(\wa\sigma\right)^{\frac{q(p-1)}{p-1-q}}d\sigma \le c \int_{B(x, t)} 
\left(\wa\sigma_B\right)^{\frac{q(p-1)}{p-1-q}}d\sigma \\
 & + c\int_{B(x, t)} \left(\wa\sigma_{B^c}\right)^{\frac{q(p-1)}{p-1-q}}d\sigma 
 \le c \int_B \left(\wa\sigma_B\right)^{\frac{q(p-1)}{p-1-q}}d\sigma \\
 & + c \, \left( \int_{t}^{\infty} \left(\frac{\sigma(B(x, s))}{s^{n- \al p}}\right)^{\frac{1}{p-1}} \frac{ds}{s}\right)^{\frac{q(p-1)}{p-1-q}} \, \sigma(B(x, t)).
  \end{aligned}
 \end{equation*}

It follows from \eqref{suffcond} 
that $\kappa(B) < \infty$. Using Theorem \ref{Lqglobal} with $\sigma_B$ in place of $\sigma$, we see that the equation $u_B=\wa(u_B^q d\sigma_B)$ has a solution $u_B \in L^q(\R^n, d\sigma_B)$. By Theorem~\ref{thmlowest}, $u_B \ge C\left(\wa\sigma_B\right)^{\frac{p-1}{p-1-q}}$. On the other hand, by Corollary~\ref{kappa-uB}, 
$$
\int_B u_B^q \, d \sigma \le \left[\kappa(B)\right]^{\frac{q(p-1)}{p-1-q}}. 
$$
Combining  the preceding estimates proves \eqref{firstfinite-om}. In particular,  this yields $u_0 \in L^q_{{\rm loc}} (\R^n, d\sigma)$. 

We next estimate 
\begin{equation} \label{Axt}
 \begin{aligned}
A_0(x, t) & :=\int_t^{\infty} \left( \frac{\omega_0(B(x, s))}{s^{n-\al p}}\right)^{\frac{1}{p-1}}\frac{ds}{s},     
\end{aligned}
 \end{equation}
in terms of the function 
 \begin{equation} \label{Mxt}
 \begin{aligned}
M(x, t) & :=\int_t^{\infty} \left( \frac{\left[\kappa(B(x, s))\right]^{\frac{q(p-1)}{p-1-q}}}{s^{n-\al p}}\right)^{\frac{1}{p-1}}\frac{ds}{s} \\ & + \left(\int_t^{\infty} \left( \frac{\sigma(B(x, s))}{s^{n-\al p}}\right)^{\frac{1}{p-1}}\frac{ds}{s} \right)^{\frac{p-1}{p-1-q}}.     
\end{aligned}
 \end{equation}
 By Lemma~\ref{C0}, $M(x, t)< \infty$ for all $x\in \R^n$ and $t >0$. Let us show  
 that 
\begin{equation} \label{firstfinite}
A_0(x, t) \le c \, M(x,t), \qquad \text{for all} \, \,  x\in \R^n, \, \,  t >0,
\end{equation}
where $c$ depends only on $n, p, q, \al$.

Indeed,  using \eqref{firstfinite-om}, and making the substitution $\rho=2s$ in the first term, and replacing $s$ by $t\le s$ in the lower limit of integration in the second term, we 
obtain  
\begin{equation*}
 \begin{aligned}
& A_0(x, t)  \le c \int_t^{\infty} \left( \frac{\left[\kappa(B(x, 2s))\right]^{\frac{q(p-1)}{p-1-q}}}{s^{n-\al p}}\right)^{\frac{1}{p-1}}\frac{ds}{s} \\
&+ c \int_t^{\infty} \left(\frac{\sigma(B(x, s))}{s^{n-\al p}} \right)^{\frac{1}{p-1}}
 \left( \int_{s}^{\infty} \left(\frac{\sigma(B(x, \tau))}{\tau^{n- \al p}}\right)^{\frac{1}{p-1}} \frac{d\tau}{\tau}\right)^{\frac{q}{p-1-q}}   \frac{ds}{s}\\
 & \le c \int_t^{\infty} \left( \frac{\left[\kappa(B(x, \rho))\right]^{\frac{q(p-1)}{p-1-q}}}{\rho^{n-\al p}}\right)^{\frac{1}{p-1}}\frac{d\rho}{\rho} \\ &+ c \left (\int_t^{\infty} \left(\frac{\sigma(B(x, s))}{s^{n-\al p}} \right)^{\frac{1}{p-1}} \frac{ds}{s}\right)^{\frac{p-1}{p-1-q}} = c \, M(x, t).
\end{aligned}
 \end{equation*}

Now letting $d \omega_j=u^q_j \, d\sigma $, for $j=1, 2, \ldots$, we will prove the estimate 
\begin{equation} \label{firstfinite-j}
\begin{aligned}
\omega_{j}(B(x, t))& \le c \,  \left[\kappa(B(x, t))\right]^{q} \left[\omega_{j-1}(B(x, 2t))\right]^{\frac{q}{p-1}}\\ & +  c \, \left( \int_{t}^{\infty} \left(\frac{\omega_{j-1}(B(x, s))}{s^{n- \al p}}\right)^{\frac{1}{p-1}} \frac{ds}{s}\right)^{q} \, \sigma(B(x, t)),  
\end{aligned}
\end{equation}
where $c$ depends only on $n, p, q, \al$. 

We have
\begin{equation*}
\begin{aligned}
\omega_{j}(B(x, t)) & =\int_{B(x, t)} \left(\wa \omega_{j-1} \right)^qd\sigma\\& = 
\int_{B(x, t)}\left[\int_0^{\infty} \left(\frac{ \omega_{j-1}(B(y, s))}{s^{n-\al p}}\right)^{\frac{1}{p-1}}\frac{ds}{s}\right]^qd\sigma(y) \\
& \le c_q\int_{B(x, t)}\left[\int_0^t\left(\frac{ \omega_{j-1}(B(y, s))}{s^{n-\al p}}\right)^{\frac{1}{p-1}}\frac{ds}{s}\right]^qd\sigma(y) \\& + c_q\int_{B(x, t)}\left[\int_t^{\infty} \left(\frac{ \omega_{j-1}(B(y, s))}{s^{n-\al p}}\right)^{\frac{1}{p-1}}\frac{ds}{s}\right]^qd\sigma(y) \\
& :=c_q(I+II).
 \end{aligned}
 \end{equation*}

To estimate $I$, notice that if $y \in B(x, t)$ and $0<s<t$, then $B(y, s) \subset B=B(x, 2t)$. Hence, by \eqref{kap-local} with $d \nu=\chi_{B} \, d \omega_{j-1}$, we have 
\begin{equation*} 
I \le c \, \int_{B(x, t)} \left( \wa \nu \right)^q d \sigma \le 
c \, \left[\kappa(B(x, t))\right]^q \, \left[\omega_{j-1}(B(x, 2t))\right]^{\frac{q}{p-1}}.
\end{equation*}

We now estimate $II$. Since $B(y, s) \subset B(x, 2s)$ if $ y \in B(x, t)$ and $ s \ge t$, it follows that $\omega_{j-1}(B(y, s)) \le \omega_{j-1}(B(x, 2s))$ in $II$, and 
consequently 
\begin{equation*}
\begin{aligned}
II& \le  c \, \sigma(B(x, t))\left[\int_t^{\infty} \left(\frac{ \omega_{j-1}(B(x, 2s))}{s^{n-\al p}}\right)^{\frac{1}{p-1}}\frac{ds}{s}\right]^q \\
& \le c_1 \, \sigma(B(x, t))\left[\int_t^{\infty} \left(\frac{ \omega_{j-1}(B(x, s))}{s^{n-\al p}}\right)^{\frac{1}{p-1}}\frac{ds}{s}\right]^q.
\end{aligned}
 \end{equation*}
 Combining estimates $I$ and $II$, we obtain \eqref{firstfinite-j} for $j=1, 2, \ldots$.

We next estimate 
 \begin{equation}\label{A-j}
 A_j(x, t):=\int_t^{\infty} \left(\frac{ \omega_j(B(x, s))}{s^{n-\al p}}\right)^{\frac{1}{p-1}}\frac{ds}{s},
  \end{equation} 
 for $j=1, 2, \ldots$. 
Using \eqref{firstfinite-j}, and replacing the lower limit of integration $s$ 
with $t\le s$ in the second term, we estimate 
\begin{equation*}
\begin{aligned}
& A_{j}(x, t) \le c \int_t^{\infty} \left(\frac{\left[\kappa(B(x, s))\right]^{q} \left[\omega_{j-1}(B(x, 2s))\right]^{\frac{q}{p-1}}}{s^{n-\al p}}\right)^{\frac{1}{p-1}}\frac{ds}{s}\\
& + c \int_t^{\infty} \left(\frac{\left( \int_{s}^{\infty} \left(\frac{\omega_{j-1}(B(x, \tau))}{\tau^{n- \al p}}\right)^{\frac{1}{p-1}} \frac{d\tau}{\tau}\right)^{q} \, \sigma(B(x, s))}{s^{n-\al p}}\right)^{\frac{1}{p-1}}\frac{ds}{s}\\ & \le c \int_t^{\infty} \left(\frac{\left[\kappa(B(x, s))\right]^{q} \left[\omega_{j-1}(B(x, 2s))\right]^{\frac{q}{p-1}}}{s^{n-\al p}}\right)^{\frac{1}{p-1}}\frac{ds}{s} \\
& +  c \, \left[A_{j-1}(x, t)\right]^{\frac{q}{p-1}} \int_t^{\infty} \left(\frac{\sigma(B(x, s))}{s^{n-\al p}}\right)^{\frac{1}{p-1}}\frac{ds}{s}.
\end{aligned}
 \end{equation*}
Applying H\"older's inequality with exponents  $ \frac{p-1}{p-1-q}$ and $\frac{p-1}{q}$  in the first integral on the right-hand side, we obtain 
\begin{equation*}
\begin{aligned}
 A_{j}(x, t)  & \le c \, A_{j-1}(x, t)^{\frac{q}{p-1}} \left [\int_t^{\infty} \left(\frac{\left[\kappa(B(x, s))\right]^{\frac{q(p-1)}{p-1-q}}}{s^{n-\al p}}\right)^{\frac{1}{p-1}}\frac{ds}{s}\right]^{\frac{p-1-q}{p-1}} \\ & + 
 c \, \left[A_{j-1}(x, t)\right]^{\frac{q}{p-1}} \int_t^{\infty} \left(\frac{\sigma(B(x, s))}{s^{n-\al p}}\right)^{\frac{1}{p-1}}\frac{ds}{s}\\ & \le  c \, \left[A_{j-1}(x, t)\right]^{\frac{q}{p-1}} \left[M(x,t)\right]^{\frac{p-1-q}{p-1}},
\end{aligned}
 \end{equation*}
 with a different constant $c$ depending only on $n, p, q, \al$. 

Arguing by induction, we see that $A_{j}(x, t)<\infty$ for all $x \in \R^n$ and $t>0$. 
 Moreover, $ A_{j-1}(x, t) \le  A_{j}(x, t)$,  
 since $\omega_{j-1} \le \omega_j$.  Hence, from the preceding estimate we deduce 
 \begin{equation} \label{Aest}
A_j(x, t) \le C \, M(x, t), \quad   j=1, 2, \ldots, \quad {\rm for all} \, \, x \in \R^n, \, \, t>0,  
 \end{equation}
with a constant $C$ depending only on $n, p, q, \al$. An immediate consequence 
of \eqref{Aest} is the estimate 
\begin{equation*}
\omega_j(B(x, t)) \le c \,  t^{n-\al p}  \, \left[M(x, t)\right]^{p-1},  \quad   j=1, 2, \ldots, \, \, x \in \R^n, \, \, t>0,
 \end{equation*}
where $c$ depends only on $n, p, q, \al$. In particular, $u_j \in L^q_{{\rm loc}}(\R^n, \sigma)$ for all $j=0, 1, 2, \ldots$.

Thus, by the Monotone Convergence Theorem,  there exists a nontrivial solution 
to equation \eqref{integral} given by 
$$u = \lim_{j \to \infty} u_j\in L^q_{\text{loc}}(\R^n, d\sigma).$$ 
Moreover, by \eqref{Aest}, we have 
\begin{equation}\label{M-0}
\begin{aligned}
& \int_t^{\infty} \left(\frac{ \int_{B(x, s)} u^qd\sigma}{s^{n-\al p}}\right)^{\frac{1}{p-1}}\frac{ds}{s} \le C \, M(x, t) \le C \, M(x, 0),
\end{aligned}
 \end{equation} 
where the constant $C$ depends only on $n, p, q, \al$, and  $M(x, t) \to M(x, 0)$   
as $t \to 0^{+}$. Notice that 
\begin{equation*}
\begin{aligned}
M(x, 0)=\mathbf{K}_{\al, p} \sigma(x) & + \left[\wa \sigma(x)\right]^{\frac{p-1}{p-1-q}}.
\end{aligned}
\end{equation*}
Letting $t \to 0$ in \eqref{M-0} yields  
$$u(x)=\wa (u^qd\sigma)(x) =\int_0^{\infty} \left(\frac{ \int_{B(x, s)} u^qd\sigma}{s^{n-\al p}}\right)^{\frac{1}{p-1}}\frac{ds}{s}  \le C \, M(x, 0),$$
which proves the upper bound in \eqref{alpha}.  

Notice that by Lemma \ref{wolffestimate}, $\int_{B(x, s)} u^qd\sigma \ge c \, \left[\kappa(B(x, s))\right]^{\frac{q(p-1)}{p-1-q}}$. Combined with Theorem~\ref{thmlowest}, this yields the lower bound, 
$$u(x) \ge c \, M(x, 0), $$ 
for any nontrivial solution $u\in L^q_{{\rm loc}}(\R^n, d \sigma)$ 
of  $u\ge \wa (u^qd\sigma)$.  In particular, $M(x, 0) \in L^q_{{\rm loc}}(\R^n, d \sigma)$. Moreover, by Corollary \ref{inf}, we see that $\liminf_{x \to \infty} u(x)=0$. This completes the proof of Theorem~\ref{main}.
\end{proof}

\subsection{Solutions in $L^{1+q}_{{\rm loc}}(\R^n, d\sigma)$}

In this section we will prove that the solution $u\in L^q_{{\rm loc}} (\R^n, \sigma)$ to \eqref{integral} constructed in the proof of Theorem~\ref{main} 
 actually has the property $u\in L^{1+q}_{{\rm loc}} (\R^n, \sigma)$ 
under the additional assumption 
\begin{equation}\label{DB}
\int_{B} \left(\wa \sigma_B\right)^{\frac{(1+q)(p-1)}{p-1-q}}d\sigma  < \infty, \quad 
 \text{for all balls} \, \, B \, \, \text{in} \, \, \R^n.
\end{equation}  
This condition is also necessary for $u\in L^{1+q}_{{\rm loc}} (\R^n, \sigma)$.

\begin{lemma} \label{w1p} Let $1<p<\infty$, $0<q<p-1$, and $0<\al < \frac{n}{p}$.
Let $\sigma \in \M$. 
Suppose that \eqref{wolff01}, \eqref{suffcond},  
and  \eqref{DB} hold. Then 
$\wa \sigma \in L^{\frac{(1+q)(p-1)}{p-1-q}}_{{\rm loc}}(\R^n, \sigma)$, and 
$\mathbf{K}_{\al, p} \sigma \in L^{1+q}_{{\rm loc}}(\R^n, \sigma)$. 
\end{lemma}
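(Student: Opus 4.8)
Here the two assertions will be handled separately. Note first that, since the solution $u$ of \eqref{integral} produced in Theorem~\ref{main} obeys $u\le C\big(\mathbf K_{\al,p}\sigma+(\wa\sigma)^{\frac{p-1}{p-1-q}}\big)\le C\,c\,u$, the two statements together are in fact equivalent to $u\in L^{1+q}_{\mathrm{loc}}(\R^n,d\sigma)$, which is the ultimate goal of this subsection.

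\emph{First assertion.} Fix a ball $B=B(x_0,R)$ and put $B^{*}=B(x_0,2R)$. Splitting the integral defining $\wa\sigma$ at $r=R$ and using $B(y,r)\subset B^{*}$ for $r<R$, $B(y,r)\subset B(x_0,2r)$ for $r\ge R$ and $y\in B$, one gets for $y\in B$
$$
\wa\sigma(y)\le \wa\sigma_{B^{*}}(y)+c\int_{2R}^{\infty}\Big(\tfrac{\sigma(B(x_0,\rho))}{\rho^{n-\al p}}\Big)^{\frac{1}{p-1}}\tfrac{d\rho}{\rho}=\wa\sigma_{B^{*}}(y)+c\,C_{\infty},
$$
with $C_{\infty}<\infty$ by \eqref{wolff01} (Corollary~\ref{inf}). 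Raising to the power $\beta:=\frac{(1+q)(p-1)}{p-1-q}$ and integrating $d\sigma$ over $B$ yields $\int_B(\wa\sigma)^{\beta}d\sigma\le c\int_{B^{*}}(\wa\sigma_{B^{*}})^{\beta}d\sigma+c\,C_{\infty}^{\beta}\,\sigma(B)$, which is finite by \eqref{DB} applied with $B^{*}$.

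\emph{Second assertion — localization.} The identical splitting applies to $\mathbf K_{\al,p}\sigma$: for $y\in B$ and $s<R$ one has $B(y,s)\subset B^{*}$, so $\kappa(B(y,s))$ depends only on $\sigma_{B^{*}}$ and that part of the integral is $\le\mathbf K_{\al,p}\sigma_{B^{*}}(y)$; for $s\ge R$ one uses $\kappa(B(y,s))\le\kappa(B(x_0,2s))$ and \eqref{suffcond} (via Lemma~\ref{C0}, i.e.\ \eqref{necessary1} with $x=x_0$) to bound the rest by a finite constant. Hence $\mathbf K_{\al,p}\sigma\le\mathbf K_{\al,p}\sigma_{B^{*}}+c$ on $B$, and it suffices to prove $\int(\mathbf K_{\al,p}\tau)^{1+q}d\tau<\infty$ for $\tau:=\sigma_{B^{*}}$, a finite measure with $\kappa_{\tau}(\R^n)=\kappa(B^{*})<\infty$ and, by the first assertion applied to $B^{*}$, with $\int(\wa\tau)^{\beta}d\tau<\infty$. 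Since $\kappa_{\tau}(\R^n)<\infty$, Theorem~\ref{Lqglobal} furnishes a solution $v\in L^{q}(\R^n,d\tau)$ of $v=\wa(v^{q}d\tau)$; feeding $v$ into Lemma~\ref{wolffestimate}(i) gives $\kappa_{\tau}(B(x,s))^{\frac{q(p-1)}{p-1-q}}\le c\int_{B(x,s)}v^{q}d\tau$, whence $\mathbf K_{\al,p}\tau\le c\,\wa(v^{q}d\tau)=c\,v$. So everything reduces to the single estimate $v\in L^{1+q}(\R^n,d\tau)$.

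\emph{Second assertion — the compactly supported estimate, and the main obstacle.} The plan is to run the monotone iteration $v_0=c_0(\wa\tau)^{\frac{p-1}{p-1-q}}$, $v_{j+1}=\wa(v_j^{q}d\tau)\uparrow v$ (as in the proofs of Theorems~\ref{Lqglobal} and~\ref{main}) and to propagate a uniform bound $\|v_j\|_{L^{1+q}(d\tau)}\le R_0$; then $v\in L^{1+q}(d\tau)$ by Monotone Convergence. The base case $\|v_0\|_{L^{1+q}(d\tau)}^{1+q}=c_0^{1+q}\int(\wa\tau)^{\beta}d\tau$ is finite \emph{precisely} because of \eqref{DB}. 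The hard part is the inductive step. The $L^{q}(d\tau)$-norm is preserved with $\int v_j^{q}d\tau\le[\kappa_\tau(\R^n)]^{\frac{q(p-1)}{p-1-q}}$ exactly as in Theorem~\ref{Lqglobal}, but the $L^{1+q}$-bound needs \emph{both} $\kappa_\tau(\R^n)<\infty$ and \eqref{DB} (the former alone does not force $\wa\tau\in L^{1+q}(d\tau)$). The natural device is a truncation $v_j^{q}=\min(v_j^{q},N)+(v_j^{q}-N)_{+}$: the bounded part gives $\wa(\min(v_j^{q},N)\,d\tau)\le N^{\frac{1}{p-1}}\wa\tau$, controlled in $L^{1+q}(d\tau)$ by \eqref{DB}, while $(v_j^{q}-N)_{+}d\tau$ has small total mass (Chebyshev together with the inductive bound $\int v_j^{1+q}d\tau\le R_0^{1+q}$) and hence small $L^{q}(d\tau)$-norm of its potential by the weighted inequality with constant $\kappa_\tau(\R^n)$. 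Turning this $L^q$ control of the tail into $L^{1+q}$ control — equivalently, establishing a strong-type trace inequality $\|\wa(f\,d\tau)\|_{L^{1+q}(d\tau)}\lesssim\|f\|_{L^{(1+q)/q}(d\tau)}^{1/(p-1)}$ valid for measures satisfying both conditions, which one expects to get from Wolff's inequality \cite{HW} applied to $v^{q}d\tau$ (or from the trace-inequality machinery of \cite{Maz11}) — is the technically delicate point I anticipate as the real obstruction; once it is in hand, $q/(p-1)<1$ makes the recursion for $\|v_j\|_{L^{1+q}(d\tau)}$ close.
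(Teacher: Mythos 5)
Your localization steps reproduce the paper's argument: the splitting of $\wa \sigma$ at radius comparable to the ball, with the far part controlled by \eqref{wolff01} via Corollary~\ref{inf} and the near part by \eqref{DB} applied to the doubled ball, is exactly how the first assertion is proved there; likewise your reduction of the second assertion to the restricted measure $\tau=\sigma_{B^*}$, using $\kappa(B(y,s))\le\kappa(B(x_0,2s))$ for large $s$ together with Lemma~\ref{C0}, and the bound $\mathbf{K}_{\al,p}\tau\le c\,\wa(v^q d\tau)=c\,v$ via Lemma~\ref{wolffestimate}(i), is sound and parallels the paper's treatment of the near part. The problem is the step you yourself flag as unresolved: showing that the relevant solution of \eqref{integral} for the finite measure $\tau$ lies in $L^{1+q}(d\tau)$. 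That is not a technical footnote but the entire content of the second assertion, and your proposed route does not close it. The truncation $v_j^q=\min(v_j^q,N)+(v_j^q-N)_+$ only yields, via Chebyshev and the constant $\kappa_\tau(\R^n)$, smallness of the $L^q(d\tau)$-norm of the potential of the tail; upgrading this to $L^{1+q}(d\tau)$ control would require the strong-type inequality $\|\wa(f\,d\tau)\|_{L^{1+q}(d\tau)}\lesssim \|f\|_{L^{(1+q)/q}(d\tau)}^{1/(p-1)}$, which you assert "one expects" from Wolff's inequality or trace-inequality machinery but do not prove, and which does not follow formally from $\kappa_\tau(\R^n)<\infty$ together with \eqref{DB}. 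Without it the recursion for $\|v_j\|_{L^{1+q}(d\tau)}$ never starts to close, so the proof is incomplete precisely where the difficulty lies.

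The paper avoids this issue by quoting an external result: since \eqref{DB} holds for $\sigma_B$, Theorem 3.5 of \cite{CV1} provides a global solution $u_B\in L^{1+q}(\R^n,d\sigma_B)$ of $u_B=\wa(u_B^q\,d\sigma_B)$, and then the pointwise lower bound \eqref{alpha} (applied with $\sigma_B$ in place of $\sigma$) gives $\mathbf{K}_{\al,p}\sigma_B\le C\,u_B$, so the near part integrates to $\int_B u_B^{1+q}\,d\sigma<\infty$. In other words, the $L^{1+q}$-solvability under the energy-type condition \eqref{DB} is a substantive theorem established in \cite{CV1} (by a genuinely different, energy-based argument), not something recoverable by the monotone iteration plus a formal trace inequality. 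If you invoke that result (your minimal solution $v$ satisfies $v\le u_B$ by minimality, or one simply works with $u_B$ directly as the paper does), your argument becomes complete and essentially coincides with the paper's proof; as written, it has a genuine gap.
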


\begin{proof} Let $x \in \R^n$ and $t>0$. We need to show  
\begin{equation*}
\begin{aligned}
I_1 &:= \int_{B(x, t)} \left(\wa \sigma\right)^{\frac{(1+q)(p-1)}{p-1-q}} \, d\sigma< \infty,\\
I_2&:=\int_{B(x,t)}\left( \mathbf{K}_{\al, p} \sigma\right)^{1+q}d\sigma< \infty.
\end{aligned}
\end{equation*}
To estimate $I_1 $, we split  $\wa \sigma$ into two integrals,  and estimate them separately,  
$$
I:=
 \int_{B(x, t)} \left[\int_0^t\left(\frac{\sigma(B(y, r))}{r^{n-\al p}}\right)^{\frac{1}{p-1}} \frac{dr}{r}\right]^{\frac{(1+q)(p-1)}{p-1-q}} d\sigma(y),$$
$$II:= 
 \int_{B(x, t)} \left[\int_t^\infty\left(\frac{\sigma(B(y, r))}{r^{n-\al p}}\right)^{\frac{1}{p-1}} \frac{dr}{r}\right]^{\frac{(1+q)(p-1)}{p-1-q}} d\sigma(y). 
$$

We first estimate $II$. 
If $r \ge t$ and $y \in B(x, t)$, then $B(y, r) \subset B(x, 2r)$, and  hence, making the substitution $s=2r$, we get 
\begin{equation*} 
II  \le c \sigma(B(x, t)) \,\left[\int_t^\infty\left(\frac{\sigma(B(x, s))}{s^{n-\al p}}\right)^{\frac{1}{p-1}} \frac{ds}{s}\right]^{\frac{(1+q)(p-1)}{p-1-q}} < \infty,
 \end{equation*}  
by Corollary~\ref{inf}, where $c=c(p, q, \al, n)$. 

To estimate $I$, notice that, if $0<r<t$ and $y \in  B(x, t)$, then $B(y, r) \subset B=B(x, 2t)$. Hence,
$$
\int_0^t\left(\frac{\sigma(B(y, r))}{r^{n-\al p}}\right)^{\frac{1}{p-1}} \frac{dr}{r}\le 
\wa \sigma_B (y), 
$$
which by \eqref{DB} yields  
\begin{equation}\label{DB-G} 
I \le \int_{B}\left(\wa\sigma_B\right)^{\frac{(1+q)(p-1)}{p-1-q}}d\sigma < \infty. 
 \end{equation} 
Thus, $I_1 < \infty$. 

We estimate $I_2$ in a similar way, splitting  $\mathbf{K}_{\al, p} \sigma$ into two integrals,  
$$
III:= \int_{B(x,t)}\left[\int_0^t \left(\frac{\left[\kappa(B(y, r))\right]^{\frac{q(p-1)}{p-1-q}}}{r^{n-\al p}}\right)^{\frac{1}{p-1}}\frac{dr}{r}\right]^{1+q}d\sigma(y),$$
$$IV:= 
 \int_{B(x,t)}\left[\int_t^\infty \left(\frac{\left[\kappa(B(y, r))\right]^{\frac{q(p-1)}{p-1-q}}}{r^{n-\al p}}\right)^{\frac{1}{p-1}}\frac{dr}{r}\right]^{1+q}d\sigma(y). 
$$

 To show that $IV<\infty$, notice that 
 $\kappa(B(y, r)) \subset \kappa(B(x, 2r))$ if $t \le r$ and $y \in B(x, t)$, which yields 
 $$
 IV \le c \sigma(B(x, t)) \left[\int_t^\infty \left(\frac{\left[\kappa(B(x, s))\right]^{\frac{q(p-1)}{p-1-q}}}{s^{n-\al p}}\right)^{\frac{1}{p-1}}\frac{ds}{s}\right]^{1+q}< \infty, 
 $$
 by Lemma~\ref{C0}, using as above the substitution $s=2r$.

 Finally, we estimate $III$. If $r<t$ and $y \in B(x, t)$, we have 
 $B(y, r) \subset B=B(x, 2t).$  
 Then $\kappa(B(y, r)) = \kappa(B(y, r)\cap B),$  and consequently, 
$$
III=\int_{B(x,t)}\left[\int_0^t \left(\frac{\left[\kappa(B(y, r)\cap B)\right]^{\frac{q(p-1)}{p-1-q}}}{r^{n-\al p}}\right)^{\frac{1}{p-1}}\frac{dr}{r}\right]^{1+q}d\sigma(y).$$

Since \eqref{DB} holds, applying Theorem 3.5  in \cite{CV1},  with $\sigma_B$ in place of $\sigma$, 
we conclude that there exists a global solution $u_B \in L^{1+q}(\R^n, d\sigma_B)$ to the equation 
$u_B=\wa(u_B^q d\sigma_B)$. By \eqref{alpha} with $\sigma_B$ in place of $\sigma$ we have  
$$
 \int_0^\infty \left(\frac{\left[\kappa(B(y, r)\cap B)\right]^{\frac{q(p-1)}{p-1-q}}}{r^{n-\al p}}\right)^{\frac{1}{p-1}}\frac{dr}{r} \le C \, u_B(y), \quad y \in \R^n.
$$

Hence,  
\begin{equation*} 
III \le  c \int_B \left(u_B\right)^{1+q} d \sigma  < \infty. 
 \end{equation*}
Thus,  both $II$ and $IV$ are finite, i.e., $I_2<\infty$. 
\end{proof}

\begin{theorem} \label{W1pmain} Suppose that \eqref{wolff01},  \eqref{suffcond}, and \eqref{DB} hold. Then there exists a nontrivial solution $u \in L^{1+q}_{{\rm loc}}(\R^n, d\sigma)$ to \eqref{integral}. Moreover, $u$ satisfies \eqref{alpha}. 

Conditions  \eqref{wolff01}, \eqref{suffcond}, and \eqref{DB} 
are necessary in order that a nontrivial solution $u \in L^{1+q}_{{\rm loc}}(\R^n, d\sigma)$ to \eqref{integral} exist. 
\end{theorem}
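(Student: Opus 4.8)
The plan is to derive Theorem~\ref{W1pmain} by assembling results already established, treating sufficiency and necessity in turn; essentially no new estimate is needed beyond Lemma~\ref{w1p}.

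\emph{Sufficiency.} I would start from Theorem~\ref{main}: since \eqref{wolff01} and \eqref{suffcond} hold, it produces a nontrivial solution $u\in L^q_{\rm loc}(\R^n,d\sigma)$ of \eqref{integral} obeying the two-sided bound \eqref{alpha}, so in particular $u\le C\big[\mathbf{K}_{\al,p}\sigma+(\wa\sigma)^{\frac{p-1}{p-1-q}}\big]$. Raising this to the power $1+q$ and using $(a+b)^{1+q}\le 2^q(a^{1+q}+b^{1+q})$ gives
\[
u^{1+q}\le 2^qC^{1+q}\Big[(\mathbf{K}_{\al,p}\sigma)^{1+q}+(\wa\sigma)^{\frac{(1+q)(p-1)}{p-1-q}}\Big].
\]
Now I would invoke Lemma~\ref{w1p}: under \eqref{wolff01}, \eqref{suffcond}, and \eqref{DB}, both $(\mathbf{K}_{\al,p}\sigma)^{1+q}$ and $(\wa\sigma)^{\frac{(1+q)(p-1)}{p-1-q}}$ lie in $L^1_{\rm loc}(\R^n,d\sigma)$, so $u\in L^{1+q}_{\rm loc}(\R^n,d\sigma)$; and it still satisfies \eqref{alpha} since the $u$ furnished by Theorem~\ref{main} does.

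\emph{Necessity.} Suppose $u\in L^{1+q}_{\rm loc}(\R^n,d\sigma)$ is a nontrivial solution of \eqref{integral}. By H\"older on each ball $B$, $\int_B u^q\,d\sigma\le\sigma(B)^{\frac1{1+q}}\big(\int_B u^{1+q}\,d\sigma\big)^{\frac{q}{1+q}}<\infty$, so $u\in L^q_{\rm loc}(\R^n,d\sigma)$ and, in particular, $u$ is a nontrivial solution of the inequality \eqref{integraleq}. Theorem~\ref{thmlowest} then gives $u\ge C(\wa\sigma)^{\frac{p-1}{p-1-q}}$ $d\sigma$-a.e.; since $u<\infty$ $d\sigma$-a.e.\ and $\sigma\ne 0$, $\wa\sigma$ is finite at some point, hence $\wa\sigma\not\equiv+\infty$, and Corollary~\ref{inf}(i) yields \eqref{wolff01}. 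Corollary~\ref{Cor-kap}, applied to the nontrivial solution $u$ of \eqref{integraleq}, gives \eqref{necessary1} for every $x$ and $t$; specializing to $x=0$, $t=1$ gives \eqref{suffcond}. For \eqref{DB} I would localize: fix a ball $B$ and assume $\sigma_B:=\sigma|_B\ne 0$ (otherwise \eqref{DB} is trivial); by monotonicity of $\wa$ in the measure, $u\ge\wa(u^q\,d\sigma)\ge\wa(u^q\,d\sigma_B)$ $d\sigma_B$-a.e., so $u$ is a nontrivial solution of \eqref{integraleq} with $\sigma_B$ in place of $\sigma$, and Theorem~\ref{thmlowest} (applied with $\sigma_B$) gives $u\ge C(\wa\sigma_B)^{\frac{p-1}{p-1-q}}$ $d\sigma_B$-a.e. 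Raising to the power $1+q$ and integrating over $B$,
\[
\int_B\big(\wa\sigma_B\big)^{\frac{(1+q)(p-1)}{p-1-q}}\,d\sigma\le C^{-(1+q)}\int_B u^{1+q}\,d\sigma<\infty,
\]
which is \eqref{DB}.

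The one genuinely substantive ingredient is Lemma~\ref{w1p} itself (whose proof rests on the global $L^{1+q}$-solvability on balls borrowed from \cite{CV1}), which I am assuming; within the theorem the only nonroutine move is the localization argument for the necessity of \eqref{DB}, while the necessity of \eqref{wolff01} and \eqref{suffcond} and the entire sufficiency direction are bookkeeping on top of Theorems~\ref{main} and \ref{thmlowest} and Corollaries~\ref{inf} and \ref{Cor-kap}.
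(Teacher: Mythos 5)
Your argument is correct and follows essentially the same route as the paper: sufficiency by combining the solution and two-sided bound \eqref{alpha} from Theorem~\ref{main} with the local integrability of $\wa\sigma$ and $\mathbf{K}_{\al,p}\sigma$ from Lemma~\ref{w1p}, and necessity of \eqref{wolff01}, \eqref{suffcond}, \eqref{DB} from the lower bound \eqref{lowerest} (Theorem~\ref{thmlowest}) together with Corollaries~\ref{inf} and \ref{Cor-kap}. Your extra steps (H\"older to pass from $L^{1+q}_{\rm loc}$ to $L^{q}_{\rm loc}$, and localizing to $\sigma_B$ for \eqref{DB}, where one could simply use $\wa\sigma_B\le\wa\sigma$) just make explicit what the paper leaves terse.
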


\begin{proof} By Theorem~\ref{main}, there exists a nontrivial solution $u \in L^{q}_{{\rm loc}}(\R^n, d\sigma)$ to the equation $u=\wa(u^qd\sigma)$ such that \eqref{alpha} holds. 
The upper estimate in \eqref{alpha} actually yields  $u \in L^{1+q}_{{\rm loc}}(\R^n, d\sigma)$ by Lemma~\ref{w1p}. 

Conditions \eqref{wolff01} and  \eqref{suffcond} are necessary for 
the existence of any nontrivial solution to \eqref{integral} by Theorem~\ref{main}. 
Condition \eqref{DB} is necessary as well which follows from \eqref{lowerest}. 
\end{proof}

\section{Proofs of Theorem~\ref{mainthm}, Theorem~\ref{W1pthm}, and Theorem~\ref{fracmain}} \label{thm}

We will need the following version of the well-known comparison principle. 
\begin{lemma} \label{cpr2} Let $\Omega$ be a bounded domain in $\R^n$. Suppose that $\mu, \nu \in L^{-1,p'}(\Omega)$, and $0 \le \mu \le \nu$. Suppose $u \in L_0^{1, p}(\Omega)$ and  $ v \in W^{1, p}(\Omega)$ are distributional solutions to the equations 
$-\Delta_p u=\mu$ and $-\Delta_p v =\nu$ in $\Omega$, respectively. Then $u \le v $ a.e. in $\Omega$.
\end{lemma}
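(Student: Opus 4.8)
The plan is to run the classical monotonicity (Browder--Minty) argument for the $p$-Laplacian, testing the difference of the two equations against $(u-v)_+$.

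First I would upgrade both distributional identities so that they hold against all test functions in $L^{1,p}_0(\Omega)$: since $\mu,\nu\in L^{-1,p'}(\Omega)=\big(L^{1,p}_0(\Omega)\big)^{*}$, while $|\nabla u|^{p-2}\nabla u\in L^{p'}(\Omega)$ and $|\nabla v|^{p-2}\nabla v\in L^{p'}(\Omega)$ (the latter because $v\in W^{1,p}(\Omega)$ and $\Omega$ is bounded), both sides of $-\Delta_p u=\mu$ and $-\Delta_p v=\nu$ are continuous on $L^{1,p}_0(\Omega)$, so the density of $C_0^\infty(\Omega)$ gives
\begin{equation*}
\int_\Omega |\nabla u|^{p-2}\nabla u\cdot\nabla\varphi\,dx=\langle\mu,\varphi\rangle,\qquad \int_\Omega |\nabla v|^{p-2}\nabla v\cdot\nabla\varphi\,dx=\langle\nu,\varphi\rangle
\end{equation*}
for every $\varphi\in L^{1,p}_0(\Omega)$. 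Subtracting, for all such $\varphi$,
\begin{equation*}
\int_\Omega\big(|\nabla u|^{p-2}\nabla u-|\nabla v|^{p-2}\nabla v\big)\cdot\nabla\varphi\,dx=\langle\mu-\nu,\varphi\rangle .
\end{equation*}

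The heart of the matter is to take $\varphi=(u-v)_+\in W^{1,p}(\Omega)$ and to verify that it actually lies in $L^{1,p}_0(\Omega)$; I expect this to be the only real obstacle, the rest being soft. Here one exploits the asymmetry of the hypotheses: $u$ has zero boundary trace (being in $L^{1,p}_0(\Omega)$) while $v\ge 0$ on $\partial\Omega$, so $0\le(u-v)_+\le u_+$ a.e.\ on $\Omega$ with $u_+\in L^{1,p}_0(\Omega)$, and the lattice property of $L^{1,p}_0(\Omega)=W^{1,p}_0(\Omega)$ then yields $(u-v)_+\in L^{1,p}_0(\Omega)$. With this choice the right-hand side satisfies $\langle\mu-\nu,(u-v)_+\rangle\le 0$ since $\nu-\mu\ge 0$ and $(u-v)_+\ge 0$, while by the chain rule $\nabla(u-v)_+=(\nabla u-\nabla v)\,\chi_{\{u>v\}}$, so the identity reduces to
\begin{equation*}
\int_{\{u>v\}}\big(|\nabla u|^{p-2}\nabla u-|\nabla v|^{p-2}\nabla v\big)\cdot(\nabla u-\nabla v)\,dx\le 0 .
\end{equation*}

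To finish, I would invoke the strict monotonicity of the vector field $\xi\mapsto|\xi|^{p-2}\xi$ on $\R^n$ (valid for every $1<p<\infty$): the integrand above is pointwise nonnegative and vanishes precisely where $\nabla u=\nabla v$. Hence it vanishes a.e.\ on $\{u>v\}$, i.e.\ $\nabla(u-v)_+=0$ a.e.\ in $\Omega$; since $(u-v)_+\in L^{1,p}_0(\Omega)$ and $\Omega$ is bounded, Poincar\'e's inequality forces $(u-v)_+\equiv 0$, that is, $u\le v$ a.e.\ in $\Omega$. (For the monotonicity step one may use $(|\xi|^{p-2}\xi-|\eta|^{p-2}\eta)\cdot(\xi-\eta)\ge c_p|\xi-\eta|^p$ when $p\ge 2$, and $(|\xi|^{p-2}\xi-|\eta|^{p-2}\eta)\cdot(\xi-\eta)\ge c_p(|\xi|+|\eta|)^{p-2}|\xi-\eta|^2$ when $1<p<2$, both strictly positive for $\xi\ne\eta$.)
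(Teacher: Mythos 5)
Your proof follows the same route as the paper's: the paper's test function $u-\min\{u,v\}$ is precisely your $(u-v)_+$, and the remaining steps (extending the two identities to test functions in $L^{1,p}_0(\Omega)$ by density, the sign of $\langle\mu-\nu,(u-v)_+\rangle$, the chain rule, strict monotonicity of $\xi\mapsto|\xi|^{p-2}\xi$, and Poincar\'e) are the standard scheme of Lemma 3.22 in \cite{HKM}, to which the paper appeals; all of those steps are sound.

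The one point that needs repair is your justification that $(u-v)_+\in L^{1,p}_0(\Omega)$, which you yourself identify as the crux. You invoke ``$v\ge 0$ on $\partial\Omega$'', but no such hypothesis appears in the statement; moreover, nonnegativity of boundary values alone does not give the pointwise bound $0\le(u-v)_+\le u_+$ a.e.\ in $\Omega$ --- that bound requires $v\ge 0$ a.e.\ in $\Omega$ (one could try to deduce interior nonnegativity from $\nu\ge 0$ plus nonnegative boundary data via the minimum principle, but that is an extra argument, and on an arbitrary bounded domain no trace theory is available anyway). In fact, some positivity of $v$ is genuinely needed: with $\mu=\nu=0$, $u=0$ and $v\equiv-1$ all stated hypotheses hold while the conclusion fails, so the lemma is meant to be read with $v\ge 0$, as in its only application (in the proof of Lemma~\ref{comparison} it is applied with $v$ replaced by $v_k=\min(v,k)\ge 0$). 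Once $v\ge 0$ a.e.\ is granted, your argument is exactly right: $0\le(u-v)_+\le u_+$ with $u_+\in L^{1,p}_0(\Omega)$, together with the standard fact that a $W^{1,p}(\Omega)$ function squeezed between $0$ and an element of $W^{1,p}_0(\Omega)$ lies in $W^{1,p}_0(\Omega)$, makes $(u-v)_+$ an admissible test function, and the rest of your proof goes through.
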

\begin{proof} The proof is standard and relies on the use of the test function 
$\phi=u - \min\{u,v\} \in L_0^{1,p}(\Omega)$; see the proof of Lemma 3.22 in \cite{HKM}. 
\end{proof}

The next version of the comparison principle is more delicate, and we provide a detailed proof. 
\begin{lemma}\label{comparison} Suppose that $\Omega$ is a bounded open set in $\R^n$. Suppose that $\mu, \nu$ are nonnegative finite Borel measures on $\Omega$ 
such that $\mu \le \nu$, where $\mu$ is absolutely continuous with respect to the $p$-capacity ${\rm cap}_p(\cdot)$. If $u$ and $v$ are nonnegative $p$-superharmonic functions in $\Omega$ with Riesz measures $\mu$ and $\nu$, respectively, and 
$\min\{u, k\} \in L_0^{1, p}(\Omega)$ for all $k>0$, 
 then $u \le v$ a.e.
\end{lemma}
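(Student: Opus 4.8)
The plan is to reduce the claim to the standard comparison principle of Lemma~\ref{cpr2} by a truncation and approximation argument that exploits the absolute continuity of $\mu$ with respect to the $p$-capacity. First I would fix $k>0$ and set $u_k=\min\{u,k\}$, which by hypothesis lies in $L^{1,p}_0(\Omega)$; since $u$ is $p$-superharmonic, $u_k$ is a nonnegative supersolution, i.e.\ $-\Delta_p u_k \ge 0$ in the distributional sense. The idea is to compare $u_k$ with $\min\{v,k\}$ or, more flexibly, with a solution of an auxiliary equation whose data dominates the Riesz measure of $u_k$. The key point is that $\mu[u_k]$, which is supported where $u<k$ together with a singular part on $\{u=k\}$, can be controlled: on $\{u<k\}$ it coincides with $\mu$, while the extra mass concentrated on $\{u=k\}$ is nonnegative, so in particular $\mu[u_k]\ge 0$ and one has the decomposition $\mu[u_k]=\mu\lfloor\{u<k\}+\lambda_k$ with $\lambda_k\ge 0$.

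Next I would handle the capacitary absolute continuity of $\mu$. Since $\mu\le\nu$ and both are finite, and $\mu$ vanishes on sets of zero $p$-capacity, I can write $\mu=f\,\mathrm{cap}_p$-a.c.\ part and, more usefully, approximate $\mu$ from below by measures $\mu_j\in L^{-1,p'}(\Omega)$ with $\mu_j\uparrow\mu$; the standard construction (truncating the capacitary density, or using that $\mu$ charges no $p$-polar set hence is a countable sum of measures in $W^{-1,p'}$, cf.\ \cite{BV,DMMOP}) gives $\mu_j\le\mu\le\nu$. Likewise I would replace $\nu$ by a minorant $\nu_j\in L^{-1,p'}(\Omega)$ with $\mu_j\le\nu_j\le\nu$ --- e.g.\ $\nu_j=\mu_j$ already works if one only wants $\mu_j\le\nu_j$. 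Let $w_j\in L^{1,p}_0(\Omega)$ solve $-\Delta_p w_j=\mu_j$; by Lemma~\ref{cpr2} applied in $\Omega$, $w_j\le v$ a.e.\ (here $v\in W^{1,p}(\Omega)$ after truncation, or one first runs the argument against $\min\{v,k\}\in W^{1,p}(\Omega)$ and lets $k\to\infty$). On the other hand, $w_j$ is the increasing limit (in the $p$-superharmonic sense, via Theorem~\ref{weakconv}) of solutions with data $\mu_j$, and $w_j\uparrow \tilde u$ where $\tilde u$ is the $p$-superharmonic solution with Riesz measure $\mu$; by uniqueness of such solutions with $\liminf$ or boundary behaviour matching $u$, one identifies $\tilde u=u$ a.e.

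Finally I would pass to the limit: $u=\lim_j w_j\le v$ a.e.\ in $\Omega$. The step I expect to be the genuine obstacle is the identification $\lim_j w_j = u$: this requires knowing that the $p$-superharmonic solution of $-\Delta_p U=\mu$ in $\Omega$ with the appropriate boundary/growth normalization is \emph{unique}, and that it is the monotone limit of the $w_j$. For the monotone limit one invokes the weak continuity of the $p$-Laplacian under monotone convergence (Theorem~\ref{weakconv}) together with a comparison to pin down the boundary behaviour of $\lim_j w_j$; uniqueness for measure data $\mu$ that is absolutely continuous with respect to $p$-capacity is exactly where that hypothesis on $\mu$ is used, and one cites the theory of renormalized solutions (\cite{KKT,DMMOP,BV}) which guarantees uniqueness in that class. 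Care is also needed at the boundary because $u$ is only assumed to have $\min\{u,k\}\in L^{1,p}_0(\Omega)$, not $u$ itself; handling this is what forces the truncation in $k$ and the simultaneous truncation of $v$, with a final passage $k\to\infty$ using monotone convergence.
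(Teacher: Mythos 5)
Your overall strategy (truncate, reduce to Lemma~\ref{cpr2}, pass to the limit via Theorem~\ref{weakconv}, and identify the limit with $u$ by the uniqueness theorem for measure data absolutely continuous with respect to ${\rm cap}_p$, which is where the hypotheses $\mu \ll {\rm cap}_p$ and $\min\{u,k\}\in L^{1,p}_0(\Omega)$ enter) is the same as the paper's, and your final identification step is essentially the paper's last step. But there is a genuine gap at the central comparison. You approximate $\mu$ from below by $\mu_j\in L^{-1,p'}(\Omega)$, solve $-\Delta_p w_j=\mu_j$ with $w_j\in L^{1,p}_0(\Omega)$, and claim ``by Lemma~\ref{cpr2}, $w_j\le v$.'' Lemma~\ref{cpr2} requires the comparison function to be a $W^{1,p}(\Omega)$ distributional solution whose datum \emph{dominates} $\mu_j$. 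Taking $\nu_j=\mu_j$ is vacuous: it compares $w_j$ with itself and never brings $v$ in; to conclude $w_j\le v$ from that you would need to know that $v$ dominates the finite-energy solution with datum $\nu_j\le\nu$, which is precisely the comparison principle being proved (general $p$-superharmonic $v$ versus finite-energy solution), so the argument is circular. The alternative you mention, comparing against $v_k=\min\{v,k\}\in W^{1,p}(\Omega)$, also fails as stated: the Riesz measure $\nu_k=-\Delta_p v_k$ coincides with $\nu$ only on $\{v<k\}$ and carries no mass on $\{v>k\}$, so $\mu_j\le\mu\le\nu$ does \emph{not} give $\mu_j\le\nu_k$, and Lemma~\ref{cpr2} cannot be invoked.

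The paper resolves exactly this point by truncating the \emph{measure} along the level sets of $v$ rather than approximating it abstractly: it sets $\mu_k=\chi_{\{v<k\}}\,\mu$, so that $\mu_k\le\nu|_{\{v<k\}}=\nu_k|_{\{v<k\}}\le\nu_k$, and then Lemma~\ref{cpr2} applies to the pair $(u_k,v_k)$, where $-\Delta_p u_k=\mu_k$, $u_k\in L^{1,p}_0(\Omega)$, giving $u_k\le v_k\le v$. The absolute continuity of $\mu$ with respect to ${\rm cap}_p$ is then used not for an approximation in $W^{-1,p'}$ but to show $\mu(\{v=\infty\})=0$ (since $\{v=\infty\}$ has zero $p$-capacity for $p$-superharmonic $v$), whence $\mu_k\to\mu$ weakly; weak continuity and uniqueness finish the proof as in your last paragraph. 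To repair your argument you would have to insert the same device: for fixed $j$, compare the solution with datum $\chi_{\{v<k\}}\mu_j$ against $v_k$, let $k\to\infty$, and re-identify the limit with $w_j$ — at which point your $W^{-1,p'}$ approximation becomes an unnecessary detour. Your side remark about decomposing the Riesz measure of $\min\{u,k\}$ is not used anywhere and is not needed.
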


\begin{proof} Notice that $v_k = \min \, (v, k) \in W^{1, p} (\Omega)$ 
is $p$-superharmonic,  and  the corresponding Riesz measures $\nu_k = -\Delta_p v_k$ converge weakly to $\nu$ as $k \to \infty$ (\cite{HKM}, Sec. 7; \cite{KM1}). 
Let $\mu_k = \chi_{\{v<k\}} \mu$, for $k>0$. Then clearly, $\nu_k|_{\{v<k\}}=\nu|_{\{v<k\}}$, 
and consequently $\mu_k \le \nu_k$. 

For any $ \phi \in C_0^{\infty}(\Omega)$, we have 
$$\left |\int_{\Omega} \phi \, d  \mu_k - \int_{\Omega} \phi \,  d  \mu \right |= 
\left | \int_{v \ge k } \phi \, d \, \mu \right | \le \int_{v \ge k } |\phi| \, d \, \mu  \le \max_{\Omega} \, |\phi| \, \mu\left(\{v \ge k \}\right).$$

We have $\mu\left(\{v \ge k \}\right) \to \mu\left(\{v = \infty \}\right)$ as $k \to \infty$.
Since $\mu$ is absolutely continuous with respect to $\text{cap}_{p}(\cdot)$, 
and $v$ is $p$-superharmonic, it follows that  $\mu\left(\{v = \infty \}\right)=0$, which yields $\mu_k \to \mu$ weakly.

Let us denote by $u_k$ the unique solution to the equation 
$$-\Delta_p u_k = \mu_k, \quad u_k \in L_0^{1, p}(\Omega),$$ where $\mu_k \in L^{-1, p'}(\Omega)$ since $\mu_k \le \mu \in L^{-1, p'}(\Omega)$. 
 By Lemma~\ref{cpr2}, we have $u_k \le v_k$ for every $k>0$, and $u_k \le u_j$ if $k \le j$. Passing to the limit as 
$k \to \infty$, we obtain $\tilde u \le v$, where $\tilde u = \lim_{k \to \infty} \, u_k$. 
Since 
$\mu_k \to \mu$ weakly, it follows that $\tilde u$ is a $p$-superharmonic 
solution to the equation $-\Delta_p \tilde u=\mu$ where $\min \, (\tilde u, j) \in 
L_0^{1, p}(\Omega)$ for every $j>0$. Since $\mu$ is absolutely continuous with respect to the $p$-capacity 
${\rm cap}_p(\cdot)$, and $\min\, (u, k) \in L_0^{1, p}(\Omega)$ for every $k>0$, it follows 
by the uniqueness theorem (see 
\cite{K2}, and the references given there) that $\tilde u = u$ a.e., and consequently   
$u \le v$ a.e.  
\end{proof}

\begin{proof}[Proof of Theorem~\ref{mainthm}] Let $1<p<n$. Suppose both \eqref{Wolfinite} 
and \eqref{suffcond1} hold. Then by Theorem~\ref{main} there exists a nontrivial solution $v \in L^q_{{\rm loc}}(d\sigma)$ of the equation
\begin{equation}\label{V}
v = K \, \w(v^qd\sigma) \quad \text{in} \, \, \R^n, 
\end{equation}
where $K$ is the constant in Theorem~\ref{thmpotest}. By  Theorem~\ref{thmlowest} (with $K^{p-1} \sigma$ in place of $\sigma$),  
$$
v \ge C  \, K^{\frac{p-1}{p-1-q}} (\w \sigma)^{\frac{p-1}{p-1-q}}, 
$$
where $C$ is the constant in \eqref{lowerest}. We set 
$$w_0= c_0 \left(\w\sigma\right)^{\frac{p-1}{p-1-q}}, \quad d\omega_0=w_0^q \, d\sigma, $$ 
where $c_0>0$ is a small constant to be determined later. 
In particular, we pick $c_0\le C \, K^{\frac{p-1}{p-1-q}}$ so that 
 $$w_0 \le \frac {c_0}{C \, K^{\frac{p-1}{p-1-q}}} \, v \le v.$$  

Clearly $\omega_0$ is a locally finite Borel measure since $d\omega_0 \le v^qd\sigma$ and $v \in L^q_{\text{loc}}(d\sigma)$. By Lemma~\ref{abscap} with $\alpha=1$, 
$\omega_0$ is absolutely continuous with respect to ${\rm cap}_p(\cdot)$. Hence there exists a unique renormalized solution 
(see \cite{K2}) to the equation
\begin{equation}\label{eq-u1}
 -\Delta_p u_1^k ={\omega_0} \, \chi_{B(0, 2^k)} \, \,  \text {in $B(0, 2^k)$}, \quad u_1^k =0 \, \text{ on } \, \partial B(0, 2^k), 
 \end{equation}
 where $k=0, 1, 2, \ldots$.  
Notice that the sequence $\{u_1^k\}$ is increasing by the comparison principle (Lemma~\ref{comparison}).
Moreover, by Theorem~\ref{thmpotest}, 
$$0 \le u_1^k \le K \, \w (\omega_0\chi_{B(0,2^k)}) \le K \, \w \omega_0 \le K \, \w (v^q d\sigma) =  v.$$

Letting $u_1=\lim_{k \to \infty} u_1^k $ and using the weak continuity of the $p$-Laplace operator (Theorem~\ref{weakconv}) and the Monotone Convergence Theorem, we see that $u_1$ is a 
$p$-superharmonic solution to the equation $-\Delta_p u_1 = \omega_0$ in $\R^n$. 
Since $u_1^k \le v$, it follows that $u_1 \le v$, and hence $\liminf_{|x| \to \infty} u_1(x)=0$. By Theorem~\ref{thmpotest}, 
$$0 \le u_1 \le K \, \w \omega_0 \le K \, \w (v^q d\sigma) =  v.$$
We deduce, using \eqref{wolffest1}, 
\begin{equation*}
\begin{aligned}
u_1 & \ge \frac{1}{K}   \w \omega_0=\frac{c_0^{\frac{q}{p-1}}}{K} 
\,  \w \Big[ \left(\w \sigma\right)^{\frac{q(p-1)}{p-1-q}} d \sigma\Big] \\
\\ & \ge \frac{c_0^{\frac{q}{p-1}} \mathfrak{c}^{\frac{q}{p-1-q}}}{K} 
\,  \left(\w \sigma\right)^{\frac{p-1}{p-1-q}} =   \frac{c_0^{\frac{q}{p-1}-1} \mathfrak{c^{\frac{q}{p-1-q}}}}{K} \, w_0.
\end{aligned}
\end{equation*}
Hence, for $c_0\le \min \left[ \left (\mathfrak{c}^{\frac{q}{p-1-q}} \, K^{-1}\right)^{\frac{p-1}{p-1-q}}, C \, K^{\frac{p-1}{p-1-q}}\right]$, we have $v \ge u_1\ge w_0$.

Let us now construct a sequence $u_j$ ($j=1, 2, \ldots$) of functions which are $p$-superharmonic in $\R^n$, $u_j \in L^q_{\text{loc}}(d\sigma)$, 
so that 
\begin{equation}\label{sequenceuj}
\left\{ \begin{array}{ll}
-\Delta_pu_{j}=\sigma u_{j-1}^q \, \, \text{ in ~~} \mathbb{R}^n, \quad j=2, 3, \ldots, \\
c_j \, \left(\w\sigma\right)^{\frac{p-1}{p-1-q}}\le u_{j} \le v,\\
0\le u_{j-1} \le u_j , \\
\liminf_{|x| \to \infty} u_j (x)=0.
\end{array} \right.
\end{equation}
Here $c_1=c_0^{\frac{q}{p-1}} \mathfrak{c}^{\frac{q}{p-1-q}} \, K^{-1}$, and 
$$c_j=\left(\mathfrak{c}^{\frac{q}{p-1-q}} \, K^{-1}\right)^{\sum_{l=0}^{j-1}\left(\frac{q}{p-1}\right)^l} c_0^{ \left(\frac{q}{p-1}\right)^{j}},  \quad j=2, 3, \ldots.$$

Suppose that $u_1, \ldots, u_{j-1}$ have been constructed. Let $d\omega_{j-1}= u^q_{j-1}d\sigma$. Then $\omega_{j-1}\in \M$,  since $u_{j-1} \le v$, where  $v \in L^q_{\text{loc}}(d\sigma)$, and $\omega_{j-1}$ is absolutely continuous with respect to the $p$-capacity. Applying Lemma~\ref{comparison} again, we see that there exists a renormalized solution $u_j^k$ to the equation
$$ -\Delta_p u_{j}^k ={\omega_{j-1}}\chi_{B(0, 2^k)} \, \,  \text { in $B(0, 2^k)$}, \quad u_j^k =0 \, \, \text{ on }\,\,  \partial B(0, 2^k).  $$
Arguing by induction, let  $u^k_{j-1}$ be the unique solution of the equation
$$ -\Delta_p u_{j-1}^k ={\omega_{j-2}}\chi_{B(0, 2^k)} \, \,  \text { in $B(0,2 ^k)$}, \quad u_{j-1}^k =0 \, \, \text{ on } \partial B(0,2^k).  $$
Since $u_{j-2} \le u_{j-1}$, by Lemma~\ref{comparison}, we deduce $u_j^k \ge u^k_{j-1}.$ Using Theorem~\ref{thmpotest}, we have
$$0 \le u_j^k \le K \, \w \left[\omega_{j-1}\chi_{B(0, 2^k)}\right]  \le K \, \w (v^q d\sigma) =  v.$$
Letting $u_{j}=\lim_{k \to \infty}u_j^k$ and using again the weak continuity of the $p$-Laplacian and the Monotone Convergence Theorem, we deduce that $u_j$ is a solution to the equation $-\Delta_p u_j = \sigma u^q_{j-1}$ on $\R^n$. 

Moreover, $u_j \le v$ since $u_j^k \le v$ and hence $\liminf_{x\to \infty} u_j(x)=0.$ Furthermore, we have $u_{j-1} \le u_j$ since $u_{j-1}^k \le u_j^k$, for all $k \ge 1.$
On the other hand, applying Theorem \ref{thmpotest} and Lemma \ref{Wolfflemma}, and arguing by induction, we obtain 
\begin{equation*}
\begin{aligned}
u_j & \ge \frac{1}{K} \w(u^q_{j-1}d\sigma)\ge \frac{1}{K} \w\left[c_{j-1}^q \, \left(\w\sigma\right)^{\frac{q(p-1)}{p-1-q}} d \sigma\right] \\
& \ge \mathfrak{c}^{\frac{q}{p-1-q}} c^{\frac{q}{p-1}}_{j-1}  \, K^{-1}
 \left(\w\sigma\right)^{\frac{p-1}{p-1-q}}= c_j\, \left(\w\sigma\right)^{\frac{p-1}{p-1-q}}.
\end{aligned}
\end{equation*}

Letting $u=\lim_{j \to \infty}u_j$ and using Theorem~\ref{weakconv} together with  the Monotone Convergence Theorem, we see that 
$u$ is a solution to the equation  $-\Delta_p u = \sigma u^q$ on $\R^n$. Hence, by Theorem~\ref{thmpotest}, $u \ge \frac{1}{K} \w\left(u^q d \sigma\right)$. Applying 
Theorem~\ref{main}, we deduce the lower bound in \eqref{two-sided}. The upper bound 
follows from $u \le v$ and Theorem~\ref{main}. We also have $\liminf_{x \to \infty}u(x)=0$ since $ u \le v$, and $\liminf_{x\to \infty} v(x)=0$ by Corollary~\ref{inf}.
Notice that by Remark~\ref{varkappa1} we can use here the potentials $\mathbf{K}_{1, p} \sigma$ defined either in terms of $\varkappa(B)$, or $\kappa(B)$ in the case $\alpha=1$, 
 since they are equivalent.

Let us now prove the minimality of $u$. Suppose $w \in L^q_{\text{loc}}(\R^n, d\sigma)$ is any nontrivial $p$-superharmonic solution to \eqref{eq1}. Let $ d\nu =w^qd\sigma$. 
Then by 
Theorem~\ref{thmpotest}, $w \ge \frac{1}{K} \w (w^qd\sigma)$. 
Hence, by Lemma~\ref{abscap} with $\alpha=1$, $\nu$ is absolutely continuous with respect to the $p$-capacity. By Theorem~\ref{thmlowest} with $K^{1-p} \, \sigma$ in place 
of $\sigma$,
$$w \ge C \, K^{\frac{1-p}{p-1-q}} \, \left(\w\sigma\right)^{\frac{p-1}{p-1-q}}.$$
Note that by the choice of $c_0$ above, we have  $\omega_0 \le \nu$. Therefore, 
by 
Lemma~\ref{comparison}, 
the function $u_1^k$ defined by \eqref{eq-u1} satisfies the inequality 
$u_1^k \le w$ in $B(0, 2^k)$ for every $k>0$, and consequently   $u_1 = \lim_{k \to \infty} u_1^k \le w.$ 
Repeating this argument by induction, we obtain $u_j \le w$ for every $j=1, 2, \ldots$. It follows that $\lim_{j \to \infty}u_j = u \le w$, which proves the minimality of $u$. 
This completes the proof of statement (i) of Theorem~\ref{mainthm}. 

To prove statement (ii), suppose that $u$ is a supersolution of \eqref{eq1}. Then by 
Theorem~\ref{thmpotest}, $u \ge \frac{1}{K} \w\left(u^q d \sigma\right)$. Hence, by 
Theorem~\ref{main}, both \eqref{Wolfinite} 
and \eqref{suffcond1} hold.

Statement (iii) is an immediate consequence of Theorem~\ref{thmpotest} (ii). 
\end{proof}

We now are in a position to give a characterization of  $W^{1, p}_{{\rm loc}}$-solutions of \eqref{eq1} stated in Theorem~\ref{W1pthm}. 
We remark that a global analogue of condition \eqref{locWolff},  as was shown earlier by the authors \cite{CV1}, 
is necessary and sufficient for the existence of a finite energy solution $u\in L^{1, p}_0(\R^n)$ to \eqref{eq1}.

\begin{proof}[Proof of  Theorem \ref{W1pthm}]   By Theorem~\ref{mainthm}, if both \eqref{Wolfinite} and \eqref{suffcond1} hold, then  
there exists a $p$-superharmonic solution $u$ to \eqref{eq1} such that \eqref{two-sided} holds. Moreover, by \eqref{Uest}, there is a constant $K>0$ such that 
\begin{equation}\label{lowest-A}
u\ge \frac{1}{K} \w (u^q \, d \sigma). 
\end{equation}

Suppose that additionally \eqref{locWolff} holds for all balls $B$. Applying Theorem~\ref{W1pmain}, we see that there exists a solution $v \in L_{\text{loc}}^{1+q}(\R^n, d\sigma)$ to the integral equation \eqref{integral} 
such that \eqref{alpha} holds with $\alpha=1$. Hence, there exists a constant $c>0$ such that 
$$
c^{-1} \, u(x) \le v(x) \le c \, u(x) \quad d \sigma-\text{a.e.}
$$
Consequently, $u \in L_{\text{loc}}^{1+q}(\R^n, d\sigma)$, and 
$$
\int_B \w (u^q d \sigma_B) \, u^q \, d \sigma \le C \int_B u^{1+q} d \sigma < \infty,
$$ 
for every ball $B$. By a local version of Wolff's inequality \eqref{wolff-ineq-loc}, we see that $u^q d \sigma \in W^{-1, p'}_{{\rm loc}} (\R^n)$. 
Applying Lemma~\ref{loc-sol}, we conclude that $u \in W^{1, p}_{{\rm loc}} (\R^n)$. 

Conversely, if there exists a nontrivial solution $u \in W^{1, p}_{{\rm loc}} (\R^n)$ to \eqref{eq1}, then clearly a quasi-continuous representative of $u$  is a $p$-superharmonic solution, and $u^q d \sigma \in W^{-1, p'}_{{\rm loc}} (\R^n)$. It follows from 
\eqref{Uest} that 
$$
u \le K \, \w (u^q \, d \sigma). 
$$

By Wolff's inequality \eqref{wolff-ineq}, for every ball $B$, 
$$\int_B u^{1+q} \, d \sigma \le K \int_B \w (u^q d \sigma_B) \, u^q \, d \sigma_B 
\le C \, K \, ||u^q \, d \sigma_B ||_{L^{-1, p'} (\R^n)} < \infty.  
$$
By Theorem~\ref{thmlowest}, estimate \eqref{lowerest} holds. Combining these estimates, 
we obtain that \eqref{locWolff} holds for all balls $B$. 
By Theorem~\ref{mainthm}, both \eqref{Wolfinite} and \eqref{suffcond1} hold as well, 
which completes the proof of Theorem~\ref{W1pthm}. \end{proof}

\begin{proof}[Proof of Theorem \ref{fracmain}]
We remark that \eqref{fraclap} is understood in the sense 
$$u = {\bf I}_{2 \al} (u^q \, d \sigma) \quad  \, \, \text{in} \, \, \R^n, \quad u \ge 0.$$
Since ${\bf I}_{2 \al} (u^q \, d \sigma) = {\bf W}_{\al, 2} (u^q \, d \sigma)$, 
 Theorem~\ref{fracmain} is a special case of Theorem~\ref{main} with $p=2$. \end{proof}

\begin{Remark}
{\rm \textbf{(1)} Direct analogues of our main theorems hold for the more general quasilinear 
$\mathcal{A}$-Laplace 
operator $\text{div} \,  \mathcal{A}(x, \nabla u)$ in place of $\Delta_p$:
  \begin{equation}\label{A-p}
  - \text{div} \,  \mathcal{A}(x, \nabla u)= \sigma u^q \quad {\rm in} \, \, \R^n, \qquad \liminf_{x \to \infty} u=0, 
    \end{equation}
  under the standard monotonicity and boundedness assumptions on $\mathcal{A}$ which 
guarantee that the Wolff potential estimates \eqref{Uest} hold (see, e.g., \cite{KM2}, \cite{KuMi}, 
\cite{TW1}, \cite{PV1}).  

\textbf{(2)} Similar results hold for the fully nonlinear $k$-Hessian operator $F_k$  ($k=1, 2, \ldots, n$)  defined by
 \begin{equation}\label{I01}
 F_k[u] =\sum_{1\leq i_{1}<\cdots<i_{k}\leq n}\lambda_{i_{1}}\cdots\lambda_{i_{k}},
  \end{equation}
where $ \lambda_{1},\ldots,\lambda_{n}$ are the eigenvalues of the
Hessian matrix $D^{2}u$ on $\R^n$. In other words, $F_{k}[u]$ is the sum of
the $k\times k$ principal minors of $D^{2}u$, which coincides with
the Laplacian $F_1[u] = \Delta u$ if $k=1$. 

Local Wolff potential estimates for the equation $F_k[u]=\mu$, where $\mu \in \M$, in this case are due to Labutin \cite{Lab} (see also \cite{TW1}); global estimates analogous to \eqref{Uest} can be found in \cite{PV1}. The corresponding ``sublinear'' equation can be written in  the form 
\begin{equation}\label{F-k} 
F_k[u] = \sigma \, |u|^q \quad {\rm in} \, \, \R^n, \qquad \limsup_{x \to \infty} \, u=0, 
\end{equation}
where $0<q<k$, and $u\le 0$ is a $k$-convex function. 

Similar equations in the supercritical  case $q>k$ were considered in \cite{PV1}, and in the critical case $q=k$, in \cite{JV1}. Intrinsic nonlinear potentials of the type 
$\mathbf{K}_{\alpha, p} \sigma$ do not play a role there. However, the reduction of both 
\eqref{A-p} and 
\eqref{F-k} to \eqref{integral} 
is carried over as in the case of the $p$-Laplacian treated above. See details in \cite{PV1}, \cite{JV1}, \cite{JV2}, \cite{CV1}. }
\end{Remark}

\section{Example}\label{eg}

Suppose  $0<q<1$, $n \ge 2$, and  $0< \al <\frac{n}{2}$. In this section we construct 
$\sigma \in M^+(\R^n)$ such that $\kappa(B(0, R)) < \infty$ for every $R>0$, and 
the equation 
$$\left\{ \begin{array}{ll}
(-\Delta)^{\al} u \, =  \sigma & \text{in ~} {\R}^n,\\ 
\displaystyle{\liminf_{x \to \infty} u(x) =0}, & \, 
\end{array} \right.$$ 
has a weak solution, but the equation 
$$\left\{ \begin{array}{ll}
(-\Delta)^{\al} u \, =\sigma \, u^q & \text{in ~} {\R}^n,\\ 
\displaystyle{\liminf_{x \to \infty} u(x) =0}, & \, 
\end{array} \right.$$ 
has no weak solutions. The condition $\kappa(B(0, R)) < \infty$ ensures that locally, 
for $\sigma_{B(0, R)}$ in place of $\sigma$, weak solutions exist.

In other words, we need to construct a measure $\sigma$ such that $\mathbf{I}_{2 \al} \sigma < \infty$ 
a.e., that is, 
\begin{equation}\label{6a}
\int_1^\infty \frac {\sigma(B(0,R))}{R^{n-2 \al}} \frac{dR}{R} < \infty,  
\end{equation}
and $\kappa(B(0, R)) < \infty$ for every $R>0$, 
but 
\begin{equation}\label{6b}
\int_1^\infty \frac{\left[\kappa(B(0, R))\right]^{\frac{q}{1-q}}}{R^{n-2 \al}}\frac{dR}{R} = \infty.   
\end{equation}
 This requires $\kappa(B(0, R))^{\frac{q}{1-q}}$ to grow much faster than 
$\sigma(B(0, R))$ as $R \to \infty$.

\begin{lemma}\label{lemma6-1} Let $0<q<1$ and $0<2 \al < n$. If   
\begin{equation}\label{ineq}
|| \mathbf{I}_{2 \al} \nu||_{L^q(d \sigma)} \le \kappa(\sigma) \,  \nu(\R^n), \quad \forall \nu \in \M, 
\end{equation}
then 
\begin{equation}\label{cond}
\mathcal{K}(\sigma)\!: =\sup_{x \in \R^n} \int_{\R^n} \frac{d \sigma (y)}{|x-y|^{(n-2 \al)q}} \le \kappa(\sigma)^q. 
\end{equation}
\end{lemma}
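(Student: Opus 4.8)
The plan is to test the hypothesis \eqref{ineq} against point masses. First I would fix an arbitrary $x\in\R^n$ and apply \eqref{ineq} with $\nu=\delta_x$, the unit Dirac mass at $x$, which belongs to $\M$. With the normalization convention adopted in the introduction (where the constant $c(\al,n)$ is dropped), $\mathbf{I}_{2\al}\delta_x(y)=|x-y|^{2\al-n}$ and $\delta_x(\R^n)=1$, so \eqref{ineq} becomes
\begin{equation*}
\left(\int_{\R^n}|x-y|^{(2\al-n)q}\,d\sigma(y)\right)^{\frac{1}{q}}=||\mathbf{I}_{2\al}\delta_x||_{L^q(d\sigma)}\le\kappa(\sigma).
\end{equation*}
Raising both sides to the power $q$ gives $\int_{\R^n}|x-y|^{-(n-2\al)q}\,d\sigma(y)\le\kappa(\sigma)^q$, and taking the supremum over $x\in\R^n$ yields \eqref{cond}.

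If one prefers not to evaluate \eqref{ineq} on a measure that is not absolutely continuous, the same conclusion follows by a routine approximation that I would record for completeness. For $\varepsilon>0$ set $\nu_\varepsilon=|B(x,\varepsilon)|^{-1}\chi_{B(x,\varepsilon)}\,dy\in L^1_+(\R^n)$, so that $\nu_\varepsilon(\R^n)=1$. For each fixed $y\ne x$ the average $\mathbf{I}_{2\al}\nu_\varepsilon(y)=|B(x,\varepsilon)|^{-1}\int_{B(x,\varepsilon)}|y-z|^{2\al-n}\,dz$ converges to $|y-x|^{2\al-n}$ as $\varepsilon\to0^+$; in the degenerate case $\sigma(\{x\})>0$ both sides of the asserted inequality are infinite. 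Applying \eqref{ineq} to $\nu_\varepsilon$, raising to the power $q$, and invoking Fatou's lemma gives $\int_{\R^n}|x-y|^{(2\al-n)q}\,d\sigma(y)\le\liminf_{\varepsilon\to0^+}\int_{\R^n}(\mathbf{I}_{2\al}\nu_\varepsilon)^q\,d\sigma\le\kappa(\sigma)^q$, and one concludes as before.

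I do not expect any real obstacle here: the content of the lemma is simply that the best constant in the weak weighted trace inequality \eqref{ineq} dominates the ``diagonal'' quantity obtained by concentrating the competitor measure at a single point. The only points worth a word are the identification of $\mathbf{I}_{2\al}\delta_x$ with the Riesz kernel $|x-\cdot|^{2\al-n}$ (immediate once the normalization constant is dropped, as stipulated in the introduction) and the measurability of $y\mapsto|x-y|^{(2\al-n)q}$, which is clear.
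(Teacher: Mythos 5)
Your proof is correct and is essentially the paper's own argument: the paper also simply sets $\nu=\delta_x$ in \eqref{ineq} and takes the supremum over $x\in\R^n$. The extra mollification paragraph is harmless but unnecessary, since the inequality is stated for all $\nu\in\M$, which includes point masses.
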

\begin{proof} Let $\nu = \delta_x$ in  (\ref{ineq}), and take the supremum of the left-hand side over all $x \in \R^n$. 
\end{proof} 

We will need the following lemma in the radially symmetric case which will be proved elsewhere. 
\begin{lemma}\label{lemvar} Let $0<q<1$ and $0<2 \al < n$. If   $d \sigma = \sigma(|x|) \, dx$ is radially symmetric 
then condition (\ref{cond}) is equivalent to $\mathbf{I}_{2 \al} \sigma \in L^{\frac{1}{1-q}, \frac{q}{1-q}} (d \sigma)$, 
and hence is not only necessary, but also sufficient for (\ref{ineq}). Moreover,  there exists  $c=c(q, \al, n)>0$ 
such that the least constant $\kappa(\sigma)$ in \eqref{ineq} satisfies 
\begin{equation}\label{const}
\mathcal{K}(\sigma) \le \kappa(\sigma)^q \le c \, \mathcal{K}(\sigma).  
\end{equation}
\end{lemma}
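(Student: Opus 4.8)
The plan is to establish Lemma~\ref{lemvar} in three stages: first reduce the trace inequality \eqref{ineq} to radially symmetric test measures $\nu$; then, for radial $\nu$ and $\sigma$, recast \eqref{ineq} as a one-dimensional weighted inequality of Hardy type and invoke the characterization available in the subdiagonal range $q<1$; and finally translate the resulting condition into the equivalent forms $\mathbf{I}_{2\al}\sigma\in L^{\frac{1}{1-q},\frac{q}{1-q}}(d\sigma)$ and \eqref{cond}, keeping track of constants. Since $\mathcal{K}(\sigma)\le\kappa(\sigma)^q$ is already Lemma~\ref{lemma6-1}, only the reverse bound $\kappa(\sigma)^q\le c\,\mathcal{K}(\sigma)$ and the Lorentz description require proof.

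For the reduction to radial $\nu$ I would use rotational averaging. For $R\in SO(n)$ let $R\#\nu$ denote the pushforward; since the Riesz kernel is radial and $\sigma$ is rotation invariant, $\mathbf{I}_{2\al}(R\#\nu)(x)=\mathbf{I}_{2\al}\nu(R^{-1}x)$ and hence $\int_{\R^n}(\mathbf{I}_{2\al}(R\#\nu))^q\,d\sigma=\int_{\R^n}(\mathbf{I}_{2\al}\nu)^q\,d\sigma$ for every $R$. Setting $\bar\nu=\int_{SO(n)}R\#\nu\,dR$, linearity gives $\mathbf{I}_{2\al}\bar\nu=\int_{SO(n)}\mathbf{I}_{2\al}(R\#\nu)\,dR$, and since $t\mapsto t^q$ is concave for $0<q<1$, Jensen's inequality yields $(\mathbf{I}_{2\al}\bar\nu)^q\ge\int_{SO(n)}(\mathbf{I}_{2\al}(R\#\nu))^q\,dR$ pointwise; integrating against $d\sigma$ gives $\int(\mathbf{I}_{2\al}\bar\nu)^q\,d\sigma\ge\int(\mathbf{I}_{2\al}\nu)^q\,d\sigma$, while $\bar\nu(\R^n)=\nu(\R^n)$. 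Thus the best constant $\kappa(\sigma)$ in \eqref{ineq} is unchanged if $\nu$ is restricted to radial measures, so it suffices to prove $\int(\mathbf{I}_{2\al}\nu)^q\,d\sigma\le c\,\mathcal{K}(\sigma)\,\nu(\R^n)^q$ for radial $\nu$.

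For radial $\nu$ and $\sigma$, both $\mathbf{I}_{2\al}\nu$ and $\mathbf{I}_{2\al}\sigma$ are comparable to radially non-increasing functions, and one has the robust lower bound $\mathbf{I}_{2\al}\nu(x)\gtrsim |x|^{2\al-n}\,\nu(B(0,|x|))$; in these terms $\mathcal{K}(\sigma)=\|\mathbf{I}_{n-(n-2\al)q}\sigma\|_{L^\infty}$ is comparable to its value as $|x|\to0$. Passing to the radial variable $t=|x|$ converts \eqref{ineq} into a weighted norm inequality on $(0,\infty)$ for an averaging (Hardy-type) operator, from the space of measures of bounded total mass into $L^q(d\sigma)$ with $q<1$. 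In this subcritical regime $q<p$ (here $p=1$ in the sense of the source space), the relevant Hardy inequality is governed by an \emph{integral} condition rather than a supremum one (see the weighted Hardy inequality theory, e.g.\ \cite{Maz11}); unwinding it in the present setting produces an integral condition on $t\mapsto\sigma(B(0,t))$ that, after rewriting through the distribution function of $\mathbf{I}_{2\al}\sigma$ with respect to $\sigma$, is exactly $\mathbf{I}_{2\al}\sigma\in L^{\frac{1}{1-q},\frac{q}{1-q}}(d\sigma)$, and whose finiteness controls $\kappa(\sigma)^q$. Finally, a dyadic decomposition together with the monotonicity of radial Riesz potentials shows that this Lorentz condition is equivalent to \eqref{cond} with $\mathcal{K}(\sigma)\le\kappa(\sigma)^q\le c\,\mathcal{K}(\sigma)$, $c=c(q,\al,n)$.

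The main obstacle is the middle step: carrying out the one-dimensional reduction cleanly — in particular controlling the spherical average of the Riesz kernel $\int_{S^{n-1}}|t e_1-r\omega|^{2\al-n}\,d\omega$ near $t=r$ (a logarithmic factor when $2\al=1$, a mild power singularity otherwise) — and then applying the $q<1$ Hardy characterization with explicit control of constants, so that the resulting integral condition matches $\mathbf{I}_{2\al}\sigma\in L^{\frac{1}{1-q},\frac{q}{1-q}}(d\sigma)$ and delivers $\kappa(\sigma)^q\le c\,\mathcal{K}(\sigma)$. By contrast, once the monotone radial structure is in place, identifying the Hardy integral with both the Lorentz quasinorm and with $\mathcal{K}(\sigma)$ is routine; and the first stage (reduction to radial $\nu$) is the conceptual point that makes the whole argument possible, since for non-radial $\sigma$ the pointwise condition \eqref{cond} is genuinely weaker than \eqref{ineq}.
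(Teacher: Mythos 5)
First, note that the paper itself does not prove Lemma~\ref{lemvar}: it is stated with the remark that it ``will be proved elsewhere,'' so there is no argument in the text to compare yours against, and your proposal has to stand on its own. As it stands it does not. The first stage is correct and genuinely useful: pushing $\nu$ forward by rotations, averaging over $SO(n)$, and using concavity of $t\mapsto t^q$ does show that the least constant in \eqref{ineq} is unchanged when $\nu$ is restricted to radial measures. The trouble begins with the two structural claims on which your middle stage rests, namely that radial Riesz potentials are comparable to radially non-increasing functions and that $\mathcal{K}(\sigma)$ is comparable to its value near the origin. Both fail when $2\al\le 1$. Take $\sigma_\delta$ to be normalized Lebesgue measure on the annulus $\{1<|x|<1+\delta\}$: then $\mathbf{I}_{2\al}\sigma_\delta\approx\delta^{2\al-1}$ on the annulus (a logarithm when $2\al=1$) while it stays bounded at the origin, so it is not comparable to a non-increasing radial function; and when $(n-2\al)q>n-1$ one finds $\mathcal{K}(\sigma_\delta)\approx\delta^{\,n-1-(n-2\al)q}$, attained on the annulus, not at $0$. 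Worse for the quantitative claim \eqref{const}: testing \eqref{ineq} with $\nu=\sigma_\delta$ gives $\kappa(\sigma_\delta)^q\ge\int(\mathbf{I}_{2\al}\sigma_\delta)^q\,d\sigma_\delta\approx\delta^{(2\al-1)q}$, whereas $\mathcal{K}(\sigma_\delta)\approx 1+\delta^{\,n-1-(n-2\al)q}$, and the ratio $\kappa(\sigma_\delta)^q/\mathcal{K}(\sigma_\delta)$ blows up as $\delta\to 0$ (for instance like $\delta^{-(n-1)(1-q)}$ when $(n-2\al)q>n-1$). So in the range $2\al\le1$ no elaboration of your outline can deliver the upper bound in \eqref{const} with $c=c(q,\al,n)$ uniform over all radial $\sigma$; the near-diagonal singularity of the spherical average of the kernel, which you describe as a ``mild'' technical obstacle, actually changes the answer, and any correct proof must either exploit the complementary regime (where the spherical average of $|x-y|^{2\al-n}$ is comparable to $\max(|x|,r)^{2\al-n}$) or use information your sketch does not.

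Even in that favorable regime the proposal is an outline rather than a proof. The one-dimensional Hardy-type characterization for $q<1$ is invoked by name but never stated; the passage from it to the Lorentz condition $\mathbf{I}_{2\al}\sigma\in L^{\frac{1}{1-q},\frac{q}{1-q}}(d\sigma)$, and from there to the pointwise quantity $\mathcal{K}(\sigma)$ with constants depending only on $n,q,\al$ --- which, beyond Lemma~\ref{lemma6-1}, is the entire content of the lemma --- is dismissed as ``routine unwinding'' and never carried out. In short, you have established the reduction to radial $\nu$ and nothing more: the asserted equivalences and the bound $\kappa(\sigma)^q\le c\,\mathcal{K}(\sigma)$ remain unproved, and the heuristics offered for them are false precisely where the statement is most delicate.
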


\begin{cor}\label{cor-c} Let $\sigma_{R, \gamma} = \chi_{B(0, R)} |x|^{-\gamma}$, where 
$0\le \gamma<n-q(n-2 \al)$ and $R>0$. Then 
\begin{equation}\label{rad-const}
\frac{\omega_n}{n-\gamma - q(n-2 \al)}     \le \frac{\kappa(\sigma_{R, \gamma})^q} {R^{{n-\gamma-q(n-2 \al)}}}
 \le \frac{c}{n-\gamma - q(n-2 \al)},  
\end{equation}
where $c=c(q, \al, n)$, and $\omega_n= |S^{n-1}|$ is the surface area of the unit sphere. 
\end{cor}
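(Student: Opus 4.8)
The plan is to deduce \eqref{rad-const} from a direct two-sided estimate of the quantity $\mathcal{K}(\sigma_{R,\gamma})=\sup_{x\in\R^n}\int_{B(0,R)}|y|^{-\gamma}|x-y|^{-(n-2\al)q}\,dy$ from Lemma~\ref{lemma6-1}, and then to appeal to Lemma~\ref{lemvar}. Indeed, since $\sigma_{R,\gamma}=\chi_{B(0,R)}|x|^{-\gamma}$ is radially symmetric, \eqref{const} gives $\mathcal{K}(\sigma_{R,\gamma})\le\kappa(\sigma_{R,\gamma})^q\le c\,\mathcal{K}(\sigma_{R,\gamma})$ with $c=c(q,\al,n)$; hence it suffices to show that $\mathcal{K}(\sigma_{R,\gamma})$ is comparable to $R^{\,n-\gamma-q(n-2\al)}/(n-\gamma-q(n-2\al))$, the lower comparison constant being exactly $\omega_n$ and the upper one absolute.

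For the lower bound I would take $x=0$ and pass to polar coordinates, so that $\mathcal{K}(\sigma_{R,\gamma})\ge\int_{B(0,R)}|y|^{-\gamma-q(n-2\al)}\,dy=\omega_n R^{\,n-\gamma-q(n-2\al)}/(n-\gamma-q(n-2\al))$; the integral converges precisely because $\gamma<n-q(n-2\al)$, and since $\kappa(\sigma_{R,\gamma})^q\ge\mathcal{K}(\sigma_{R,\gamma})$ this already yields the left-hand inequality in \eqref{rad-const}.

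For the upper bound I would first rescale: with $y=Rz$ and $x=Rw$ the inner integral equals $R^{\,n-\gamma-q(n-2\al)}\int_{B(0,1)}|z|^{-\gamma}|w-z|^{-(n-2\al)q}\,dz$, reducing the task to a bound on $\int_{B(0,1)}|z|^{-a}|w-z|^{-b}\,dz$ uniform in $w\in\R^n$, where $a=\gamma\ge0$, $b=(n-2\al)q\in(0,n)$ and $a+b<n$. The key step is the elementary pointwise inequality $|z|^{-a}|w-z|^{-b}\le\min(|z|,|w-z|)^{-a-b}$ (verified by comparing $|z|$ with $|w-z|$), which decouples the estimate into $\int_{B(0,1)}|z|^{-a-b}\,dz=\omega_n/(n-a-b)$ and $\int_{B(0,1)}|w-z|^{-a-b}\,dz$; the latter I would control by $3^{\,n}\omega_n/(n-a-b)$ when $|w|\le2$ (using $B(0,1)\subset B(w,3)$) and by $|B(0,1)|=\omega_n/n\le\omega_n/(n-a-b)$ when $|w|>2$ (using $|w-z|\ge1$ on $B(0,1)$). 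Undoing the scaling and applying Lemma~\ref{lemvar} then gives \eqref{rad-const}.

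The only delicate point is that the upper constant must blow up at exactly the rate $1/(n-\gamma-q(n-2\al))$ as $\gamma\uparrow n-q(n-2\al)$; this forces one to keep the two singularities of the kernel coupled (through the $\min(|z|,|w-z|)^{-a-b}$ bound) rather than estimating $\int_{B(0,1)}|z|^{-a}\,dz$ and $\int_{B(0,1)}|w-z|^{-b}\,dz$ separately, whose constants remain bounded. The extremal configuration is $w=0$, where the two singularities merge into a single $|z|^{-(a+b)}$ singularity. Apart from this, the argument is a routine polar-coordinate computation, and the only external inputs are Lemma~\ref{lemma6-1} together with the radial result Lemma~\ref{lemvar}.
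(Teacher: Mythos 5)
Your argument is correct, and its skeleton is the same as the paper's: both proofs reduce \eqref{rad-const} to a two-sided estimate of $\mathcal{K}(\sigma_{R,\gamma})$ and then invoke Lemma~\ref{lemma6-1} together with the radial equivalence \eqref{const} of Lemma~\ref{lemvar}, with the lower bound coming from the polar-coordinate evaluation at $x=0$, exactly as in the paper. The difference lies in the upper bound: the paper simply writes $\mathcal{K}(\sigma_{R,\gamma})$ as the value of the integral at $x=0$, implicitly using that for a kernel which is a product of two radially decreasing factors the supremum in \eqref{cond} is attained at the origin (a rearrangement-type fact left unstated), whereas you prove the required bound $\sup_{x}\int_{B(0,R)}|y|^{-\gamma}|x-y|^{-q(n-2\al)}\,dy \le C(n)\,\omega_n R^{\,n-\gamma-q(n-2\al)}/(n-\gamma-q(n-2\al))$ directly, by scaling to the unit ball and using the pointwise bound $|z|^{-a}|w-z|^{-b}\le \min\bigl(|z|,|w-z|\bigr)^{-a-b}\le |z|^{-a-b}+|w-z|^{-a-b}$ with $a=\gamma$, $b=q(n-2\al)$, $a+b<n$. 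This costs you an extra dimensional factor (harmless, since the upper constant $c$ in \eqref{rad-const} is allowed to depend on $n$), but it makes the step the paper glosses over completely explicit, and your observation that the two singularities must be kept coupled to capture the blow-up rate $1/(n-\gamma-q(n-2\al))$ as $\gamma\uparrow n-q(n-2\al)$ is exactly the right point: the decoupled estimates of $\int|z|^{-a}$ and $\int|w-z|^{-b}$ alone would not produce the correct rate, which matters because the construction in Section~6 relies on this rate through the choice of $\epsilon_k$.
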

\begin{proof} Letting $x=0$ in (\ref{cond}) we have 
\begin{equation*}
\begin{aligned}
\mathcal{K}(\sigma_{R, \gamma}) & = \int_{|y|<R} \frac{|y|^{-\gamma}}{|y|^{q(n-2 \al)}} dy= 
\omega_n  \int_0^R r^{-\gamma -q(n-2 \al)+n-1} dr \\
&=\frac{\omega_n}{n-\gamma - q(n-2 \al)} R^{n-\gamma-q(n-2 \al)}. 
\end{aligned}
\end{equation*}
Hence, (\ref{const}) follows from the preceding estimate and Lemma~\ref{lemvar}. 
\end{proof}

Let 
$$
\sigma = \sum_{k=1}^\infty c_k \sigma_{k, \gamma_k} (x+x_k), 
$$
where $|x_k|=k$, $\gamma_k = n-q(n-2 \al) - \epsilon_k$, and $c_k$, $\epsilon_k$ 
are picked so that $\sum_{k=1}^\infty c_k < \infty$, and $\epsilon_k \to 0$ fast enough; 
it suffices to set $$c_k=\frac{1}{k^{2}},  \quad \epsilon_k = \frac{1}{k^{n+2}}.$$

Let $R>0$. Clearly, 
$$
\sigma(B(0, R)) \le  \sum_{k=1}^\infty c_k  \sigma_{k, \gamma_k} (B(x_k, R)) 
\le \sum_{k=1}^\infty c_k \sigma_{k, \gamma_k} (B(0, R)). 
$$
Here 
\begin{equation*}
\begin{aligned}
 \sigma_{k, \gamma_k} (B(0, R)) & = \omega_n \int_0^{\min (k, R)} r^{-\gamma_k + n-1} dr  \\
&= \frac{\omega_n}{n-\gamma_k} \min (k, R)^{n-\gamma_k}\le  \frac{\omega_n}{q(n-2 \al)} 
\min(k, R)^{q(n-2 \al) + \epsilon_k}. 
\end{aligned}
\end{equation*}
Hence, for $R\ge 1$ 
$$
\sigma (B(0, R)) \le \frac{\omega_n}{q(n-2 \al)} \sum_{k=1}^N c_k k^{q(n-2 \al) + \epsilon_k} 
+ \frac{\omega_n}{q(n-2 \al)}  R^{q(n-2 \al) +\epsilon_N}  \sum_{k=N}^\infty  c_k.  
$$ 
Picking $N$ large enough so that $\epsilon_N < (1-q)(n-2 \al)$, we obtain \eqref{6a}.

Using Corollary \ref{cor-c}, we will show that  $\kappa(B(0, R)) < \infty$ for every $R>0$, 
since $\epsilon_k>0$, and consequently $\gamma_k$ is below the critical exponent $n-q(n-2 \al)$.
Indeed, since $\kappa(\sigma)$ is obviously invariant under translations, 
\begin{equation}\label{6-sum}
[\kappa(B(0, R))]^q \le  \sum_{k=1}^\infty c_k  \left[\kappa( \chi_{B(x_k,R)}\sigma_{k, \gamma_k})\right]^q. 
\end{equation}
If $k>2R$, then $|x-x_k|< R$, $|x|<k$ and $|x_k|=k$ yield $k>|x|>\frac k 2$.  Consequently, 
$ \chi_{B(x_k,R)} \sigma_{k, \gamma_k}(x) \approx  \frac{c}{k^{\gamma_k}} \chi_{B(x_k,R)}$. 
It follows that, for $\nu \in \M$,  
\begin{equation*}
\begin{aligned}
|| \mathbf{I}_{2 \al} \nu||^q_{L^q(\chi_{B(x_k,R)}d \sigma_{k, \gamma_k})} & \le \frac{c}{k^{\gamma_k}} 
|| \mathbf{I}_{2 \al} \nu||^q_{L^q(\chi_{B(x_k,R)}dx)}\\ &\le \frac{c}{k^{\gamma_k}}  \left[\kappa( \chi_{B(x_k,R)})\right]^q \nu(\R^n)^q.
\end{aligned}
\end{equation*}
Corollary \ref{cor-c} with $\gamma=0$, yields $\left[\kappa( \chi_{B(x_k,R)})\right]^q\approx R^{n-q(n-2 \al)}$. Hence, 
$$
\left[\kappa( \chi_{B(x_k,R)}\sigma_{k, \gamma_k})\right]^q \le \frac{c}{k^{\gamma_k}} R^{n-q(n-2 \al)}. 
$$ 
From this and \eqref{6-sum} we deduce 
$$
[\kappa(B(0, R))]^q \le  \sum_{1\le k \le 2R} c_k  [\kappa(\sigma_{k, \gamma_k})]^q 
+ c R^{n-q(n-2 \al)} \sum_{k > 2R}^\infty \frac{c_k} {k^{\gamma_k}} < \infty.   
$$
Note that each term in the first  sum is finite by Corollary \ref{cor-c}
since $0<\gamma_k<n-q(n-2 \al)$ is below the critical 
exponent.

Let us show now that \eqref{6b} holds. By Lemma \ref{lemma6-1}, 
\begin{equation*}
\begin{aligned}
&[ \kappa(B(0,R))]^q \ge \mathcal{K}(\sigma_{B(0, R)}) =\sup_{x \in \R^n}       \sum_{k=1}^\infty c_k  \int_{|y|<R} \frac {\sigma_{k, \gamma_k}(y+x_k)} {|x-y|^{q(n-2 \al)}} dy \\
& \ge \sup_{k\ge 1} c_k  \int_{|y|<R} \frac {\sigma_{k, \gamma_k}(y+x_k)} {|x_k+y|^{q(n-2 \al)}} dy 
= \sup_{k\ge 1}  c_k  \int_{|z-x_k|<R} \frac {\sigma_{k, \gamma_k}(z)} {|z|^{q(n-2 \al)}} dz 
\\
& =\sup_{k\ge 1}  c_k  \int_{|z-x_k|<R, \, |z|< k} \frac {dz} {|z|^{\gamma_k + q(n-2 \al)}} 
= \sup_{k\ge 1}  c_k  \int_{|z-x_k|<R, \, |z|< k} \frac {dz} {|z|^{n-\epsilon_k}}.  
\end{aligned}
\end{equation*}
If  $k\le \frac R 2$, then $B(0, k) \subset B(x_k, R)$. Hence,  for $R >2$, 
\begin{equation*}
\begin{aligned}
& [ \kappa(B(0,R))]^q \ge  \sup_{1\le k\le \frac{R}{2}}  c_k  \int_{|z|< k} \frac {dz} {|z|^{n-\epsilon_k}}  
\ge 
\omega_n \sup_{1 \le k\le \frac R 2}  \frac{c_k}{\epsilon_k} \, k^{\epsilon_k}\\
& \ge \omega_n 
 \sup_{\frac R 4 \le k\le \frac R 2}  \frac{c_k}{\epsilon_k} \ge  \omega_n 4^{-n} R^n.  
\end{aligned}
\end{equation*}
Since $\frac{n}{1-q}> n - 2 \al$, the preceding estimate yields \eqref{6b} as claimed. 

\section{Equations with singular gradient terms} \label{gradeq}

In this section, we 
 investigate the relationship between \eqref{eq1} and \eqref{ric-eq}, and 
 prove Theorem~\ref{main-riccati}  using the framework of (locally) renormalized solutions. We will show that transformation \eqref{substitution} sends a solution $u$ of \eqref{eq1} to a solution $v$ of \eqref{ric-eq}, but in the opposite direction, a solution $v$ of \eqref{ric-eq}  generally gives rise merely to a supersolution $u$ of \eqref{eq1}. Note that $u$ is a genuine solution only  under some 
 additional assumptions on $v$ as is clear from the following example.
 
  For  $0<q<1$, $p=2$,  $n \ge 3$, and $\sigma=0$, obviously, $v=c |x|^{(1-q)(2-n)}$ is a weak solution of \eqref{ric-eq} for an appropriate $c>0$, but the corresponding $u$, which is a constant multiple of 
 $ |x|^{2-n}$, is only superharmonic, and not harmonic.  Thus, in this case $v$ satisfies \eqref{necessary}, but not \eqref{sufficient}. 
 
\begin{proof}[Proof of Theorem \ref{main-riccati}]

To prove (i), suppose $u$ is a $p$-superharmonic solution to equation \eqref{eq1}. Let $\gamma=\frac{p-1-q}{p-1}$,  $v= \frac{1}{\gamma}u^{\gamma}$ and 
$$ u_k = \min\left (u, \, (\gamma k)^{\frac {1} {\gamma}}\right), 
 \quad v_k=\min (v, \, k),  \quad k=1, 2, \ldots .
 $$ 
 Notice that $v$ is $p$-superharmonic  since $x \longmapsto x^{\gamma}$ is concave and increasing.  We have
\begin{equation} \label{q-test}
\int_{\R^n} |Du|^{p-2}Du \cdot \nabla \phi \,  dx  = \int_{R^n} u^q \phi\, d \sigma, \quad \forall \,  \phi \in C_0^{\infty}(\R^n).
\end{equation}
By Theorem 3.15 in \cite{KKT}, $u$ is a (locally) renormalized solution to \eqref{eq1}. Therefore,
\begin{equation} \label{renormalized}
\int_{\R^n} |Du|^{p-2}Du \cdot \nabla (h(u)\phi) \,  dx  = \int_{R^n} u^q h(u) \phi\, d \sigma ,
\end{equation}
for all $\phi \in C_0^{\infty}(\R^n)$ and $h \in W^{1,\infty}(\R)$ with $h'$ having compact support.

Suppose $ \phi \in C_0^{\infty}(\R^n)$ and $h(u)= \frac{1}{u_k^q}$. Then 
$$\int_{\R^n} |Du|^{p-2}Du \cdot \nabla \left(\frac{\phi}{u_k^q}\right) \,  dx  = \int_{R^n} u^q \frac{\phi}{u_k^q} \, d \sigma .$$
Consequently, 
\begin{equation}
\begin{aligned}
& \int_{\R^n} |Du|^{p-2}Du \cdot \nabla \phi \, \frac{1}{u_k^q} \,  dx \\
= & \int_{R^n} u^q \frac{\phi}{u_k^q} \, d \sigma  +  q \int_{\R^n} |Du|^{p-2}Du \cdot \nabla u_k \, \frac{\phi}{u_k^{1+q}} \,  dx.
\end{aligned}
\end{equation}
Notice that $Du=(\gamma v)^{\frac{1}{\gamma}-1}Dv,$ and so 
$$|Du|^{p-1}=(\gamma v)^{\frac{q}{\gamma}}|Dv|^{p-1}.$$ 
Since $u$ is $p$-superharmonic,  
\begin{equation} \label{vkloc}
(\gamma v)^{\frac{q}{\gamma}}|Dv|^{p-1} \in L^1_{\text{loc}}(\R^n).
\end{equation}
From this it follows, 
\begin{equation}\label{v-eq}
\begin{aligned}
& \int_{\R^n} |Dv|^{p-2}Dv \cdot \nabla \phi \, \frac{(\gamma v )^{\frac{q}{\gamma}}}{(\gamma v_k )^{\frac{q}{\gamma}}} \,  dx \\  =& \int_{\R^n} (\gamma v )^{\frac{q}{\gamma}} \frac{\phi}{(\gamma v_k )^{\frac{q}{\gamma}}} \, d \sigma  +  {b}  \int_{\R^n} |Dv|^{p-2}Dv \cdot \nabla v_k \, \frac{(\gamma v )^{\frac{q}{\gamma}}\phi}{ v_k (\gamma v_k )^{\frac{q}{\gamma}}} \,  dx.
\end{aligned}
\end{equation}
Let $E=\text{supp}( \phi )$; then $v_1 \ge \delta_E >0$ a.e., and hence q.e., since $v_1$ is a positive superharmonic function. Notice that the sequence $\{v_k\}$ is increasing,  so that $v_k \ge \delta_E >0$ q.e. Consequently, 
$$|Dv|^{p-2}Dv \cdot \nabla \phi \, \frac{(\gamma v )^{\frac{q}{\gamma}}}{(\gamma v_k )^{\frac{q}{\gamma}}} \le \frac{||\nabla \phi||_{L^{\infty}(\R^n)}}{(\gamma \delta_E )^{\frac{q}{\gamma}}}  \,|Dv|^{p-1} (\gamma v )^{\frac{q}{\gamma}}\, \,  \text{ on } \,  E.$$
Using \eqref{vkloc} and the Dominated Convergence Theorem, we obtain
\begin{equation} \label{lim-C}
\int_{\R^n} |Dv|^{p-2}Dv \cdot \nabla \phi \, \frac{(\gamma v )^{\frac{q}{\gamma}}}{(\gamma v_k )^{\frac{q}{\gamma}}} \,  dx    \to \int_{\R^n} |Dv|^{p-2}Dv \cdot \nabla \phi \,  dx, 
\end{equation} 
 as $k \to \infty$, where the right-hand side is obviously finite.

Assuming temporarily that $\phi\ge 0$, we obtain from \eqref{v-eq}, 
\begin{equation} \label{compare}
\begin{aligned}
0 \le & {b}  \int_{\R^n} |Dv|^{p-2}Dv \cdot \nabla v_k \, \frac{(\gamma v )^{\frac{q}{\gamma}}\phi}{ v_k (\gamma v_k )^{\frac{q}{\gamma}}} \,  dx \\ 
\le & \int_{\R^n} |Dv|^{p-2}Dv \cdot \nabla \phi \, \frac{(\gamma v )^{\frac{q}{\gamma}}}{(\gamma v_k )^{\frac{q}{\gamma}}} \,  dx\le C, 
\end{aligned}
\end{equation} 
where by \eqref{lim-C}, $C$ does not depend on $k$. Clearly, 
$$0 \le  |Dv|^{p-2}Dv \cdot \nabla v_k \, \frac{(\gamma v )^{\frac{q}{\gamma}}\phi}{ v_k (\gamma v_k )^{\frac{q}{\gamma}}}\le  |Dv|^{p-2}Dv \cdot \nabla v_{k+1} \, \frac{(\gamma v )^{\frac{q}{\gamma}}\phi}{ v_{k+1} (\gamma v_{k+1} )^{\frac{q}{\gamma}}}.$$
Thus, using the Monotone Convergence Theorem and \eqref{compare}, we deduce 
$$\int_{\R^n} |Dv|^{p-2}Dv \cdot \nabla v_k \, \frac{(\gamma v )^{\frac{q}{\gamma}}\phi}{ v_k (\gamma v_k )^{\frac{q}{\gamma}}} \,  dx  \to\int_{\R^n}  \frac{|Dv|^p \, \phi}{ v} \,  dx 
\le \frac{C}{b}.
$$
 as $k \to \infty$. Hence, 
\begin{equation}\label{gradloc}
\frac{|Dv|^p}{ v} \in L^1_{\text{loc}}(\R^n, dx).
\end{equation}

Notice that, for all $\phi \in C^\infty_0(\R^n)$,  
$$(\gamma v )^{\frac{q}{\gamma}} \frac{|\phi|}{(\gamma v_k )^{\frac{q}{\gamma}}} \le \frac{||\phi||_{L^{\infty}}}{(\delta_E )^{\frac{q}{\gamma}}}   (\gamma v )^{\frac{q}{\gamma}}   \quad {\rm q.e.} \quad \text{on} \,\, E=\text{supp}( \phi ).$$
Since $\sigma$ is absolutely continuous with respect to the $p$-capacity, it follows that the preceding inequality holds on $E$  $d\sigma$-a.e.
Using the Dominated Convergence Theorem and the fact that $(\gamma v ) ^{\frac{q}{\gamma}} = u^q \in L^1_{\text{loc}}(\R^n, d\sigma),$ we obtain, for all $\phi \in C^\infty_0(\R^n)$,  
$$\int_{R^n} (\gamma v )^{\frac{q}{\gamma}} \frac{\phi}{(\gamma v_k )^{\frac{q}{\gamma}}} \, d \sigma  \to \int_{R^n} \phi \, d \sigma. $$
Clearly, 
$$ \left | \, |Dv|^{p-2}Dv \cdot \nabla v_k \, \frac{(\gamma v )^{\frac{q}{\gamma}}\phi}{ v_k (\gamma v_k )^{\frac{q}{\gamma}}} \, \right | \le \frac{|Dv|^p|\phi|}{ v}.$$ 
Using \eqref{gradloc} and the Dominated Convergence Theorem again, we obtain 
$$\int_{\R^n} |Dv|^{p-2}Dv \cdot \nabla v_k \, \frac{(\gamma v )^{\frac{q}{\gamma}}\phi}{ v_k (\gamma v_k )^{\frac{q}{\gamma}}} \,  dx \to 
\int_{\R^n} \frac{|Dv|^p\, \phi}{ v}  \,  dx  \text{\quad as } k \to \infty . $$
Therefore, letting $k \to \infty $ in \eqref{v-eq}, we deduce
$$\int_{\R^n} |Dv|^{p-2}Dv \cdot \nabla \phi  \, dx  =   {b}  \int_{\R^n} \frac{|Dv|^p\, \phi}{ v}  \,  dx  + \int_{R^n} \phi \, d \sigma, \quad \forall \, \phi \in C_0^{\infty}(\R^n). 
$$ 
Thus, $v$ is a $p$-superharmonic (and hence locally renormalized) solution to  \eqref{ric-eq}. Moreover, if both \eqref{Wolfinite} and \eqref{suffcond1} hold, then  by Theorem~\ref{mainthm} the minimal solution $u$ satisfies \eqref{two-sided}, and consequently 
$v$ satisfies both \eqref{lowerB} and \eqref{upperB}.

To prove (ii), suppose $v$ is a $p$-superharmonic solution to \eqref{ric-eq}.  Let  $\omega_k = -\Delta_p v_k$. Then   $v_k\in W^{1,p}_{{\rm loc}}(\R^n)\bigcap L^\infty(\R^n)$ is $p$-superharmonic, and 
\begin{equation}\label{e1}
-\Delta_p v_k = {b}  \frac{|\nabla v_k|^p}{v_k} +\sigma \, \chi_{v<k} +\tilde \omega_k,
 \end{equation}
where $\tilde \omega_k$ is a nonnegative measure in $\R^n$ supported on 
$\{v=k\}$.

We have 
$$u_k= (\gamma  v_k) ^{\frac{1}{\gamma}} \quad \text{and} \quad u_k\in W^{1,p}_{{\rm loc}}(\R^n)\cap L^\infty(\R^n),$$ 
since $v_k\in W^{1,p}_{{\rm loc}}(\R^n)\cap L^\infty(\R^n)$ and $\frac{1}{\gamma}=\frac{p-1}{p-1-q} > 1$. Let $\mu_k=-\Delta_p u_k$. Then it follows, 
\begin{equation}\label{eq2}
\mu_k=-\Delta_p u_k = -\Delta_p v_k \, (\gamma v_k)^{\frac q {\gamma}} - {b} \frac{|\nabla v_k|^p}{v_k}  \, (\gamma v_k)^{\frac q {\gamma}}  \ge 0.  
\end{equation}

Indeed, for any $\phi \in C^\infty_0(\R^n)$, 
\begin{equation*}
\begin{aligned}
& \int_{\R^n} \phi \, (\gamma v_k)^{\frac q {\gamma}}\, d \omega_k = 
\int_{\R^n}  \nabla (\phi \, (\gamma v_k)^{\frac q {\gamma}}) \cdot \nabla v_k \, |\nabla v_k|^{p-2}   \, dx \\
= & \int_{\R^n}   (\gamma v_k)^{\frac q {\gamma}} \nabla \phi    \cdot \nabla v_k \, |\nabla v_k|^{p-2}  \, dx  + {b}   \, \int_{\R^n}  (\gamma v_k)^{\frac q {\gamma}} \phi \, \frac{|\nabla v_k|^p}{v_k} \,  \,dx. 
\end{aligned}
\end{equation*}
Hence, 
\begin{equation*}
\begin{aligned}
\langle \phi, \mu_k\rangle  &= \int_{\R^n} \nabla \phi \, \cdot  \nabla ((\gamma v_k)^{\frac 1 {\gamma}}) \, |\nabla ((\gamma v_k)^{\frac 1 {\gamma}})|^{p-2}  \, dx \\
& = \int_{\R^n} \nabla \phi \,  \cdot  \nabla v_k \, |\nabla v_k|^{p-2} \, (\gamma v_k)^{\frac q {\gamma}} \, dx \\
& = \int_{\R^n} \phi \, (\gamma v_k)^{\frac q {\gamma}} \, d \omega_k 
- {b}  \, \int_{\R^n}    \phi  \, \frac{| \nabla v_k |^p}{v_k} \, (\gamma v_k)^{\frac q {\gamma}} \,  dx \\ 
& =\int_{\R^n} \phi \,  (\gamma v_k)^{\frac q {\gamma}} \  \chi_{v<k} \, d \sigma + \int_{\R^n} \phi \,  (\gamma v_k)^{\frac q {\gamma}} \,   d \tilde \omega_k,
\end{aligned}
\end{equation*}
where in the last expression we used \eqref{e1}. From the preceding estimates it follows that  
$
\langle \phi, \mu_k\rangle \ge 0$ if $\phi \ge 0$,   and consequently $u_k$ is $p$-superharmonic.

Clearly,  $u= (\gamma v)^{\frac 1 {\gamma}}<+\infty$-a.e., and  $u= \lim 
_{k \to +\infty} u_k$ is $p$-superharmonic in $\R^n$ as the limit of the 
increasing sequence of $p$-superharmonic functions $u_k$. 

Since $v$ is a $p$-superharmonic solution of the equation \eqref{ric-eq}, it follows that $v$ is a locally renormalized solution (see \cite{KKT}).
Then, for all $\phi \in C_0^{\infty}(\R^n)$ and $h \in W^{1,\infty}(\R)$ with $h'$ having compact support, we obtain 
\begin{equation} \label{ric-test}
\begin{aligned}
\int_{\R^n} |Dv|^{p-2}Dv \cdot \nabla  (h(v)\phi )\, dx& = {b}  \int_{\R^n} \frac{|D v|^p}{v}\, h(v) \phi\, dx \\& + \int_{\R^n} h(v) \phi \, d \, \sigma.
\end{aligned}
\end{equation}
Let  $\phi \in C_0^{\infty}(\R^n)$, $\phi \ge 0$. For $k >0$, set $h(v)= (\gamma v_k )^{\frac{q}{\gamma}}$. 
Then
$$\int_{\R^n} |Dv|^{p-2}Dv \cdot \nabla  ((\gamma v_k)^{\frac{q}{\gamma}}\phi )\, dx= {b}  \int_{\R^n} \frac{|D v|^p}{v}\, (\gamma v_k)^{\frac{q}{\gamma}}\phi \, dx + \int_{\R^n} (\gamma v_k)^{\frac{q}{\gamma}}\phi  \, d \, \sigma,$$
which yields 
\begin{equation*} 
\begin{aligned}
& \int_{\R^n} |Dv|^{p-2}Dv \cdot \nabla \phi \, (\gamma v_k)^{\frac{q}{\gamma}} \, dx + {b} \int_{\R^n} |Dv|^{p-2}Dv \cdot \nabla  v_k \, \frac{(\gamma v_k)\frac{q}{\gamma}}{v_k} \, \phi \, dx \\
& = {b}   \int_{\R^n} \frac{|D v|^p}{v}\, (\gamma v_k)^{\frac{q}{\gamma}}\phi \, dx + \int_{\R^n} (\gamma v_k)^{\frac{q}{\gamma}}\phi  \, d \, \sigma.
\end{aligned}
\end{equation*}
Hence,
\begin{equation}\label{usub}
\begin{aligned}
\int_{\R^n} |Dv|^{p-2}Dv \cdot \nabla \phi \, (\gamma v_k)^{\frac{q}{\gamma}} \, dx & = {b} \int_{v > k} \frac{|Dv|^p}{v} \, (\gamma v_k)^{\frac{q}{\gamma}} \, \phi \, dx  \\
&+ \int_{\R^n} (\gamma v_k)^{\frac{q}{\gamma}}\phi  \, d \, \sigma.  
\end{aligned}
\end{equation}
Consequently, 
\begin{equation}\label{vweak}
\begin{aligned}
\int_{\R^n} |Dv|^{p-2}Dv \cdot \nabla \phi \, (\gamma v_k)^{\frac{q}{\gamma}} \, dx & = {b}  \gamma ^{\frac{q}{\gamma}} k^{\frac{q(p-1)}{p-1-q}}\int_{v > k} \frac{|Dv|^p}{v} \, \phi \, dx\\  & + \int_{\R^n} (\gamma v_k)^{\frac{q}{\gamma}}\phi  \, d \, \sigma.
\end{aligned}
\end{equation}
Therefore,
\begin{equation} \label{v-super}
\int_{\R^n} |Dv|^{p-2}Dv \cdot \nabla \phi \, (\gamma v_k)^{\frac{q}{\gamma}} \, dx  \ge \int_{\R^n} (\gamma v_k)^{\frac{q}{\gamma}}\phi  \, d \, \sigma.
\end{equation}
Note that $ Du= (\gamma v)^{\frac{q}{p-1-q}} D v,$ so that $ |Du|^{p-1}= (\gamma v)^{\frac{q}{\gamma}} |D v|^{p-1}$, and 
$$ \left \vert  \, |Dv|^{p-2}Dv \cdot \nabla \phi \, (\gamma v_k)^{\frac{q}{\gamma}} \, \right \vert  \le |\nabla \phi| \, |Dv|^{p-1} (\gamma v)^{\frac{q}{\gamma}}  \le ||\nabla \phi||_{L^{\infty}(\R^n)}|D u|^{p-1}.$$
Notice that $|Du|^{p-1} \in L^1_{\text{loc}}(\R^n, dx)$.
Using the Dominated Convergence Theorem, we obtain
\begin{equation*}
\begin{aligned}
\int_{\R^n}|Dv|^{p-2}Dv \cdot \nabla \phi \, (\gamma v_k)^{\frac{q}{\gamma}} \, dx & \to \int_{\R^n} |Dv|^{p-2}Dv \cdot \nabla \phi \, (\gamma v)^{\frac{q}{\gamma}} \, dx \\& = \int_{\R^n} |Du|^{p-2}Du \cdot \nabla \phi \,  dx.
\end{aligned}
\end{equation*}
From this and \eqref{vweak}, we have 
$$ {b}  \gamma ^{\frac{q}{\gamma}} \int_{v > k} \frac{|Dv|^p}{v} \, \phi \, dx \le {k^{-\frac{q(p-1)}{p-1-q}}}C(u,\phi) < \infty.$$
Therefore,
\begin{equation} \label{v-necessary}
||v||_{L^{\frac{q(p-1)}{p-1-q}, \infty}(\phi \frac{|Dv|^p}{v}dx)} < \infty.
\end{equation}
Using \eqref{v-super} and the Monotone Convergence Theorem, we deduce 
$$ \int_{\R^n} |Du|^{p-2}Du \cdot \nabla \phi \,  dx  \ge \int_{R^n} u^q \phi \, d \sigma, \quad \forall \,  \phi \in C_0^{\infty}(\R^n), \quad \phi \ge 0. $$ Moreover, $u$ is $p$-superharmonic on $\R^n$. This means that $u$ is a supersolution of \eqref{eq1} in the (locally) renormalized sense (see \cite{KKT}). By Theorem~\eqref{mainthm}, $u$ satisfies the lower bound in \eqref{two-sided}, and consequently 
$v$ satisfies \eqref{lowerB}.

It remains to prove (iii). Suppose additionally that 
$$\int_{B} v^{\frac{q(p-1)}{p-1-q}} \frac{|Dv|^p}{v} \,  dx < \infty,$$ 
for every ball $B$. 
Then $$\int_{v > k} \frac{|Dv|^p}{v} \, (\gamma v_k)^{\frac{q}{\gamma}} \, \phi \, dx  \to 0 $$ by the Dominated Convergence Theorem.
Letting $k \to \infty$ in \eqref{usub}, we deduce 
$$ \int_{\R^n} |Du|^{p-2}Du \cdot \nabla \phi \,  dx  = \int_{R^n} u^q \phi \, d \sigma, \quad \forall \,  \phi \in C_0^{\infty}(\R^n). $$
Thus,  $u$ is a $p$-superharmonic, and hence a locally renormalized, solution to \eqref{eq1}. This completes the proof of Theorem~\ref{main-riccati}. \end{proof}

As a corollary of Theorem~\ref{main-riccati}, one can characterize the existence of finite energy solutions $v\in L^{1, p}_{0}(\R^n)$ to \eqref{ric-eq}. It is easy to see that such solutions exist if and only if ${b} <1$, and  $\sigma\in L^{-1, p}(\R^n)$, i.e.,  
$\int_{\R^n} \left(\mathbf{W}_{1, p} \sigma\right) \, d \sigma < \infty$.

\end{document}